\renewcommand{\(}{\left(}
\renewcommand{\)}{\right)}
\newcommand{\sideprime}{\sideset{}{'}}
\theoremstyle{plain}
\newtheorem{theo}{Theorem}[section]
\newtheorem{prop}[theo]{Proposition}
\newtheorem{lem}[theo]{Lemma}
\theoremstyle{definition}
\newtheorem{defi}{Definition}
\theoremstyle{remark}
\newtheorem{Rk}{Remark}
\newtheorem{ex}{Example}
\DeclareMathOperator{\re}{Re}
\DeclareMathOperator{\dens}{dens}
\DeclareMathOperator{\Var}{Var}
\newcommand*\diff{\mathop{}\!\mathrm{d}}
\begin{document}

\title{Chebyshev's bias for products of irreducible polynomials}
\author{Lucile Devin}
\email{devin@crm.umontreal.ca}
\address{Centre de recherches math\'ematiques,
	Universit\'e de Montr\'eal,
	Pavillon Andr\'e-Aisenstadt,
	2920 Chemin de la tour,
	Montr\'eal, Qu\'ebec, H3T 1J4, Canada}

\author{Xianchang Meng}
\email{xianchang.meng@uni-goettingen.de}
\address{Mathematisches Institut,
Georg-August Universit\"{a}t G\"{o}ttingen,
Bunsenstra{\ss}e 3-5,
D-37073 G\"{o}ttingen,
Germany}

\keywords{Chebyshev's bias, function fields, product of primes}
\subjclass[2010]{11T55, 11N45, 11K38}
%\date\today

\begin{abstract}
For any $k\geq 1$, this paper studies the number of polynomials having $k$ irreducible factors (counted with or without multiplicities) in $\mathbf{F}_q[t]$ among different arithmetic progressions. 
We obtain asymptotic formulas  for the difference of counting functions uniformly for $k$ in a certain range. In the generic case, the bias dissipates as the degree of the modulus or $k$ gets large, but there are cases when the bias is extreme. In contrast to the case of products of $k$ prime numbers, we show the existence of complete biases in the function field setting, that is the difference function may have constant sign. Several examples illustrate this new phenomenon.

\end{abstract}

\maketitle

\section{Introduction}

\subsection{Background}

The notion of Chebyshev's bias originally refers to the observation in \cite{ChebLetter} that
there seems to be more primes congruent to $3 \bmod 4$ than to $1 \bmod 4$ in initial intervals of the integers.
More generally it is interesting to study the function $ \pi(x; q,a) -\pi(x;q, b)$
where $\pi(x; q,a)$ is the number of primes~$\leq x$ that are congruent to $a \bmod q$.
Under the Generalized Riemann Hypothesis (GRH) and the Linear Independence (LI) conjecture for zeros of the Dirichlet $L$-functions, 
Rubinstein and Sarnak \cite{RS} gave a framework to study Chebyshev's bias quantitatively.
Precisely they showed that the logarithmic density $\delta(q;a,b)$ of the set of $x\geq2$ for which $\pi(x; q,a)>\pi(x;q, b)$ exists and in particular $\delta(4;3,1)\approx 0.9959$.
Many related questions have been asked and answered since then, we refer to the expository articles of Ford and Konyagin \cite{FordKonyagin_expository} and of Granville and Martin \cite{GranvilleMartin} for detailed reviews of the subject.

In this article we consider products of $k$ irreducible polynomials among different congruence classes. Our results are uniform for $k$ in a certain range, and we show that in some cases the \emph{bias} (see Definition~\ref{Defn-bias}) in the distribution can approach any value between $0$ and $1$.
The idea of this paper is motivated by two different generalizations of Chebyshev's bias.

On one hand, Ford and Sneed \cite{FordSneed2010} adapted the observation of Chebyshev's bias to quasi-prime numbers, i.e. numbers with two prime factors $p_1 p_2$ ($p_1=p_2$ included). They showed  under GRH and LI that the direction of the bias for products of two primes is opposite to the bias among primes, and that the bias decreases. Similar results are developed in \cite{DummitGranvilleKisilevsky,Moree2004}.
Recently, under GRH and LI, the second author \cite{Meng2017, Meng2017L} generalized the results of \cite{RS}, \cite{FordSneed2010} and \cite{DummitGranvilleKisilevsky} to products of any $k$ primes among different arithmetic progressions, and observed that the bias changes direction according to the parity  of $k$.

On the other hand, using the analogy between the ring of integers and polynomial rings over finite fields, Cha \cite{Cha2008} adapted the results of \cite{RS} to irreducible polynomials over finite fields. Cha discovered a surprising phenomenon: in the case of polynomial rings there are biases in unexpected directions. Further generalizations have been studied since then in \cite{ChaKim,ChaIm,CFJ,Perret-Gentil}.

Fix a finite field $\mathbf{F}_{q}$ and a polynomial $M \in \mathbf{F}_{q}[T]$ of degree $d \geq 1$,  we study the distribution in congruence classes modulo $M$ of monic polynomials with $k$ irreducible factors.
More precisely let  $A, B\subset (\mathbf{F}_{q}[t]/(M))^{*}$ be subsets of invertible classes modulo $M$, 
for any $k\geq 1$, and any $X \geq 1$
we define the normalized\footnote{Using Sathe--Selberg method, Afshar and Porritt \cite[Th. 2]{AfsharPorritt} gave an asymptotic formula for the number of monic polynomials of degree $X$ with $k$ irreducible factors in congruence classes modulo $M\in \mathbf{F}_{q}[t]$ the main term of which is $\frac{q^{X}(\log X)^{k-1}}{X (k-1)! \phi(M)}$ in the case $k = o(\log X)$ and the modulus $M$ does not vary with $X$. In this paper, we focus on the error terms and expect to have square-root cancellation in the error terms.} difference function
\begin{multline*}
\Delta_{f_k}(X; M, A, B) \\ :=  \frac{X (k-1)!}{q^{X/2}(\log X)^{k-1}} \Big(\frac{1}{\lvert A\rvert}\lvert \lbrace N \in \mathbf{F}_{q}[t]:  N \text{ monic, } \deg{N} \leq X,~ f(N)=k, N \bmod M \in A \rbrace\rvert \\
   - \frac{1}{\lvert B\rvert}\lvert \lbrace N \in \mathbf{F}_{q}[t]:  N \text{ monic, } \deg{N} \leq X,~ f(N)=k, N \bmod M \in B \rbrace\rvert \Big) \\
\end{multline*}
where $f=\Omega$ or $\omega$ is the number of prime factors counted with or without multiplicities. 
We study the distribution of the values of the function $\Delta_{f_k}(X; M, A, B)$, in particular we are interested in the bias of this function towards positive values.
\begin{defi}[\emph{Bias}]\label{Defn-bias}
Let $F:\mathbf{N}\rightarrow\mathbf{R}$ be a real function, we define the \emph{bias} of $F$ towards positive values as the natural density (if it exists) of the set of integers having positive image by $F$:
\begin{align*}
\dens(F >0) = \lim_{X\rightarrow\infty}\frac{ \lvert\lbrace n \leq X : F(n)>0 \rbrace\rvert  }{X}.
\end{align*}
If the limit does not exist, we say that the bias is not well defined.
\end{defi}

\subsection{Values of the bias}
In this section we present our main result which is the consequence of the asymptotic formula obtained in Theorem~\ref{Th_Difference_k_general_deg<X}.

Given a field~$\mathbf{F}_{q}$ with odd characteristic, and a square-free polynomial  $M$  in $\mathbf{F}_{q}$, we examine more carefully the case of races between quadratic residues ($A = \square$) and non-quadratic residues ($B =\boxtimes$) modulo $M$.

We say that $M$ satisfies (LI\ding{70}) if the multi-set $$\lbrace\pi\rbrace\cup\bigcup\limits_{\substack{\chi \bmod M \\ \chi^{2} = \chi_0 , \chi \neq \chi_0}}\lbrace \gamma \in [0,\pi] :  L(\frac{1}{2} + i\gamma, \chi) = 0  \rbrace$$ is linearly independent over $\mathbf{Q}$ (see Section~\ref{subsec_Lfunctions} for the definition of the $L$-functions).

We study the variation of the values of the bias when the degree of the modulus $M$ gets large.
 In particular, we show that the values of the bias are dense in $[\tfrac{1}{2},1]$.

\begin{theo}\label{Th central limit for M under LI}
Let $\mathbf{F}_{q}$ be a finite field of odd characteristic and $k$ a positive integer.
Suppose that for every $d,r$ large enough, there exist a monic polynomial $M_{d,r} \in \mathbf{F}_q[t]$ with 
\begin{enumerate}
    \item $\deg M_{d,r} =d$,
    \item  $\omega(M_{d,r}) = r$,
    \item $M_{d,r}$ satisfies \emph{(LI\ding{70})}.
\end{enumerate}
Then for $f=\Omega$ or $\omega$,
$$ \overline{\lbrace \dens ((\epsilon_f)^k \Delta_{f_k}(\cdot;M_{d,r},\square,\boxtimes) > 0) : d \geq1, r\geq1  \rbrace} = [\tfrac12,1], $$
 where $\epsilon_{\Omega} = -1$ and $\epsilon_{\omega} = 1$.
\end{theo}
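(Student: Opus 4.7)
My plan is to reduce the statement to a central-limit analysis of the limiting distribution of $(\epsilon_f)^k\Delta_{f_k}(\cdot;M_{d,r},\square,\boxtimes)$ under \emph{(LI\ding{70})}, and then to tune the parameters $d,r$ so as to exhaust the interval $[\tfrac12,1]$.

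First, I would feed $M_{d,r}$ into Theorem~\ref{Th_Difference_k_general_deg<X} to express
\[
(\epsilon_f)^k\Delta_{f_k}(X;M_{d,r},\square,\boxtimes) = \mu_{d,r,k}+\sum_{\chi}\sum_{\gamma_\chi}w_{k}(\chi,\gamma_\chi)\,e^{i\gamma_\chi X\log q}+o(1),
\]
where the outer sum runs over the non-principal quadratic characters $\chi\bmod M_{d,r}$ (only those survive the orthogonality distinguishing $\square$ from $\boxtimes$), the inner sum runs over the zeros $\tfrac12+i\gamma_\chi\in[0,\pi]$ of $L(s,\chi)$, and $w_k(\chi,\gamma_\chi)$ are explicit weights. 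Under \emph{(LI\ding{70})}, Kronecker--Weyl equidistribution implies that along integers $X$ the phases $e^{i\gamma_\chi X\log q}$ behave as asymptotically independent uniform variables on the unit circle, so the limiting distribution has the form
\[
Y_{d,r,k}=\mu_{d,r,k}+\sum_{j}c_{j,k}\,Z_{j},
\]
with $Z_j$ independent mean-zero random variables uniformly bounded in absolute value, and $\mu_{d,r,k}>0$ a deterministic bias term.

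Second, I would estimate the mean and variance. Since $M_{d,r}$ is square-free with $r$ distinct prime factors in odd characteristic, there are exactly $2^{r}-1$ non-principal quadratic characters modulo $M_{d,r}$, each contributing a positive amount to $\mu_{d,r,k}$, giving $\mu_{d,r,k}\asymp_{k} 2^{r}$. Each such $L(s,\chi)$ has $d-1$ zeros on the critical line, so $\sigma_{d,r,k}^{2}:=\Var(Y_{d,r,k})\asymp_{k}(d-1)\cdot 2^{r}$, whence $\rho(d,r):=\mu_{d,r,k}/\sigma_{d,r,k}\asymp_{k}\sqrt{2^{r}/(d-1)}$. Since $Y_{d,r,k}-\mu_{d,r,k}$ is a sum of $\asymp d\cdot 2^{r}$ independent uniformly bounded summands, a Lindeberg--Berry--Esseen bound gives, as $d,r\to\infty$,
\[
\dens\bigl((\epsilon_f)^k\Delta_{f_k}(\cdot;M_{d,r},\square,\boxtimes)>0\bigr)=\Phi\bigl(\rho(d,r)\bigr)+o(1),
\]
where $\Phi$ is the standard normal distribution function. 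Since $\rho(d,r)$ can be made to approach any prescribed value in $[0,\infty]$ (take $r$ bounded and $d\to\infty$ for the limit $0$, take $r\to\infty$ with $d\asymp r$ for the limit $\infty$, intermediate values by setting $d\asymp 2^{r}/\rho^{2}$), and $\Phi$ is a continuous bijection from $[0,\infty]$ onto $[\tfrac12,1]$, the set of biases is dense in $[\tfrac12,1]$; taking closure includes the endpoints.

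The main obstacle is to verify the Lindeberg condition uniformly in $(d,r)$, i.e.\ to show that no single summand $c_{j,k}Z_j$ dominates the remaining ones. This requires controlling the low-lying zeros of $L(s,\chi)$ for all non-principal quadratic characters modulo $M_{d,r}$, so that the weights $w_k(\chi,\gamma_\chi)$ do not concentrate at a single zero, together with a precise computation of the leading constants in $\mu_{d,r,k}$ and $\sigma_{d,r,k}^{2}$ produced by the Sathe--Selberg expansion at $k$ prime factors in Theorem~\ref{Th_Difference_k_general_deg<X}.
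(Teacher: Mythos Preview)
Your overall strategy---compute mean and variance of the limiting distribution under (LI\ding{70}), apply a Berry--Esseen bound, and tune $(d,r)$ so that the ratio $\mu/\sigma\asymp\sqrt{2^r/d}$ sweeps $[0,\infty]$---is exactly the paper's approach (Section~\ref{subsec M limit under LI}, Propositions~\ref{Prop Chebyshev many factors} and~\ref{Prop M limit using Berry Esseen} together with Lemma~\ref{Lem any possible value}). Two points need correction.

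First, and most importantly, your claimed limit $\Phi(\rho)$ is wrong when $\rho$ is bounded away from~$0$, and the reason is precisely the Lindeberg failure you flag, though not for the cause you name. Under (LI\ding{70}) the zeros are all simple and the non-real weights satisfy $\bigl|\tfrac{\alpha_j}{\alpha_j-1}\bigr|\le\tfrac{\sqrt q}{\sqrt q-1}$ uniformly, so low-lying zeros are harmless. The offending term is the $(-1)^X$ contribution in Theorem~\ref{Th_Difference_k_general_deg<X}: its coefficient is $\sum_{\chi}(m_-(\chi)-\epsilon_f\tfrac12)^k\tfrac{\sqrt q}{\sqrt q+1}=(2^r-1)\,2^{-k}\tfrac{\sqrt q}{\sqrt q+1}$, which is exactly $\tfrac{\sqrt q-1}{\sqrt q+1}$ times the mean. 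When $\rho=\mu/\sigma$ stays bounded, this single Bernoulli summand carries a fixed positive fraction of the total variance and destroys the Gaussian limit. The paper's fix (proof of Proposition~\ref{Prop M limit using Berry Esseen}) is to factor the limiting measure as $D_{M,f_k}*\nu_{M,k}$, where $D_{M,f_k}$ is the two-point measure coming from $\pi$ and the mean, and to apply Berry--Esseen only to $\nu_{M,k}$. The resulting limit is $\tfrac12\bigl(\delta_{2b/(\sqrt q+1)}+\delta_{2b\sqrt q/(\sqrt q+1)}\bigr)*\mathcal N$ with $b=\lim B(M)$, and one checks that $b\mapsto\tfrac12\bigl(\Phi(2b/(\sqrt q+1))+\Phi(2b\sqrt q/(\sqrt q+1))\bigr)$ is still a continuous bijection $[0,\infty)\to[\tfrac12,1)$. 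Once you isolate the $(-1)^X$ term, your argument goes through.

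Second, two minor inaccuracies: the phases are $e^{iX\gamma_j}$, not $e^{i\gamma_\chi X\log q}$ (here $X$ is an integer degree); and your mean and variance are both off by a factor $\lvert\boxtimes\rvert^{-1}$ and $\lvert\boxtimes\rvert^{-2}$ respectively, though these cancel in the ratio $\rho$. Also, the number of critical zeros of $L(s,\chi)$ is $\deg M^*(\chi)-1$ (with $M^*(\chi)$ the conductor), not $d-1$; the paper sums this over all $2^r-1$ quadratic characters to get $I(M)\sim\tfrac{q}{q-1}(2^r-1)\tfrac{d-4}{2}$ (equation~\eqref{bound IM}), which agrees with your $\sigma^2\asymp 2^r d$ at the level of order of magnitude.
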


\begin{Rk}
Note that, when $k$ is odd, we obtain that the possible values of $\dens ( \Delta_{\Omega_k}(\cdot;M,\square,\boxtimes) > 0)$ are dense in $[0,\tfrac{1}{2}]$, when $M$ varies in $\mathbf{F}_q[t]$, while the function $\Delta_{\omega_k}(\cdot;M,\square,\boxtimes) $ is biased in the direction of quadratic residues independently of the parity of $k$.
\end{Rk}

From \cite[Prop.~1.1]{Kowalski2010}, we expect the hypothesis (LI\ding{70}) to be true for most of the monic square-free polynomials~$M \in \mathbf{F}_{q}[t]$ when $q$ is large enough. When $d, r$ are large, the set of polynomials of degree $d$ having $r$ irreducible factors should be big enough to contain at least one polynomial satisfying (LI\ding{70}).
However, in Proposition~\ref{Prop Chebyshev many factors}, similarly to \cite[Th.~1.2]{Fiorilli_HighlyBiased}, we only need an hypothesis on the multiplicity of the zeros to prove the existence of extreme biases.

In \cite[Th.~6.2]{Cha2008}, Cha considered the case $k=1$ and showed that the values of the bias $ \dens (\Delta_{\Omega_1}(\cdot;M_{d,r},\square,\boxtimes) > 0)$ approach $\tfrac{1}{2}$ when $M$ varies among irreducible polynomials of increasing degree.
In the case $k=1$, Fiorilli \cite[Th.~1.1]{Fiorilli_HighlyBiased} proved that the values of the biases in prime number races between non-quadratic and quadratic residues are dense in $[\tfrac{1}{2},1]$.
We also show in Proposition~\ref{Prop using Honda Tate} that the values $\tfrac12$ and $1$ can be obtained as values of $\dens ((-1)^k \Delta_{f_k}(\cdot;M,\square,\boxtimes) > 0)$, when $q>3$. These values are obtained for polynomials $M$ not satisfying (LI\ding{70}).

In the case $q =5$, Cha showed that there exists $M \in \mathbf{F}_5[t]$ with $\dens ( \Delta_{\Omega_{1}}(\cdot;M,\square,\boxtimes) > 0) = 0.6$, uncovering a bias in ``the wrong direction'', we wonder if such a phenomenon occurs for any $q$ and $k$.

\subsection{Asymptotic formulas}\label{subsec_Lfunctions}

Before stating the asymptotic formulas, let us set some notations.
For $M \in \mathbf{F}_{q}[t]$ we denote $\phi(M) = \lvert \left( \mathbf{F}_{q}[t]/ (M) \right)^* \rvert$ the number of invertible congruence classes modulo~$M$.

Recall that we define the Dirichlet $L$-function associated to a Dirichlet character $\chi$ by 
$$L(s, \chi) = \sum_{a \text{ monic }} \frac{\chi(a)}{\lvert a \rvert^{s}}$$
where $\lvert a \rvert = q^{\deg a}$.
It can also be written as an Euler product over the irreducible polynomials:
$$L(s, \chi) = \prod_{P \text{ irreducible }}\left( 1- \frac{\chi(P)}{\lvert P \rvert^s}\right)^{-1}.$$

Recall that (e.g. \cite[Prop.~4.3]{Rosen2002}), for $\chi \neq \chi_0$ a Dirichlet character modulo~$M \in\mathbf{F}_{q}[t]$, the Dirichlet~$L$-function  $L(\chi,s)$ is a polynomial in $q^{-s}=u$ of degree at most $\deg M-1$. Thanks to the deep work of Weil \cite{Weil_RH}, we know that the analogue of the Riemann Hypothesis is satisfied.

In the following we denote $\alpha_{j}(\chi)= \sqrt{q}e^{i\gamma_{j}(\chi)}$, $\gamma_{j}(\chi) \in (-\pi, \pi)\setminus \lbrace 0 \rbrace$, the distinct non-real inverse zeros of $\mathcal{L}(u,\chi) = \mathcal{L}(q^{-s},\chi) = L(s,\chi)$ of norm $\sqrt{q}$, with multiplicity $m_j(\chi)$.
The real inverse zeros will play an important role; we denote $m_{\pm}(\chi)$ the multiplicity of $\pm \sqrt{q}$ as an inverse zero of $\mathcal{L}(u,\chi)$, and $d_{\chi}$ the number of distinct non-real inverse zeros or norm $\sqrt{q}$. 
We summarize the notations in the following formula: 
\begin{equation}\label{Not_L-function}
\mathcal{L}(u,\chi) = (1-u \sqrt{q})^{m_+(\chi)}(1+u \sqrt{q})^{m_-(\chi)} \prod_{j=1}^{d_{\chi}} (1 - u \alpha_{j}(\chi) )^{m_{j}(\chi)}\prod_{j'=1}^{d'_{\chi}} (1- u\beta_{j'}(\chi))
\end{equation}
where $\lvert \beta_{j'}(\chi) \rvert =1$.
Recently Wanlin Li proved \cite[Th.~1.2]{Li2018} that $m_{+}(\chi) >0$ for some primitive quadratic character $\chi$ over $\mathbf{F}_{q}[t]$ for any odd $q$. This result disproves the analogue of a conjecture of Chowla about the existence of central zeros.
We present some of such examples in Section~\ref{subsec_examples_realZero} to exhibit large biases. As $k$ increases, we observe a new phenomenon: such characters can induce complete biases in races between quadratic and non-quadratic residues (see Section \ref{subsec_examples_realZero}), those biases do not dissipate as $k$ gets large (see Proposition~\ref{Prop k limit}). 

Denote
$$\gamma(M) = \min\limits_{\chi \bmod M}\min\limits_{1\leq i\neq j\leq d_{\chi}}(\lbrace \lvert \gamma_i(\chi) - \gamma_j(\chi) \rvert, \lvert \gamma_{i}(\chi) \rvert, \lvert \pi -\gamma_{i}(\chi) \rvert \rbrace).$$

We have the following asymptotic formulas without any conditions uniformly for $k$ in some reasonable range (for example,  $k\leq \frac{0.99\log\log X}{\log d}$).
\begin{theo}\label{Th_Difference_k_general_deg<X}
Let $M \in \mathbf{F}_{q}[t]$ be a non-constant polynomial of degree $d$, 
and let  $A, B \subset (\mathbf{F}_{q}[t]/(M))^*$ be two sets of invertible classes modulo $M$. 
For any integer $k \geq1 $, 
with $k=o((\log X)^{\frac12})$, one has
    \begin{multline}\label{Form_deg=n}
   \Delta_{\Omega_k}(X; M, A,B)  \\
    = (-1)^k \Bigg\{ \sum_{\chi \bmod M}c(\chi,A,B)\bigg( \(m_+(\chi) +\tfrac{\delta(\chi^2)}{2}\)^k \frac{\sqrt{q}}{\sqrt{q}-1} +\(m_-(\chi)+\tfrac{\delta(\chi^2)}{2}\)^k \frac{\sqrt{q}}{\sqrt{q} + 1}(-1)^{X} \\
    +\sum_{\gamma_j(\chi)\neq 0, \pi} m_j(\chi)^k \frac{\alpha_j(\chi)}{\alpha_j(\chi) -1} e^{iX\gamma_{j}(\chi)} \bigg)  + O\bigg(\frac{d^k k(k-1)}{\gamma(M)\log X} + dq^{-X/6}\bigg) \Bigg\},
	\end{multline}
	and if $q\geq 5$,
	   \begin{multline}\label{Form_deg=n_littleomega}
   \Delta_{\omega_k}(X; M, A,B)  \\
    = (-1)^k \Bigg\{ \sum_{\chi \bmod M}c(\chi,A,B)\bigg( \(m_+(\chi) -\tfrac{\delta(\chi^2)}{2}\)^k \frac{\sqrt{q}}{\sqrt{q}-1} +\(m_-(\chi)-\tfrac{\delta(\chi^2)}{2}\)^k \frac{\sqrt{q}}{\sqrt{q} + 1}(-1)^{X} \\
    +\sum_{\gamma_j(\chi)\neq 0, \pi} m_j(\chi)^k \frac{\alpha_j(\chi)}{\alpha_j(\chi) -1} e^{iX\gamma_{j}(\chi)} \bigg)  + O\bigg(\frac{d^k k(k-1)}{\gamma(M)\log X} + dq^{-X/6}\bigg) \Bigg\},
	\end{multline}
	where the implicit constants are absolute, $\delta(\chi^2) = 1$ if $\chi$ is real and $0$ otherwise, 
	and 
    $$c(\chi,A,B) = 
    \frac{1}{\phi(M)}\bigg( \frac{1}{\lvert A\rvert}\sum_{a\in A}\bar\chi(a) - 
    \frac{1}{\lvert B\rvert}\sum_{b\in B}\bar\chi(b) \bigg).$$
 \end{theo}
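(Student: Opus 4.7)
The plan is a function-field version of the Selberg--Delange method, combining character orthogonality with Cauchy-integral extraction for the coefficients of generating series whose singularities are controlled by $\mathcal{L}(u,\chi)$.

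First, orthogonality of Dirichlet characters modulo $M$ reduces the problem to a character sum: the bracketed expression in the definition of $\Delta_{f_k}$ equals $\sum_{\chi\bmod M}c(\chi,A,B)\,S_{f_k}(X,\chi)$, with $S_{f_k}(X,\chi)=\sum_{n\leq X}[u^n z^k]F_f(u,z,\chi)$ and
$$F_\Omega(u,z,\chi)=\prod_P\frac{1}{1-z\chi(P)u^{\deg P}},\qquad F_\omega(u,z,\chi)=\mathcal{L}(u,\chi)\prod_P\bigl(1+(z-1)\chi(P)u^{\deg P}\bigr).$$

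The central identity is the factorisation
$$F_f(u,z,\chi)=\mathcal{L}(u,\chi)^z\,\mathcal{L}(u^2,\chi^2)^{\eta_f(z^2-z)/2}\,H_f(u,z,\chi),\qquad \eta_\Omega=+1,\ \eta_\omega=-1,$$
with $H_f(u,z,\chi)$ analytic in $u$ on $|u|<q^{-1/3}$, uniformly for $|z|$ bounded. This is verified by matching $\log F_f$ against $z\log\mathcal{L}(u,\chi)+\tfrac{\eta_f(z^2-z)}{2}\log\mathcal{L}(u^2,\chi^2)$ via the power sums $P_m(u,\chi)=\sum_P\chi(P)^m u^{m\deg P}$: the $P_1$ contribution vanishes automatically, the $P_2$ contribution is absorbed by the $\mathcal{L}(u^2,\chi^2)$ factor with the sign $\eta_f$ determined by the $F_\Omega$ vs.\ $F_\omega$ expansions, and what remains involves only $P_m$ with $m\geq 3$, which converges on $|u|<q^{-1/3}$. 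To compute the partial sum $\sum_{n\leq X}[u^n z^k]F_f=[u^X z^k]\bigl(F_f/(1-u)\bigr)$ I apply a double Cauchy integral and deform the $u$-contour outward from $|u|=\epsilon$ to $|u|=q^{-1/3}$, collecting Hankel-type contributions at the branch singularities on $|u|=q^{-1/2}$. Near an inverse zero $1/\alpha_j(\chi)$ of multiplicity $m_j(\chi)$, the local expansion $\mathcal{L}(u,\chi)^z\sim(1-\alpha_j(\chi)u)^{m_j(\chi) z}$ combined with $[z^k]\mathcal{L}(u,\chi)^z\sim(m_j(\chi)\log(1-\alpha_j(\chi)u))^k/k!$ and the classical Stirling-number identity $[v^n](-\log(1-v))^k\sim k(\log n)^{k-1}/n$ produces the term $(-1)^k m_j(\chi)^k\alpha_j^n(\log n)^{k-1}/[n(k-1)!]$, while the regular factor $1/(1-u)$ evaluated at $u=1/\alpha_j(\chi)$ supplies the ratio $\alpha_j(\chi)/(\alpha_j(\chi)-1)$. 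At the real inverse zeros $\pm\sqrt{q}$ the factor $\mathcal{L}(u^2,\chi^2)^{\eta_f(z^2-z)/2}$ is singular only when $\chi^2=\chi_0$, and then shifts the effective linear-in-$z$ exponent of $(1\mp\sqrt{q}u)$ by $\eta_f\delta(\chi^2)/2$, producing the $(m_\pm(\chi)+\eta_f\delta(\chi^2)/2)^k$ coefficients. Assembling these contributions and multiplying by the prescribed normalisation $X(k-1)!/(q^{X/2}(\log X)^{k-1})$ reproduces the stated formulae.

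The main obstacle is the uniformity of the error in $k$ up to $k=o(\sqrt{\log X})$. The $dq^{-X/6}$ term comes from estimating the integral on the deformed contour $|u|=q^{-1/3}$, using the analyticity of $H_f$ and polynomial-in-$\deg M$ bounds on $\mathcal{L}(u,\chi)$, $\mathcal{L}(u^2,\chi^2)$ there. The factor $d^k$ bounds the total contribution from all inverse zeros via $\sum_j m_j(\chi)^k+m_+(\chi)^k+m_-(\chi)^k\leq\bigl(\sum_j m_j(\chi)+m_+(\chi)+m_-(\chi)\bigr)^k\leq d^k$. The ratio $k(k-1)/\log X$ captures the next quadratic-in-$z$ Taylor correction in $\mathcal{L}(u^2,\chi^2)^{\eta_f(z^2-z)/2}H_f(u,z,\chi)$ relative to the leading $(\log\mathcal{L})^k/k!$; at the optimal contour distance $|\log(1-\alpha_j u)|\asymp\log X$, giving relative size $O(k^2/\log X)$. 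Finally, $\gamma(M)^{-1}$ arises because each inverse zero must be isolated from its neighbours by a Hankel loop whose geometry degrades as the spacing between zeros shrinks. The restriction $q\geq 5$ in the $\omega$-case is a technical condition needed to control the extra Euler product $\prod_P(1+(z-1)\chi(P)u^{\deg P})$ on a $z$-contour slightly enlarged beyond the unit circle for Cauchy extraction in $z$.
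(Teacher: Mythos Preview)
Your approach is genuinely different from the paper's. The paper does \emph{not} use Selberg--Delange or Hankel contours; instead it establishes the fixed-degree asymptotic for $\pi_{f_k}(n,\chi)$ by induction on $k$ via Newton's identities relating $h_k,e_k$ to power sums (Lemma~\ref{lem-Newton}), obtaining the recursion
\[
\pi_{f_k}(n,\chi)=\frac{1}{k}\sum_{n_1+n_2=n}\pi_{f_{k-1}}(n_1,\chi)\pi_{f_1}(n_2,\chi)+(\text{lower order}),
\]
and then handles the convolution by splitting into diagonal and off-diagonal terms over the inverse zeros (Lemmas~\ref{Lem_general_k_Zeros}--\ref{Lem-mixed-term}). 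The summation over $n\leq X$ is done afterwards by an elementary Abel-summation lemma (Lemma~\ref{Lem_SumOverN}). The $\gamma(M)^{-1}$ arises concretely from bounding the off-diagonal exponential sums $\sum_{n_1}(\alpha_{j_1}/\alpha_{j_2})^{n_1}(\log n_1)^{k-2}/n_1$ via partial summation, and the $q\geq 5$ restriction for $\omega$ comes from the explicit series $\sum_{j\geq\ell}\frac{j!}{(j-\ell)!}q^{-j/2}=\frac{\sqrt{q}}{\sqrt{q}-1}\ell!(\sqrt{q}-1)^{-\ell}$, which is bounded uniformly in $\ell$ only when $\sqrt{q}-1>1$.

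Your factorisation $F_f=\mathcal{L}(u,\chi)^z\mathcal{L}(u^2,\chi^2)^{\eta_f(z^2-z)/2}H_f$ is correct and your identification of the singularity structure is sound; this route (essentially the one the authors attribute to Porritt in Remark~\ref{Rk_on_Th_deg<n}\ref{Item_Range_k}) has the advantage of making the origin of the $\pm\delta(\chi^2)/2$ shift transparent and may ultimately allow a wider range of $k$. The paper's inductive method is more elementary and makes the constants $C_{f,k}\leq Ck(k-1)$ explicit through the recursion. Where your proposal is thin is in the uniform-in-$k$ error analysis: your sentence about the ``quadratic-in-$z$ Taylor correction'' giving $O(k^2/\log X)$ is a heuristic, and the Hankel-loop justification of $\gamma(M)^{-1}$ would need a genuine saddle-point or stationary-phase argument to make the constant absolute. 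Your explanation of the $q\geq 5$ condition (enlarging the $z$-contour) is plausible but different from the paper's concrete reason, and you would need to verify that the series controlling $H_\omega$ on $|u|=q^{-1/3}$ really does require $q\geq 5$ for uniformity in $z$.
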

    
     This theorem follows from the asymptotic formula obtained in Section~\ref{Sec_proof_deg<X}.
     The method we use here is not a straightforward generalization of the method used in \cite{Cha2008} since the analogue of the weighted form of counting function is not ready to detect products of irreducible elements (see \cite{Meng2017}). Different from the results in \cite{RS}, \cite{FordSneed2010} and \cite{Meng2017}, we obtain asymptotic formulas for the corresponding difference functions unconditionally, and the density we derive in this paper is the natural density rather than the logarithmic density. Our starting point is motivated by a combinatorial idea in \cite{Meng2017}, but the main proof is not a parallel translation since the desired counting function is not derived as Meng did in \cite{Meng2017} using Perron's formula. 

\begin{Rk}
\label{Rk_on_Th_deg<n}
\begin{enumerate}[label = \roman*)]
 
 \item\label{Item_IntegerCase} Note that in the case $\Delta_{\Omega_1}$ this result is \cite[Th.~2.5]{Cha2008}. Our formulas are more general analogues of the result in \cite{Meng2017} including the multiplicities of the zeros.
 
 \item\label{Item_CompareOmegas} As $\lvert m_+(\chi) -\frac{\delta(\chi^2)}{2} \rvert \leq \lvert m_+(\chi) +\frac{\delta(\chi^2)}{2} \rvert  $, we expect a more important bias in the race between polynomials with $\Omega(N) = k$ than in the race between polynomials with $\omega(N) = k$. Note also that if $m_{+}(\chi) =0$ for all $\chi$, the two mean values have different sign when $k$ is odd, hence we expect the two biases to be in different directions when $k$ is odd. 
 
\item\label{Item_largestMult} We observe from the formula that the inverse zeros with largest multiplicity will determine the behavior of the function as $k$ grows. This is the point of Proposition~\ref{Prop k limit} below. Moreover the real zeros play an important role in determining the bias.

\item\label{Item_Range_k} For degree $X$ polynomials, the typical number of irreducible factors is $\log X$. Hence, one may expect an asymptotic formula which holds for $k\ll \log X$, or at least for $k=o(\log X)$. However, we are not able to reach this range in general, and the factor $d^k$ in the error term is inevitable in our proof. Through personal communication, we know that Sam Porritt  is currently using a different method to study these asymptotic formulas \cite{Porritt}.

\end{enumerate}
\end{Rk}

 In the case of the race of quadratic residues against non-quadratic residues modulo $M$, the expressions in \eqref{Form_deg=n} and \eqref{Form_deg=n_littleomega} can be simplified. 
 This is studied in more detail in Section~\ref{Sec_Examples}.
For the race between polynomials with $\Omega(N) = k$, we expect a bias in the direction of quadratic residues or non-quadratic residues according to the parity of $k$.
We show that the existence of the real zero  $\sqrt{q}$ sometimes leads to extreme biases.

In the generic case, we expect $m_{\pm} = 0$ and that the other zeros are simple,
then the asymptotic formulas in Theorem~\ref{Th_Difference_k_general_deg<X} give a connection between $\Delta_{f_k}(X; M, A,B)$ and $\Delta_{f_1}(X; M, A,B)$ ($f=\Omega$ or $\omega$), similar to the case of products of primes \cite[Cor.~1.1, Cor.~2.1]{Meng2017}. 
We expect in this case that the polynomials
with $\Omega(N) = k$ have preference for quadratic non-residue classes when $k$ is odd; and when $k$ is even, such polynomials
have preference for quadratic residue classes. However, the polynomials with $\omega(N) = k$ always have preference for quadratic residue
classes. Moreover, as $k$ increases, the biases become smaller and smaller for both of the two
cases. This observation is justified by Proposition~\ref{Prop k limit}(Expected generic case).

\subsection{Further behaviour of the bias}

The asymptotic formula from Theorem~\ref{Th_Difference_k_general_deg<X} helps the understanding of the bias in the distribution of polynomials with a certain number of irreducible factors in congruence classes.

 In the case of a sequence of polynomials with few irreducible factors, we give a precise rate of convergence of the bias to $\tfrac{1}{2}$ in Theorem~\ref{Th central limit for M under LI} in the following result.

\begin{theo}\label{Th_CentralLimit}
Let $\lbrace M \rbrace$ be a sequence of square-free polynomials in $\mathbf{F}_{q}[t]$ satisfying \emph{(LI\ding{70})} and such that $\frac{2^{\omega(M)}}{\deg M} \rightarrow 0$.
Then, for $f = \Omega$ or $\omega$, as $\deg M \rightarrow\infty$,
the limiting distribution $\mu_{M,f_k}^{\mathrm{norm}}$ of 
\begin{align*}
    &\Delta_{f_k}^{\mathrm{norm}}(X;M) := \frac{\lvert \boxtimes\rvert \sqrt{q-1} }{\sqrt{q2^{\omega(M)-1}\deg M}}\Delta_{f_k}(X;M,\square,\boxtimes)
\end{align*}
exists and  converges weakly to the standard Gaussian distribution.
More precisely, one has
\begin{equation*}
    \sup_{x\in\mathbf{R}}\left\lvert \int_{-\infty}^{x}\diff\mu_{M,f_k}^{\mathrm{norm}} - \frac{1}{\sqrt{2\pi}}\int_{-\infty}^{x} e^{-t^2/2} \diff t \right\rvert \ll  \frac{\sqrt{2^{\omega(M)}}}{2^k\sqrt{\deg M}} + \frac{\log \deg M}{\deg M}.
\end{equation*}
In particular the bias dissipates as $\deg M$ gets large.
\end{theo}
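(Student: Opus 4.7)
The plan is to derive the theorem from Theorem~\ref{Th_Difference_k_general_deg<X} together with a quantitative Berry--Esseen argument on a torus. First I would observe that, since $M$ is square-free and $q$ is odd, $\square$ is the index-two subgroup of squares in $(\mathbf{F}_{q}[t]/M)^{*}$, and a short character computation gives $c(\chi,\square,\boxtimes)=0$ unless $\chi^{2}=\chi_{0}$ and $\chi\neq\chi_{0}$, in which case $c(\chi,\square,\boxtimes)=2/\phi(M)$. Under (LI$\bigstar$) every real non-trivial $\chi$ satisfies $m_{+}(\chi)=m_{-}(\chi)=0$, all its non-real critical-line zeros are simple, and its positive angles $\gamma_{j}(\chi)$ are $\mathbf{Q}$-linearly independent from each other (across $\chi$) and from $\pi$. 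Feeding this into \eqref{Form_deg=n}--\eqref{Form_deg=n_littleomega} reduces the formula to
\begin{equation*}
\Delta_{f_{k}}(X;M,\square,\boxtimes)=(-1)^{k}D_{k,f}(X,M)+(-1)^{k}S(X,M)+(\text{negligible error}),
\end{equation*}
where $D_{k,f}(X,M)$ is a deterministic piece of size $O(2^{\omega(M)}/(2^{k}\phi(M)))$ coming from the $(1/2)^{k}$ parts attached to $\delta(\chi^{2})$, and $S(X,M)=\frac{2}{\phi(M)}\sum_{\chi,j}\frac{\alpha_{j}(\chi)}{\alpha_{j}(\chi)-1}e^{iX\gamma_{j}(\chi)}$.

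Next I would apply Weyl equidistribution. Multiplying by $F(M):=|\boxtimes|\sqrt{q-1}/\sqrt{q\cdot 2^{\omega(M)-1}\deg M}$ the deterministic piece contributes exactly $O(\sqrt{2^{\omega(M)}}/(2^{k}\sqrt{\deg M}))$ (the first error term). For the random part, (LI$\bigstar$) implies that as $X$ ranges over the positive integers the tuple $(X\gamma_{j}(\chi)\bmod 2\pi)_{j,\chi}$ equidistributes on the corresponding torus. Pairing $\gamma_{j}(\chi)$ with $-\gamma_{j}(\chi)$ through the complex-conjugate symmetry of the inverse zeros identifies the limiting distribution $\mu_{M,f_{k}}^{\mathrm{norm}}$ with the law of a sum of independent bounded centred random variables
\begin{equation*}
Y(M):=\frac{4\sqrt{q}\,F(M)}{\phi(M)}\sum_{\chi^{2}=\chi_{0},\,\chi\neq\chi_{0}}\,\sum_{\gamma_{j}(\chi)>0}\frac{\cos(\Theta_{j,\chi}+\varphi_{j,\chi})}{|\alpha_{j}(\chi)-1|},
\end{equation*}
with $\Theta_{j,\chi}$ independent and uniform on $[0,2\pi)$ and $\varphi_{j,\chi}$ fixed phases. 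In particular $\mu_{M,f_{k}}^{\mathrm{norm}}$ exists under (LI$\bigstar$).

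Finally I would apply a quantitative Berry--Esseen theorem to $Y(M)$. Its variance is $V(M)=\frac{q-1}{2^{\omega(M)-1}\deg M}\sum_{\chi,j}|\alpha_{j}(\chi)-1|^{-2}$. Its main term comes from the exact conductor-theoretic identity $\sum_{\chi^{2}=\chi_{0},\,\chi\neq\chi_{0}}d_{\chi}=2^{\omega(M)-1}\deg M+O(2^{\omega(M)})$ for real non-trivial characters of a squarefree modulus, combined with the elementary average $\frac{1}{2\pi}\int_{-\pi}^{\pi}|\sqrt{q}e^{i\gamma}-1|^{-2}d\gamma=(q-1)^{-1}$; a uniform effective equidistribution of the critical zeros per character (via the explicit formula, error $O(\log d_{\chi})$) then gives $V(M)=1+O(\log\deg M/\deg M)$. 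Since every amplitude is bounded by $\ll(2^{\omega(M)}\deg M)^{-1/2}$, Berry--Esseen yields Kolmogorov distance $\ll(2^{\omega(M)}\deg M)^{-1/2}$ to $\mathcal{N}(0,V(M))$, and comparison with $\mathcal{N}(0,1)$ adds an extra $O(|V(M)-1|)$; summing gives the bound stated. The main obstacle is this sharp variance calculation: the main term requires the precise conductor-structure count, while the $O(\log\deg M/\deg M)$ power-saving demands a uniform effective equidistribution of zeros over the family of real non-trivial characters modulo $M$, which is more delicate than the single-character computations in \cite{Cha2008}.
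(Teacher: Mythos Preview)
Your overall architecture is correct and matches the paper's: reduce via Theorem~\ref{Th_Difference_k_general_deg<X}, use (LI\ding{70}) to identify $\mu_{M,f_k}^{\mathrm{norm}}$ with the law of a sum of independent bounded centred variables, and apply a Berry--Esseen type argument. The paper packages this as the case $b=0$ of Proposition~\ref{Prop M limit using Berry Esseen}; the piece you call the ``deterministic part'' is exactly the quantity $B(M)\asymp 2^{-k}\sqrt{2^{\omega(M)}/\deg M}$ there, and it produces the first error term just as you say.

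There are two substantive differences worth noting. First, the variance computation is much simpler than you suggest, and your ``main obstacle'' is not needed. Rather than proving effective equidistribution of the zero-angles, the paper invokes Cha's exact per-character identity \cite[(44)]{Cha2008},
\[
\sum_{j=1}^{d_\chi}\left|\frac{\alpha_j(\chi)}{\alpha_j(\chi)-1}\right|^2=\frac{q}{q-1}\bigl(\deg M^{*}(\chi)-2\bigr)+O\bigl(\log(\deg M^{*}(\chi)+1)\bigr),
\]
which is a direct log-derivative evaluation, not a statement about zero distribution. Summing this over the $2^{\omega(M)}-1$ real non-trivial characters (equivalently over divisors $D\mid M$, $D\neq 1$) gives the estimate~\eqref{bound IM} and hence $V(M)=1+O(\log\deg M/\deg M)$ with no appeal to equidistribution at all. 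Your route via the integral average $\frac{1}{2\pi}\int|\sqrt{q}e^{i\gamma}-1|^{-2}\,d\gamma=(q-1)^{-1}$ plus uniform effective equidistribution would also work in principle, but it is strictly harder and unnecessary.

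Second, at the Berry--Esseen step the paper works on the Fourier side: under (LI\ding{70}) the characteristic function of $\mu_{M,f_k}^{\mathrm{norm}}$ is an explicit product of $J_0$ Bessel functions (Proposition~\ref{Prop_limitingDist}\ref{Item_Fourier}), and one applies Esseen's smoothing inequality together with the expansion $\log J_0(z)=-z^2/4+O(z^4)$ and the decay of $J_0$. Your approach via the classical Berry--Esseen theorem for independent bounded summands is an equivalent alternative and yields the same Kolmogorov bound; the Fourier-side version has the minor advantage that it simultaneously handles the convolution with the two-point measure coming from the $(-1)^X$ term (your $D_{k,f}$), which is why the paper's Proposition~\ref{Prop M limit using Berry Esseen} is stated for general $b\ge 0$ rather than only $b=0$.
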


\begin{Rk}
Note that a sequence of irreducible polynomials $M$ with increasing degree satisfies the hypothesis $\frac{2^{\omega(M)}}{\deg M} \rightarrow 0$, thus Theorem~\ref{Th_CentralLimit} generalizes \cite[Th.~6.2]{Cha2008}.
We observe in particular that the rate of convergence to the Gaussian distribution increases with $k$, this justifies an observation in the number fields setting \cite{Meng2017}: the race seems to be less biased when $k$ is large.
\end{Rk}

In the other direction, fixing a modulus and letting $k$ grow, we obtain the following result.
\begin{theo}
\label{Th central limit for k under LI}
Let $\mathbf{F}_{q}$ be a finite field of odd characteristic and  $M \in \mathbf{F}_q[t]$ satisfying \emph{(LI\ding{70})}.
Then, for $f =\Omega$ or $\omega$, the bias in the distributions of $\Delta_{f_k}(X;M,\square,\boxtimes)$ dissipates as $k\rightarrow\infty$.
\end{theo}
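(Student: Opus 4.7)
The plan is to specialize Theorem~\ref{Th_Difference_k_general_deg<X} to the race $(\square,\boxtimes)$, to extract a $k$-independent main term using (LI\ding{70}), and then to conclude via Weyl equidistribution together with an antisymmetry argument on the resulting torus.

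First, by character orthogonality on the subgroup of squares modulo $M$, I would observe that $c(\chi,\square,\boxtimes)=0$ unless $\chi\neq\chi_0$ is a (non-principal) real character, in which case $c(\chi)=\frac{2}{\phi(M)}$. For every such $\chi$ the hypothesis (LI\ding{70}) immediately gives $m_+(\chi)=m_-(\chi)=0$ (since $0$ cannot belong to a $\mathbf{Q}$-linearly independent set, and $\pi$ already appears once as the distinguished element) together with $m_j(\chi)=1$ for each non-real inverse zero $\alpha_j(\chi)$ of norm $\sqrt q$. Inserting these values into \eqref{Form_deg=n} or \eqref{Form_deg=n_littleomega} collapses the formula, for each fixed $k$ and as $X\to\infty$, to
\[
\Delta_{f_k}(X;M,\square,\boxtimes) = (-1)^k\bigl[\,2^{-k}C_\pm(X)+T(X)\,\bigr]+o_X(1),
\]
where $C_\pm(X)$ is a bounded function depending only on $(-1)^X$ and
\[
T(X)=\sum_{\chi^2=\chi_0,\,\chi\neq\chi_0} c(\chi)\sum_{\gamma_j(\chi)\neq 0,\pi}\frac{\alpha_j(\chi)}{\alpha_j(\chi)-1}\,e^{iX\gamma_j(\chi)}
\]
is real-valued (zeros occur in conjugate pairs), uniformly bounded in $X$, and \emph{independent of $k$}.

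Next, since (LI\ding{70}) combined with the inclusion of $\pi$ makes the frequencies $\{\gamma_j(\chi)\}\cup\{\pi\}$ linearly independent over $\mathbf{Q}$, Weyl equidistribution shows that $\bigl(e^{iX\gamma_j(\chi)},(-1)^X\bigr)$ is equidistributed on the product torus as $X$ ranges over $\mathbf{N}$. Hence for every fixed $k$ the natural density of $\{X:\Delta_{f_k}(X;M,\square,\boxtimes)>0\}$ exists and equals the Haar measure of $\{F_k>0\}$, where $F_k(\mathbf{z},w)=(-1)^k[2^{-k}C_\pm(w)+F(\mathbf{z})]$ is the torus function attached to the main term. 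The key structural property of
\[
F(\mathbf{z})= 2\,\re\sum_{\chi,j}c(\chi)\frac{\alpha_j(\chi)}{\alpha_j(\chi)-1}\,z_j
\]
is the antisymmetry $F(-\mathbf{z})=-F(\mathbf{z})$; since Haar measure is invariant under $\mathbf{z}\mapsto-\mathbf{z}$, the distribution of $F$ is symmetric about $0$, and because $F$ is a non-constant real-analytic function its zero set has measure zero, whence $\Pr(F>0)=\tfrac12$.

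To conclude, I would send $k\to\infty$: the perturbation $2^{-k}C_\pm(w)$ tends uniformly to $0$ on the torus, so by continuity of the distribution of $F$ one deduces $\Pr(F_k>0)\to\tfrac12$, which is the dissipation of the bias. The main obstacle is checking that $F$ is genuinely non-constant, i.e.\ $T\not\equiv 0$: this amounts to exhibiting at least one quadratic character $\chi$ modulo $M$ whose $L$-function carries a non-real inverse zero of norm $\sqrt q$. By Weil's theorem, a primitive non-principal quadratic character modulo $M$ contributes $\deg M-1$ such inverse zeros, and under (LI\ding{70}) none of them equals $\pm\sqrt q$; so the non-real contributions are present whenever $M$ admits a primitive quadratic character with $L$-polynomial of positive degree in $u$, while the remaining degenerate cases force $\Delta_{f_k}(X)$ itself to be $o_X(1)$ and the dissipation conclusion becomes trivial.
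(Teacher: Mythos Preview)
Your argument is correct and mirrors the paper's own deduction of the theorem from Proposition~\ref{Prop k limit}\ref{Item max integer}: under (LI\ding{70}) one has $m_\pm(\chi)=0$ and $m_j(\chi)=1$, so the asymptotic formula splits as a $2^{-k}$-damped bounded term plus a $k$-independent oscillating sum whose limiting distribution is symmetric about $0$, and letting $k\to\infty$ kills the former. The paper packages this through the Fourier-transform computation and L\'evy's continuity theorem inside Proposition~\ref{Prop k limit}, while you argue directly via Weyl equidistribution on the full torus and the antisymmetry $F(-\mathbf{z})=-F(\mathbf{z})$; under (LI\ding{70}) these are equivalent, and your route is the more elementary one, at the cost of not covering the non-LI situations that Proposition~\ref{Prop k limit} also treats.

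Two small corrections. First, $c(\chi,\square,\boxtimes)=\frac{1}{\lvert\boxtimes\rvert}$, not $\frac{2}{\phi(M)}$ (these coincide only when $\omega(M)=1$); this does not affect the argument. Second, your last sentence on the degenerate case is wrong: if no quadratic $L$-function modulo $M$ carries a non-real inverse zero of norm $\sqrt q$ --- which happens exactly when $\deg M=1$ --- then $\Delta_{f_k}(X)=(-1)^k 2^{-k}C_\pm(X)+o_X(1)$ with $C_\pm(X)>0$, so the sign is fixed for every $k$ and the bias is complete, not dissipating. The paper's deduction from Proposition~\ref{Prop k limit}\ref{Item max integer} has the same implicit restriction (one needs at least one non-real zero so that $m_{f,\max}=1$ is an integer), so this is a shared edge case rather than a flaw specific to your proof.
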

This is a corollary of Proposition~\ref{Prop k limit} which is more general and unconditional.

\section{Limiting distribution and bias}\label{Sec_LimDist}

In this section, the assertions are given in the context of almost periodic functions as in \cite{ANS}, as we expect these to be useful for other work on Chebyshev's bias over function fields.
Our main results are based on the existence of a limiting distribution for functions defined over the integers, let us briefly recall the definitions and ideas to obtain such results.

\begin{defi}
Let $F:\mathbf{N}\rightarrow\mathbf{R}$ be a real function, 
we say that $F$ admits a limiting distribution if there exists a probability measure $\mu$ on Borel sets in $\mathbf{R}$ such that
for any bounded Lipschitz continuous function $g$, we have
\begin{align*}
\lim_{Y\rightarrow\infty}\frac{1}{Y}\sum_{n\leq Y}g(F(n)) = 
\int_{\mathbf{R}}g(t)\diff\mu(t).
\end{align*}
We call $\mu$ the limiting distribution of the function $F$.
\end{defi}

\begin{Rk}
Note that if the function $F$ admits a limiting distribution $\mu$, and that $\mu(\lbrace 0\rbrace) = 0$, then by dominated convergence theorem,  the bias of $F$ towards positive values (see Definition~\ref{Defn-bias}) is well defined and we have $\dens(F >0) = \mu((0,\infty))$. 
\end{Rk}

We focus on the limiting distribution to study the bias of the difference function.
For $f = \Omega$ or $\omega$, and for any $k\geq 1$, the fact that the function $\Delta_{f_k}(\cdot; M, A,B)$ admits a limiting distribution follows directly from the asymptotic formula of Theorem~\ref{Th_Difference_k_general_deg<X} and the following result.

\begin{prop}\label{Prop_limitingDist}
Let $\gamma_2,\ldots,\gamma_N \in (0,\pi)$ be distinct real numbers.
For any $C_0,c_1\in \mathbf{R}$, $c_2,\ldots, c_N \in \mathbf{C}^{*}$,
let $F:\mathbf{N} \rightarrow \mathbf{R}$ be a function satisfying
\begin{equation}\label{Almost periodic function}
F(n) = C_0 + c_1e^{in\pi} + \sum_{j=2}^{N}\( c_je^{in\gamma_j} + \overline{c_j}e^{-in\gamma_j}\) + o(1)
\end{equation}
as $n\rightarrow \infty$.
Then the function $F$ admits a limiting distribution $\mu$ with mean value $C_0$ and variance $c_1^2 + 2\sum\limits_{j=1}^{N}\lvert c_j\rvert^2$.
Moreover
\begin{enumerate}[label = \roman*)]
\item\label{Item_Support} the measure $\mu$ has support in $\left[C_0 - \lvert c_1\rvert - \sum_{j=2}^{N}2\lvert c_j\rvert, C_0 + \lvert c_1\rvert +\sum_{j=2}^{N}2\lvert c_j\rvert\right]$,\\
in particular, if $\lvert C_0 \rvert > \lvert c_1\rvert +  \sum_{j=2}^{N}2\lvert c_j\rvert $ then $\dens(C_0F >0) = 1$;
\item\label{Item_continuous} if there exists $j\in \lbrace 2, \ldots, N\rbrace$ such that $\gamma_j \notin \mathbf{Q}\pi$, then $\mu$ is continuous,\\ 
in particular $\dens(F >0) = \mu((0,\infty))$;
\item\label{Item_symmetry} if the smallest sub-torus of $\mathbf{T}^{N}$ containing $\lbrace (n\pi, n\gamma_2,\ldots, n\gamma_N) : n\in \mathbf{Z} \rbrace$ is symmetric,
then the distribution $\mu$ is symmetric with respect to $C_0$;
\item\label{Item_Fourier} if the set $\lbrace \pi, \gamma_2,\ldots,\gamma_N\rbrace$ is linearly independent over $\mathbf{Q}$, then the Fourier transform $\hat\mu$ of the measure $\mu$ is given by
\[
    \hat\mu(\xi) = e^{-iC_{0}\xi}\cos(c_1\xi)
    \prod_{j=2}^{N}J_{0}\left( 2\lvert c_j \rvert \xi \right),
\]
where $J_{0}(z) = \int_{-\pi}^{\pi}\exp\left(iz\cos(\theta)\right) \frac{\diff\theta}{2\pi}$ is the $0$-th Bessel function of the first kind.
\end{enumerate}
\end{prop}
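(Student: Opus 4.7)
The plan is to follow the standard framework for limiting distributions of almost periodic functions, as in \cite{ANS}. Write $F(n)=G(n)+\epsilon(n)$ with $\epsilon(n)=o(1)$ and
\[
G(n)=C_0+c_1e^{in\pi}+\sum_{j=2}^{N}\bigl(c_j e^{in\gamma_j}+\overline{c_j}e^{-in\gamma_j}\bigr).
\]
Then $G=\Phi\circ\psi$, where $\psi:\mathbf{N}\to\mathbf{T}^N$ sends $n\mapsto(n\pi,n\gamma_2,\ldots,n\gamma_N)$ and $\Phi:\mathbf{T}^N\to\mathbf{R}$ is the continuous function $\Phi(\theta_1,\ldots,\theta_N)=C_0+c_1\cos\theta_1+\sum_{j=2}^{N}2\re(c_j e^{i\theta_j})$. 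For any bounded Lipschitz~$g$ one has $\lvert g(F(n))-g(G(n))\rvert\ll\lvert\epsilon(n)\rvert$, whose Ces\`aro average tends to zero, so $F$ and $G$ share the same limiting distribution.

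Next, let $H\subset\mathbf{T}^N$ be the closure of the cyclic subgroup generated by $(\pi,\gamma_2,\ldots,\gamma_N)$. By the Kronecker--Weyl equidistribution theorem, the orbit $\{\psi(n):n\in\mathbf{N}\}$ equidistributes with respect to the normalized Haar measure $m_H$ on $H$, so $F$ admits the limiting distribution $\mu=\Phi_{*}m_H$. The mean and variance are then computed as direct Ces\`aro averages of $G$ and $(G-C_0)^2$: since $e^{i\pi}\neq 1$ and each $e^{\pm i\gamma_j}\neq 1$ (as $\gamma_j\in(0,\pi)$), all oscillating terms in $G(n)$ average to $0$, giving mean $C_0$; and since the $\gamma_j$ are distinct and lie in $(0,\pi)$, so that $\gamma_j+\gamma_\ell\notin 2\pi\mathbf{Z}$ always and $\gamma_j-\gamma_\ell\in 2\pi\mathbf{Z}$ only when $j=\ell$, the only non-vanishing contributions to $(G(n)-C_0)^2$ are the diagonal $\lvert c_j\rvert^2$ terms together with $c_1^2$, giving the stated variance.

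For (i), the bound $\lvert\Phi(\theta)-C_0\rvert\le\lvert c_1\rvert+2\sum_{j=2}^{N}\lvert c_j\rvert$ is immediate; when $\lvert C_0\rvert$ exceeds it, $G(n)$ has the sign of $C_0$ for every $n$, and $F(n)$ inherits this sign for $n$ large, so the density equals $1$. For (ii), if some $\gamma_j\notin\mathbf{Q}\pi$ then $H$ has positive dimension and $\Phi$ is a non-constant real-analytic function on $H$ (because $c_j\neq 0$), so its level sets are $m_H$-null and $\mu$ has no atoms. For (iv), the LI hypothesis yields via a character computation that $H=\{0,\pi\}\times\mathbf{T}^{N-1}$ with product Haar measure; writing $\hat\mu(\xi)=\int_H e^{-i\xi\Phi(\theta)}\,dm_H(\theta)$ factors the integral into an average over $\theta_1\in\{0,\pi\}$ (which yields $e^{-iC_0\xi}\cos(c_1\xi)$) and, for each $j\geq 2$, an integral over $\mathbf{T}$ of $\exp(-2i\xi\re(c_j e^{i\theta_j}))$. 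The substitution $\theta_j\mapsto\theta_j-\arg c_j$ and the evenness of $J_0$ then turn each such factor into $J_0(2\lvert c_j\rvert\xi)$, yielding the claimed formula.

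The main obstacle is (iii): one must translate the hypothesis that the smallest sub-torus containing $\psi(\mathbf{Z})$ is symmetric into an explicit $m_H$-preserving involution of $H$ under which $\Phi-C_0$ is sent, in distribution, to its negative. This amounts to a Pontryagin-duality argument in which the symmetry of $H$ forces the characters contributing to $\Phi$ to occur in appropriately conjugate pairs; once this involution is identified, symmetry of $\mu$ around $C_0$ follows from Haar-invariance.
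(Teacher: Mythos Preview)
Your proof is correct and follows essentially the same approach as the paper's: both derive the limiting distribution as the pushforward of Haar measure on the Kronecker--Weyl sub-torus, handle (i) by the obvious bound, compute (iv) by factoring the Fourier integral over $\{0,\pi\}\times\mathbf{T}^{N-1}$, and for (ii) and (iii) invoke (as the paper does, via citations to \cite{Devin2018}) the real-analyticity and symmetry arguments you sketch. Your handling of the $o(1)$ term via the Lipschitz bound and your direct Ces\`aro computation of the moments are slightly more explicit than the paper's appeal to Helly's selection theorem and polynomial approximation, but the substance is the same.
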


Kowalski, \cite[Prop.~1.1]{Kowalski2010}, showed that in certain families of polynomials $M \in \mathbf{F}_q[t]$, the hypothesis of Linear Independence (LI) is satisfied generically when $q$ is large (with fixed characteristic) for the $L$-function of the primitive quadratic character modulo $M$.
 That is, the imaginary parts of the zeros of $L(\cdot,\chi_{M})$ are linearly independent over $\mathbf{Q}$. In particular the hypotheses in \ref{Item_continuous}, \ref{Item_symmetry} and \ref{Item_Fourier} are satisfied generically for $F = \pi_{k}(\cdot,\chi_M)$ (see \eqref{Eq defi pi(chi)}).
 We expect this to hold more generally for example when racing between quadratic residue and non-quadratic residues as in Section~\ref{Sec_Examples}.
The Linear Independence has also been proved generically in other families of $L$-functions over functions fields \cite{CFJ_Indep,Perret-Gentil}. 
Proposition~\ref{Prop_limitingDist} is a consequence of a general version of the Kronecker--Weyl Equidistribution Theorem (see \cite[Lem.~2.7]{Humphries}, \cite[Th.~4.2]{Devin2018}, also \cite[Lem.~B.3]{MartinNg}).
\begin{lem}[Kronecker--Weyl]\label{Th_KW}
	Let $\gamma_1,\ldots,\gamma_N \in \mathbf{R}$ be real numbers.
	Denote $A(\gamma)$ the closure of the $1$-parameter group $\lbrace y(\gamma_{1},\ldots,\gamma_{N}) : y\in\mathbf{Z}\rbrace/(2\pi\mathbf{Z})^{N}$ in the $N$-dimensional torus $\mathbf{T}^{N}:= (\mathbf{R}/2\pi\mathbf{Z})^{N}$.
    	Then $A(\gamma)$ is a sub-torus of $\mathbf{T}^{N}$ and we have
	for any continuous function $h: \mathbf{T}^{N}\rightarrow \mathbf{C}$,
	\begin{equation*}
\lim_{Y\rightarrow\infty}\frac{1}{Y}\sum_{n=0}^{Y}h(n\gamma_{1},\ldots,n\gamma_{N})
	= \int_{A(\gamma)}h(a)\diff\omega_{A(\gamma)}(a)
	\end{equation*}
	where $\omega_{A(\gamma)}$ is the normalized Haar measure on $A(\gamma)$. 
\end{lem}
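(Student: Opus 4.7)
The plan is to deduce the Kronecker--Weyl identity from Weyl's equidistribution criterion applied to the characters of $\mathbf{T}^{N}$. First I would note that $A(\gamma)$, being defined as the closure of the cyclic subgroup $\{n(\gamma_{1},\ldots,\gamma_{N}):n\in\mathbf{Z}\}/(2\pi\mathbf{Z})^{N}$, is automatically a closed subgroup of the compact abelian Lie group $\mathbf{T}^{N}$. By the structure theorem for closed subgroups of $\mathbf{T}^{N}$, it is a finite union of translates of a sub-torus (the connected component of the identity), and consequently carries a unique normalized Haar measure $\omega_{A(\gamma)}$. This is the measure that should appear on the right-hand side.

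Next, by Stone--Weierstrass the trigonometric polynomials are uniformly dense in $C(\mathbf{T}^{N})$, so it suffices to prove the identity for each character $\chi_{k}(x)=e^{i\langle k,x\rangle}$, $k\in\mathbf{Z}^{N}$, and then extend by linearity and uniform approximation, using that the discrete averages are bounded by $\|h\|_{\infty}$ to pass to the limit. For a fixed $k$ I would split into two cases. If $\langle k,\gamma\rangle\in 2\pi\mathbf{Z}$, then $\chi_{k}\equiv 1$ on the orbit and hence, by continuity, on its closure $A(\gamma)$, so both the discrete average and the Haar integral are trivially $1$. Otherwise, the discrete average is a truncated geometric progression
\[
\frac{1}{Y}\sum_{n=0}^{Y}e^{in\langle k,\gamma\rangle} = O\!\left(\frac{1}{Y\,|1-e^{i\langle k,\gamma\rangle}|}\right) \longrightarrow 0,
\]
while on the right $\chi_{k}$ restricts to a nontrivial character of the compact group $A(\gamma)$, whose integral against its Haar measure vanishes by orthogonality.

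The only subtle point, and the place I would expect most of the work to go, is the equivalence
\[
\chi_{k}\big|_{A(\gamma)} \equiv 1 \iff \langle k,\gamma\rangle \in 2\pi\mathbf{Z},
\]
which is what guarantees that the limit of the averages is the Haar integral over the \emph{full} closure $A(\gamma)$ and not over some strictly smaller subgroup. The forward direction uses that a continuous character which is $1$ at the generator $\gamma$ is $1$ on the cyclic subgroup and hence, by continuity, on its closure; the reverse direction is immediate. Once this is in hand, the two cases above match term-by-term what is computed on the Haar-measure side, so the character identity extends from trigonometric polynomials to arbitrary $h\in C(\mathbf{T}^{N})$ by the density argument mentioned above, completing the proof.
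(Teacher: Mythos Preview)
The paper does not supply its own proof of this lemma; it is stated with references to \cite[Lem.~2.7]{Humphries}, \cite[Th.~4.2]{Devin2018}, and \cite[Lem.~B.3]{MartinNg}. Your argument via Stone--Weierstrass reduction to characters and Weyl's criterion is exactly the standard proof found in those sources and is correct.

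One minor slip that does not affect validity: in your discussion of the equivalence $\chi_{k}\big|_{A(\gamma)}\equiv 1 \iff \langle k,\gamma\rangle\in 2\pi\mathbf{Z}$, the labels ``forward'' and ``reverse'' are interchanged. The implication $\Rightarrow$ is the immediate one (simply evaluate at the point $\gamma\in A(\gamma)$), whereas $\Leftarrow$ is the direction that requires passing from the cyclic subgroup to its closure by continuity. You also rightly note that $A(\gamma)$ need only be a finite union of cosets of a connected sub-torus rather than connected itself; the paper's phrasing ``sub-torus'' is a mild abuse here, and your version is the more careful statement.
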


\begin{proof}[Proof of Proposition~\ref{Prop_limitingDist}]
As in the proof of \cite[Th.~3.2]{Cha2008}, we associate Lemma~\ref{Th_KW} with the asymptotic formula of Proposition~\ref{Prop_limitingDist} and Helly's selection theorem \cite[Th.~25.8 and Th.~25.10]{Billingsley}. From this, one can show that the corresponding limiting distribution exists and is a push-forward of the Haar measure on the sub-torus generated by the the $\gamma_j$'s.

Then \ref{Item_Support} is straightforward, and since the measure has compact support, its moments can be computed using compactly supported approximations of polynomials, this gives the result on the mean value and variance.
The point \ref{Item_continuous} follows from the same lines as \cite[Th.~2.2]{Devin2018} using the fact that the set of zeros is finite and being more careful about the rational multiples of $\pi$ to ensure that the sub-torus is not discrete (see also \cite[Th.~4]{Devin2019}). 
The point \ref{Item_symmetry} follows directly from the proof of \cite[Th.~2.3]{Devin2018}.

To prove the point \ref{Item_Fourier}, we compute the Fourier transform:
\begin{align*}
    \hat\mu(\xi) &= \lim_{Y\rightarrow\infty} \frac{1}{Y} \sum_{n\leq Y}\exp(-i\xi F(n)) \\
    &= e^{-iC_0 \xi} \int_{A(\pi,\gamma_1,\ldots,\gamma_N)}
    \exp\Bigg(-i\xi\bigg( c_1(-1)^{a_1} + \sum_{j=2}^{N}2\re(c_je^{ia_j})  \bigg)\Bigg) \diff\omega(a) \\
    &= e^{-iC_0\xi} \frac{1}{2}\left( e^{i\xi c_1} + e^{-i\xi c_1} \right) \prod_{j=2}^{N}\int_{-\pi}^{\pi}\exp\left(i\xi 2 \lvert c_j\rvert \cos(\theta)\right) \frac{\diff\theta}{2\pi},
\end{align*}
where in the last line we use the linear independence to write $A(\pi,\gamma_1,\ldots,\gamma_N) = \lbrace 0, \pi\rbrace \times \mathbf{T}^{N-1}$, and the corresponding Haar measure as the product of the Haar measures. This concludes the proof.
\end{proof}

\begin{Rk}
Note that in the case all the $\gamma_j$'s are rational multiples of $\pi$, then the main term in the asymptotic expansion \eqref{Almost periodic function} is a periodic function. Thus the limiting distribution obtained in Proposition~\ref{Prop_limitingDist} is a linear combination of Dirac deltas supported on the image of this periodic function. If this image does not contain $0$, the limiting distribution has no mass at the point $0$ hence the bias is well defined. 
Otherwise the determination of the bias requires to study lower order terms in the asymptotic expansion, which are for now out of reach.
\end{Rk}

\section{Special values of the bias}\label{Sec_Examples}
In this section, we assume that the field $\mathbf{F}_{q}$ has characteristic $\neq 2$ and that the polynomial $M$ is square-free. When $q$ and the degree of $M$ are small, it is possible to compute the Dirichlet $L$-functions associated to the quadratic characters modulo $M$ explicitly. 
In particular, we illustrate our results in the case of races between quadratic residues ($\square$) and non-quadratic residues ($\boxtimes$) modulo~$M$.
In this case the asymptotic formula of Theorem~\ref{Th_Difference_k_general_deg<X} is a sum over quadratic characters.
Indeed, let $\chi$ be a non-trivial, non-quadratic character, it induces a non-trivial character on the subgroup $\square$ of quadratic residues, by orthogonality one has
$c(\chi,\square,\boxtimes) = 0.$
For $\chi$ a quadratic character, one has
\begin{align*}
    c(\chi,\square,\boxtimes) &= 
    \frac{1}{\phi(M)}\( \frac{1}{\lvert \square\rvert}\sum_{a\in \square}1 - 
    \frac{-1}{\lvert \boxtimes\rvert} \sum_{a\in \square}1 \)\\
    &= \frac{1}{\phi(M)}\( 1 +
    \frac{\lvert \square\rvert}{\lvert \boxtimes\rvert} \) = 
     \frac{1}{\lvert \boxtimes\rvert}.
\end{align*}
Thus, for $ k =o((\log X)^{\frac12})$, one has
    \begin{multline}\label{Formula_Res_vs_NonRes}
    \Delta_{\Omega_k}(X; M, \square,\boxtimes)  \\
    = \frac{(-1)^k}{\lvert \boxtimes\rvert} \Bigg\{ \sum_{\substack{\chi \bmod M \\ \chi^2 = \chi_0 \\ \chi\neq \chi_0}}\Bigg(\ \(m_+(\chi) +\tfrac{1}{2}\)^k \frac{\sqrt{q}}{\sqrt{q}-1} +\(m_-(\chi)+\tfrac{1}{2}\)^k \frac{\sqrt{q}}{\sqrt{q} + 1}(-1)^{X} \\
    +\sum_{\gamma_j\neq 0, \pi} m_j(\chi)^k \frac{\alpha_j(\chi)}{\alpha_j(\chi) -1} e^{iX\gamma_{j}(\chi)} \Bigg)  + O\(\frac{d^k k^2}{\gamma(M)\log X}\) \Bigg\},
	\end{multline}
	and, if $q \geq 5$, 
	    \begin{multline}\label{Formula_Res_vs_NonRes_littleomega}
   \Delta_{\omega_k}(X; M, \square,\boxtimes)  \\
    = \frac{(-1)^k}{\lvert \boxtimes\rvert} \Bigg\{ \sum_{\substack{\chi \bmod M \\ \chi^2 = \chi_0 \\ \chi\neq \chi_0}}\Bigg(\ \(m_+(\chi) -\tfrac{1}{2}\)^k \frac{\sqrt{q}}{\sqrt{q}-1} +\(m_-(\chi)-\tfrac{1}{2}\)^k \frac{\sqrt{q}}{\sqrt{q} + 1}(-1)^{X} \\
    +\sum_{\gamma_j\neq 0, \pi} m_j(\chi)^k \frac{\alpha_j(\chi)}{\alpha_j(\chi) -1} e^{iX\gamma_{j}(\chi)} \Bigg)  + O\(\frac{d^k k^2}{\gamma(M)\log X}\) \Bigg\}.
	\end{multline}
By Proposition~\ref{Prop_limitingDist}, we know that for all $k$ the function in \eqref{Formula_Res_vs_NonRes} admits a limiting distribution $\mu_{M,\Omega_k}$ with  mean value
\begin{equation}\label{Form_MeanValue}
\mathbf{E}\mu_{M,\Omega_k} = \frac{(-1)^k}{\lvert \boxtimes\rvert} \sum_{\substack{\chi \bmod M \\ \chi^2 = \chi_0 \\ \chi\neq \chi_0}}\ \(m_+(\chi) +\frac{1}{2}\)^k \frac{\sqrt{q}}{\sqrt{q}-1},  
\end{equation}
and variance
\begin{align*}
&\Var(\mu_{M,\Omega_k}) \\
&= \frac{1}{\lvert \boxtimes\rvert^2}  \Bigg( \Bigg(\sum_{\substack{\chi \bmod M \\ \chi^2 = \chi_0 \\ \chi\neq \chi_0}}\(m_-(\chi)+\frac{1}{2}\)^{k}\Bigg)^2 \frac{q}{(\sqrt{q} + 1)^2} 
    +\sum_{\alpha_j\neq \pm \sqrt{q}}\Bigg( \sum_{\substack{\chi \bmod M \\ \chi^2 = \chi_0 \\ \chi\neq \chi_0}} m_j^k(\chi) \frac{\lvert\alpha_j\rvert}{\lvert\alpha_j -1\rvert}\Bigg)^2 \Bigg). 
\end{align*}
The results are similar for $\Delta_{\omega_k}(X; M, \square,\boxtimes)$, with $\(m_{\pm}(\chi)+\frac{1}{2}\)$ replaced by $\(m_{\pm}(\chi)-\frac{1}{2}\)$, we denote by $\mu_{M,\omega_k}$ the corresponding limiting distribution.

In the following section we study various square-free polynomials $M$ and we denote by $\chi_M$ the primitive quadratic character modulo $M$.
In the case of prime numbers, it has been observed that the bias tends to disappear as $k\rightarrow\infty$. Moreover in the case of the race with fixed $\Omega$, the bias changes direction with the parity of $k$. Whereas the bias always stays in the direction of the quadratic residues in the race with fixed $\omega$. We present here various examples where this does (or not) happen in the context of irreducible polynomials.

\subsection{Case with no real inverse zero}

In the generic case, we expect that $m_{\pm}(\chi) =0$. 
In particular, for $k$ even, $\mu_{M,\Omega_k}= \mu_{M,\omega_k}$, and, if the non-real zeros are independent of $\pi$, for $k$ odd, $\mu_{M,\Omega_k}$ is the symmetric of $\mu_{M,\omega_k}$ with respect to $0$.
Moreover, for $f = \Omega$ or $\omega$, the mean value of $\mu_{M,f_k}$ becomes negligible as $k$ grows.
This situation is very similar to the case of primes in $\mathbf{Z}$ (see \cite{Meng2017}).
More precisely, we can simplify the expression of the mean value in \eqref{Form_MeanValue}. One has
\begin{align*}
\epsilon_f^k\mathbf{E}\mu_{M,f_k} = \frac{1}{\lvert \boxtimes\rvert}  \sum_{\substack{\chi \bmod M \\ \chi^2 = \chi_0 \\ \chi\neq \chi_0}} \(\frac{1}{2}\)^k \frac{\sqrt{q}}{\sqrt{q}-1}  
= \frac{1}{\lvert \square\rvert}  \frac{1}{2^k} \frac{\sqrt{q}}{\sqrt{q}-1},
\end{align*}
where $\epsilon_{\Omega} = -1$, $\epsilon_{\omega} = 1$.
Note that in this case, $\mathbf{E}\mu_{M,\Omega_k}$ alternates sign as $k$ changes parity and $\mathbf{E}\mu_{M,\omega_k}$ has the same absolute value but stays positive.
Finally, if the sum over the non-real inverse zeros is not empty, one has 
\begin{equation*}
\mathbf{E}\mu_{M,f_k} \ll_M \frac{\sqrt{\Var(\mu_{M,f_k})}}{2^k}.
\end{equation*}
This hints towards a vanishing bias as $k$ gets large, see Proposition~\ref{Prop k limit} for a precise statement.

Let us start with an irreducible polynomial $M$ (as in \cite[Sec.~5]{Cha2008}). 
Assume that the $L$-function $\mathcal{L}(\cdot,\chi_M)$ has only simple zeros that are not real.
Then for $k\geq 1$ we have the formulas
\begin{multline*}
\Delta_{\Omega_k}(X; M, \square,\boxtimes)\\
=(-1)^{k+1} \left\{ \Delta_{\Omega_1}(X; M, \square,\boxtimes)
 +\frac{ 1 - \frac{1}{2^{k-1}} }{2\lvert\square\rvert} \left[ \frac{\sqrt{q}}{\sqrt{q}-1}+(-1)^X \frac{\sqrt{q}}{\sqrt{q}+1} \right] + O_{M}\(\frac{d^k k^2}{\log X} \)\right\},
 \end{multline*}
 \begin{multline*}
\Delta_{\omega_k}(X; M, \square,\boxtimes)\\
=(-1)^{k+1} \left\{ \Delta_{\Omega_1}(X; M, \square,\boxtimes)
 +\frac{1}{2\lvert\square\rvert} \left[ \frac{\sqrt{q}}{\sqrt{q}-1}+(-1)^X \frac{\sqrt{q}}{\sqrt{q}+1} \right]\right\} \\ +\frac{1}{2^k \lvert\square\rvert} \left[ \frac{\sqrt{q}}{\sqrt{q}-1}+(-1)^X \frac{\sqrt{q}}{\sqrt{q}+1} \right] + O_{M}\(\frac{d^k k^2}{\log X} \).
 \end{multline*}
 Note that the term $\frac{ -1 }{2\lvert \square\rvert}  \frac{\sqrt{q}}{\sqrt{q}-1}$ above is the mean value $\mathbf{E}\mu_{M,\Omega_1}$ of the limiting distribution associated to  the function $ \Delta_{\Omega_1}(X; M, \square,\boxtimes)$.

Thus, up to a change of sign, the function $\Delta_{f_k}(\cdot; M, \square,\boxtimes)$ satisfies properties similar to those of the function $\Delta_{\Omega_1}(\cdot; M, \square,\boxtimes)$ regarding the behavior at infinity and the limiting distribution, with the mean value of the limiting distribution going to $0$ as $k$ grows.

\begin{ex}[Bias in the ``wrong direction'']
In \cite[Ex.~5.3]{Cha2008}, Cha studies the polynomial $M = t^5 + 3t^4 + 4t^3 + 2t+ 2 \in \mathbf{F}_{5}[t]$, from his work, we observe that the function\footnote{Note that \cite[Ex.~5.3]{Cha2008} contains a typo, we have $\mathcal{L}(u,\chi_M) = (1 - 2 \sqrt{5} \cos(\tfrac{\pi}{5})u + 5u^2 )(1 - 2 \sqrt{5} \cos(\tfrac{2\pi}{5})u + 5u^2 ).$} 
$$X \mapsto\Delta_{\Omega_1}(X; M, \square,\boxtimes)
 +\frac{1}{2\lvert\square\rvert} \left[ \frac{\sqrt{5}}{\sqrt{5}-1}+(-1)^X \frac{\sqrt{5}}{\sqrt{5}+1} \right]$$ 
 is periodic of period~$10$ and takes positive values larger than $\frac{1}{2\lvert\square\rvert} \left[ \frac{\sqrt{5}}{\sqrt{5}-1}+(-1)^X \frac{\sqrt{5}}{\sqrt{5}+1} \right]$ for~$6$ values of~$X \bmod 10$.
 Thus there is a bias in the ``wrong direction'': one has for all $k\geq 1$ 
 \begin{align*}
     \dens((-1)^k\Delta_{\Omega_k}(\cdot;M,\square,\boxtimes) > 0) = \frac{4}{10}  < \frac12.
 \end{align*}
Contrary to what is expected in the generic case, When $k$ is odd the bias is in the direction of the quadratic residues.
Similarly we obtain that 
 \begin{align*}
     \dens(\Delta_{\omega_1}(\cdot;M,\square,\boxtimes) > 0) = \frac{7}{10}  > \frac12,
 \end{align*}
 and for $k \geq 2$,
  \begin{align*}
     \dens((-1)^{k+1}\Delta_{\omega_k}(\cdot;M,\square,\boxtimes) > 0) = \frac{6}{10}  > \frac12.
 \end{align*}
 In particular the bias changes direction according to the parity of $k$, and when $k$ is odd the bias is in the direction of the quadratic residues.
\end{ex}

As observed in \cite{Li2018}, when $M$ is not irreducible, the $L$-function $\mathcal{L}(\cdot,\chi_M)$ can have non-simple zeros and real zeros. Moreover in Proposition~\ref{Prop Chebyshev many factors}, we obtain extreme biases in races modulo polynomials $M$ with many irreducible factors.
We now focus on square-free non irreducible polynomials. 

\begin{ex}[Dissipating bias in case of double non-real zeros]
\label{Ex_double}

Take $q=5$, and
$M= t^6 + 2t^4 + 3t + 1$ in $\mathbf{F}_5[t]$.
One has
\[\mathcal{L}(u,\chi_M) = (1 + u + 5u^2)^2(1-u) = (1 - 2\sqrt{5}\cos(\theta_1) u + 5u^2)^2 (1-u),\]
where $\theta_1 = \pi + \arctan\sqrt{19}$.
The polynomial $M$ has two irreducible factors of degree $3$ in $\mathbf{F}_{5}[t]$. We denote $M=M_1 M_2$, and for $i=1$, $2$ let $\chi_i$ be the character modulo $M$ induced by the character $\chi_{M_i}$.
We have
\[\mathcal{L}(u,\chi_1) = (1-u +5u^2)(1-u^{3}) = (1 - 2\sqrt{5}\cos(\theta_1 - \pi) u + 5u^2) (1-u^3) \] 
and 
\[\mathcal{L}(u,\chi_2) = (1 +3u +5u^2)(1-u^{3})
= (1 - 2\sqrt{5}\cos(\theta_2) u + 5u^2) (1-u^3), \]
where $\theta_2 = \pi + \arctan(\sqrt{11}/3) $,
 the factor $(1-u^3)$ comes from the fact that the $\chi_i$ are not primitive (see e.g. \cite[Prop.~6.4]{Cha2008}).
Inserting this information in \eqref{Formula_Res_vs_NonRes} 
we obtain
    \begin{multline*}
 \Delta_{\Omega_k}(X; M, \square,\boxtimes)  
   \\ = \frac{(-1)^k}{\lvert \boxtimes\rvert}  \Bigg(\ \frac{3}{2^{k+2}} \(5+ \sqrt{5} + (-1)^{X}  (5-\sqrt{5})\) 
    + 2^{k+1}\re\(\frac{10}{11 + i\sqrt{19}} e^{iX\theta_1}\) \\ + 
    2\re\(\frac{10}{9+ i\sqrt{19}}  	
e^{iX(\theta_1 -\pi)}\)+
2\re\(\frac{10}{13 + i\sqrt{11}} e^{iX \theta_2}\) \Bigg) + O_{M}\(\frac{6^k k^2}{\log X}\) .
	\end{multline*}

We observe that $\theta_1$ is not a rational multiple of $\pi$.
This follows from the fact that for any $n\in\mathbf{N}$ the $5$-adic valuation of $\cos(n\theta_1)$ is $-n/2$, thus we cannot have $\cos(n\theta_1)=\pm 1$ except for $n=0$.
 Hence by Proposition~\ref{Prop_limitingDist}.\ref{Item_continuous}, 
for each $k\geq 1$,    the corresponding limiting distribution is continuous.
Moreover it has mean value $ \mathbf{E} \asymp\frac{(-1)^k}{2^{k}(k-1)!} $ and variance $\Var \asymp\frac{2^{2k}}{(k-1)!^2}$.

Note that LI is not satisfied in this example. However, Damien Roy and Luca Ghidelli observed that the set $\lbrace \pi, \theta_1,\theta_2 \rbrace$ is linearly independent over $\mathbf{Q}$. For any $(a,b,c) \in \mathbf{Z}^3$, we see using the Chebyshev polynomials of the second kind that  $\sin(a\pi + b\theta_1) \in \sqrt{19}\mathbf{Q}(\sqrt{5})$ and $\sin(c\theta_2) \in \sqrt{11}\mathbf{Q}(\sqrt{5})$, hence the only chance for them to be equal is to be $0$.

\begin{table}
\begin{tabular}{|r|c|c|}
\hline
 $k$ &  $\#\{ X \leq 10^9 : {\Delta}_{\Omega_k}(X; M, \square,\boxtimes) >0 \} $& $\#\{ X \leq 10^9 : {\Delta}_{\omega_k}(X; M, \square,\boxtimes) >0 \} $\\
\hline

1& $194\ 355\ 543$ & $805\ 644\ 606$\\ 

2 & $563\ 506\ 459$ &$ 563\ 506\ 459 $\\

3 & $484\ 542\ 923$ &$515\ 457\ 280$\\

4 &  $503\ 903\ 947$ &$503\ 903\ 947$   \\
5 & $499\ 014\ 553$ & $500\ 985\ 439$\\

6 & $500\ 247\ 844$ & $500\ 247\ 844 $ \\

7 & $499\ 937\ 823$ & $500\ 062\ 193$\\

8 & $ 500\ 015\ 580$ & $500\ 015\ 580$ \\

9 & $499\ 996\ 073$ &$500\ 003\ 876$ \\

10 & $500\ 000\ 986$ &  $500\ 000\ 986$ \\
\hline
\end{tabular}

\caption{Approximation of the bias of $\Delta_{\Omega_k}$ and $\Delta_{\omega_k}$ for $k \in \lbrace 1,\ldots, 10\rbrace$ }\label{Table_Ex2}
\end{table}

We observe that the term $2^{k+1}\re\(\frac{10}{11 + i\sqrt{19}} e^{iX\theta_1}\)$ will become the leading term as $k$ grows. This term corresponds to a symmetric distribution with mean value equal to zero. Proposition~\ref{Prop k limit} predicts that the bias tends to $\frac{1}{2}$ as $k$ grows.
We observe this tendency in the data;
in Table~\ref{Table_Ex2} we present an approximation of the bias for the functions $ \Delta_{f_k}(X; M, \square,\boxtimes) \bmod o(1) $, with $f = \Omega$ or $\omega$,
computed for $1\leq X \leq 10^9$ and $1\leq k\leq 10$.
\end{ex}

\subsection{Case where $\sqrt{q}$ or $-\sqrt{q}$ is an inverse zero.}\label{subsec_examples_realZero}

In \cite{Li2018}, Li showed the existence of a family of polynomials $M$ satisfying $m_{+}(\chi_M) >0$.
We now use some of these polynomials to obtain completely biased races between quadratic residues and non-quadratic residues.

\begin{ex}[Complete bias in case of a zero at $\tfrac{1}{2}$]
\label{Ex_sqrt(q)}

Taking $q=9$, we study polynomials with coefficients in $\mathbf{F}_{9}= \mathbf{F}_{3}[a]$ (i.e. $a$ is a generator of $\mathbf{F}_9$ over $\mathbf{F}_3$).
Let
$M= t^4 + 2t^3 + 2t + a^7$.
This polynomial is square-free and has the particularity that $m_{+}(\chi_{M}) =2$ where $\chi_{M}$ is the primitive quadratic character modulo $M$ (see \cite{Li2018}).
More precisely,
\[\mathcal{L}(u,\chi_M) = (1 - 3u)^2.\]

The polynomial $M$ has two irreducible factors of degree $2$ in $\mathbf{F}_{9}[t]$. We denote $M=M_1 M_2$, and for $i=1$, $2$, let $\chi_i$ be the character modulo $M$ induced by the character $\chi_{M_i}$.
Then for $i = 1$, $2$, one has 
\[\mathcal{L}(u,\chi_i) = (1-u)(1-u^{2}).\]
In particular, the only inverse zero of a quadratic character modulo $M$ with norm $\sqrt{9} = 3$ is the real zero $\alpha = 3$ with multiplicity $2$. 
Inserting this information in \eqref{Formula_Res_vs_NonRes} and \eqref{Formula_Res_vs_NonRes_littleomega}, we obtain
    \begin{equation*}
   \Delta_{\Omega_k}(X; M, \square,\boxtimes)  
    = \frac{(-1)^k}{\lvert \boxtimes\rvert} \Bigg\{ \frac{2 + 5^k}{2^k} \frac{3}{2} +\frac{3}{2^k} \frac{3}{4}(-1)^{X}  \Bigg\}
     + O_{M}\(\frac{4^k k^2}{\log X}\),
	\end{equation*}
	and
	    \begin{equation*}
  \Delta_{\omega_k}(X; M, \square,\boxtimes)  
    = \frac{1}{\lvert \boxtimes\rvert} \Bigg\{ \frac{2 + (-3)^k}{2^k} \frac{3}{2} +\frac{3}{2^k} \frac{3}{4}(-1)^{X}  \Bigg\}
     + O_{M}\(\frac{4^k k^2}{\log X}\).
	\end{equation*}
In each case, for each $k\geq 1$,    the limiting distribution is a sum of two Dirac deltas, symmetric with respect to the mean value.
One can observe that, in each case and for any $k\geq 2$, the constant term is larger in absolute value than the oscillating term.
We deduce that, for $k\geq 2$, 
\[\dens ((-1)^k \Delta_{\Omega_k}(\cdot;M,\square,\boxtimes) > 0)
= \dens ((-1)^k \Delta_{\omega_k}(\cdot;M,\square,\boxtimes) > 0)= 1.
\]
We say that the bias is complete.
Note that in this case, contrary to the case of prime numbers, when $k$ is odd, the function~$\Delta_{\omega_k}(\cdot;M,\square,\boxtimes)$ does not have a bias towards quadratic residues.
\end{ex}

\begin{Rk}
We note that the complete bias obtained in Example~\ref{Ex_sqrt(q)} could be one of the simplest ways to observe such a phenomenon.
Previously, in the setting of prime number races, Fiorilli \cite{Fiorilli_HighlyBiased} observed that arbitrary large biases could be obtained in the race between quadratic residues and non-quadratic residues modulo an integer with many prime factors (see also Proposition~\ref{Prop Chebyshev many factors} for a translation in our setting).
Fiorilli's large bias is due to the squares of prime numbers.
Note that over number fields, the infinity of zeros of the $L$-functions is (under the GRH) an obstruction to the existence of complete biases in prime number races with positive coefficients (see \cite[Rk.~2.5]{RS}).
The first observation of a complete bias is in \cite[Th.~1.5]{CFJ} in the context of Mazur's question on Chebyshev's bias for elliptic curves over function fields.
As in \cite{CFJ}, our complete bias is due to a  ``large rank'' i.e. a vanishing of the $L$-function at the central point.
\end{Rk}

\begin{ex}[Absence of bias in case of a zero at $\tfrac{1}{2} + i\pi$]
\label{Ex_-sqrt(q)}
Taking $q=9$, we study polynomials with coefficients in $\mathbf{F}_{9}= \mathbf{F}_{3}[a]$ (as in Example~\ref{Ex_sqrt(q)}).
Let
$M= t^3 -t$.
This polynomial is square-free and has the particularity that $m_{-}(\chi_{M}) =2$.
More precisely,
\[\mathcal{L}(u,\chi_M) = (1 + 3u)^2.\]

The polynomial $M$ has three irreducible factors of degree $1$ in $\mathbf{F}_{9}[t]$. We denote $M=M_1 M_2 M_3$, and for $i=1$, $2$, $3$ let $\chi_i$ be the character modulo $M$ induced by the character $\chi_{M_i}$.
For $i\neq j \in \lbrace 1,2,3\rbrace$ one has
\begin{equation*}
\mathcal{L}(u,\chi_i) = (1-u)^2, \quad \text{ and } \quad 
\mathcal{L}(u,\chi_i\chi_j) = (1-u)^2.
\end{equation*}
In particular, the only inverse zero of a quadratic character modulo $M$ with norm $\sqrt{9} = 3$ is the real zero $\alpha = -3$ with multiplicity $2$. 
Inserting this information into \eqref{Formula_Res_vs_NonRes} and \eqref{Formula_Res_vs_NonRes_littleomega}, we obtain
    \begin{equation*}
\Delta_{\Omega_k}(X; M, \square,\boxtimes)  
    = \frac{(-1)^k}{\lvert \boxtimes\rvert} \Bigg\{ \ \frac{7}{2^k} \frac{3}{2} + \frac{6 + 5^k}{2^k} \frac{3}{4}(-1)^{X} \Bigg\}
     + O_{M}\(\frac{3^k k^2}{\log X}\) ,
	\end{equation*}
	and 
	    \begin{equation*}
   \Delta_{\omega_k}(X; M, \square,\boxtimes)  
    = \frac{1}{\lvert \boxtimes\rvert} \Bigg\{ \ \frac{7}{2^k} \frac{3}{2} + \frac{6 + (-3)^k}{2^k} \frac{3}{4}(-1)^{X} \Bigg\}
     + O_{M}\(\frac{3^k k^2}{\log X}\). 
	\end{equation*}
In each case, the limiting distribution associated to the function for each fixed $k$ is again a sum of two Dirac deltas, symmetric with respect to the mean value.
We observe that for $k =1$ the constant term dominates the sign of the function so there are complete biases $\dens(\Delta_{\Omega_1}(X;M,\square,\boxtimes) > 0) = 0$ and $\dens(\Delta_{\omega_1}(X;M,\square,\boxtimes) > 0) = 1$.
For $k\geq 2$, the two Dirac deltas are each on one side of zero, hence
\[
\dens(\Delta_{\Omega_k}(\cdot;M,\square,\boxtimes) > 0) = \dens(\Delta_{\omega_k}(\cdot;M,\square,\boxtimes) > 0)= \frac{1}{2},
\]
the race is unbiased.
\end{ex}

The examples in this section illustrate the following more general result, we can always find unbiased and completely biased races.

\begin{prop}
    \label{Prop using Honda Tate}
    Let $\mathbf{F}_{q}$ be a finite field of odd characteristic, then there exists $M_{\frac12}\in \mathbf{F}_{q}[t]$ such that,
    for $f= \Omega$ or $\omega$, and for $k$ large enough, one has
    $$ \dens(\Delta_{f_k}(\cdot;M_{\frac12},\square,\boxtimes)>0) = \frac12.$$
    Moreover, if $q >3$, there exists $M_{1}\in \mathbf{F}_{q}[t]$ such that,
    for $f= \Omega$ or $\omega$, and for $k$ large enough, one has
    $$ \dens((-1)^k\Delta_{f_k}(\cdot;M_1,\square,\boxtimes)>0) = 1.$$
\end{prop}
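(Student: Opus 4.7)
By Theorem~\ref{Th_Difference_k_general_deg<X}, both halves of the proposition reduce to choosing $M$ so that the quadratic $L$-functions modulo $M$ have a unique inverse zero of strictly largest multiplicity, and then reading off the sign behaviour of $\Delta_{f_k}$ from \eqref{Formula_Res_vs_NonRes}--\eqref{Formula_Res_vs_NonRes_littleomega}. The Honda--Tate theorem provides the required Weil $q$-number (prescribed to be $\sqrt q$, $-\sqrt q$, or a non-real point according to need) as the Frobenius of a simple abelian variety over $\mathbf{F}_q$, which is realised as an isogeny factor of a hyperelliptic Jacobian $y^2 = M(t)$ with sufficiently large multiplicity by iterated fibre products in the style of~\cite{Li2018}.

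\textbf{Construction of $M_1$.} Take the dominant Weil number to be $\sqrt q$ with multiplicity $n$ chosen so large that $n - \tfrac12$ exceeds every other inverse zero multiplicity of every quadratic character modulo $M_1$. When $q$ is an odd square, supersingular elliptic curves of trace $2\sqrt q$ supply the $L$-function factor $(1 - u\sqrt q)^n$ by isogeny, as in Example~\ref{Ex_sqrt(q)}. When $q$ is not a square, Galois symmetry on $\mathcal L(u,\chi_{M_1}) \in \mathbf{Z}[u]$ forces $m_+(\chi_{M_1}) = m_-(\chi_{M_1})$, but Honda--Tate still provides a simple supersingular abelian surface $A/\mathbf{F}_q$ with characteristic polynomial $(T^2-q)^n$, realisable as a factor of a hyperelliptic Jacobian. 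Substituting the resulting data into \eqref{Formula_Res_vs_NonRes}, the leading coefficient of $(m_+(\chi_{M_1}) + \tfrac12)^k$ is
\[
\frac{\sqrt q}{\sqrt q -1} + (-1)^X \frac{\sqrt q}{\sqrt q +1},
\]
which is strictly positive for every $X$ because $\frac{\sqrt q}{\sqrt q -1} > \frac{\sqrt q}{\sqrt q +1} >0$ whenever $q>1$. For $k$ large this term dominates every other contribution, so $(-1)^k \Delta_{\Omega_k}(X;M_1,\square,\boxtimes) > 0$ for every $X$, giving $\dens = 1$. The same calculation applied to \eqref{Formula_Res_vs_NonRes_littleomega} with $m_\pm - \tfrac12 > 0$ handles $\Delta_{\omega_k}$.

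\textbf{Construction of $M_{1/2}$.} The goal is now a dominant term whose limiting distribution is evenly split about~$0$. For $q$ an odd square, Example~\ref{Ex_-sqrt(q)} (extended to arbitrary odd squares via the Honda--Tate construction with Weil number $-\sqrt q$) yields $M_{1/2}$ with $m_-(\chi_{M_{1/2}}) > m_+(\chi_{M_{1/2}})$ dominating; the leading term is then proportional to $(-1)^X$, and for $k$ large the limiting distribution is a sum of two Dirac deltas on opposite sides of $0$, giving $\dens = \tfrac12$. For $q$ non-square Galois forbids $m_- > m_+$, so I instead use a supersingular elliptic curve $E/\mathbf{F}_q$ with trace $0$ (existing for every odd prime power), whose $L$-function is $1 + qu^2 = (1 - i\sqrt q\,u)(1 + i\sqrt q\,u)$; realising a high power $E^r$ inside a hyperelliptic Jacobian $y^2 = M_{1/2}(t)$ gives a non-real inverse zero $\alpha = i\sqrt q$ of multiplicity $r$ dominating all other zeros. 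The dominant term of $\Delta_{f_k}$ is then the $4$-periodic function $2 r^k \re \bigl(\tfrac{\alpha}{\alpha - 1} e^{iX\pi /2}\bigr)$, and a direct computation gives $\tfrac{\alpha}{\alpha-1} = \tfrac{-q+i\sqrt q}{q+1}$, with both real and imaginary parts nonzero. Consequently, among the four values taken over $X \bmod 4$ exactly two are strictly positive and two strictly negative; the subdominant contribution of size $O((1/2)^k)$ from $\pm\sqrt q$, together with the smaller-multiplicity non-real zeros, cannot flip these signs for $k$ sufficiently large, so $\dens = \tfrac12$.

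\textbf{Main obstacle.} The principal technical difficulty is the realisation step: Honda--Tate supplies only an abstract isogeny class, and one must exhibit an explicit square-free $M \in \mathbf{F}_q[t]$ whose hyperelliptic Jacobian $J(y^2 = M(t))$ contains the prescribed factor with strictly higher multiplicity than every other zero of every quadratic $L$-function modulo $M$. This is arranged by iterated fibre products of hyperelliptic curves in the spirit of Li~\cite{Li2018} and Examples~\ref{Ex_sqrt(q)}--\ref{Ex_-sqrt(q)}, but verifying that the subdominant zeros do not accidentally match the dominant multiplicity is the delicate part. The restriction $q>3$ for $M_1$ reflects that $\mathbf{F}_3$ admits no elliptic curve with real Frobenius eigenvalues, so the supersingular realisation must begin at the abelian surface level.
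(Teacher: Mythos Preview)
Your approach has a genuine gap in the realisation step, which you flag as the ``main obstacle'' but do not resolve. Appealing to ``iterated fibre products in the spirit of Li~\cite{Li2018}'' is not enough: Li's construction produces a quadratic character with $m_+(\chi)>0$, but gives no mechanism for making one zero's multiplicity strictly larger than \emph{every} zero of \emph{every} quadratic character modulo the resulting $M$. As your $M$ grows in degree and in number of irreducible factors, the number of non-trivial quadratic characters grows as $2^{\omega(M)}-1$, and you offer no argument to control all of their $L$-functions simultaneously; ``verifying that the subdominant zeros do not accidentally match the dominant multiplicity'' is precisely the content of the proof, not a technicality to be deferred.

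The paper's argument sidesteps this entirely by keeping $\deg M$ \emph{small}, so that the other quadratic characters are automatically harmless. For $q$ an odd square, Waterhouse \cite[Th.~4.1]{Waterhouse} gives elliptic curves $E_\pm/\mathbf F_q$ with Weil polynomial $(1\mp\sqrt q\,u)^2$; taking $M$ of degree~$3$ from a Weierstrass model, every proper divisor $D\mid M$ has $\deg D\le 2$, so $\mathcal L(u,\chi_D)$ has \emph{no} inverse zeros of norm $\sqrt q$ whatsoever. For $q$ non-square the paper uses a trace-$0$ elliptic curve (degree-$3$ model) for $M_{1/2}$ and, when $q>3$, a genus-$2$ hyperelliptic curve with Weil polynomial $(1-qu^2)^2$ (degree-$5$ model, via \cite[Th.~1.2]{HNR}) for $M_1$; proper divisors of the latter have degree $\le 4$, hence at most one conjugate pair of \emph{simple} zeros. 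In every case multiplicity~$2$ from a single curve already dominates, and no fibre-product amplification is needed. Incidentally, the restriction $q>3$ for $M_1$ comes from the genus-$2$ existence result \cite{HNR} in the non-square case, not from any failure at the elliptic-curve level over $\mathbf F_3$.
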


\begin{Rk}
It is interesting to note that in the case of an extreme bias, the bias for the function $\Delta_{\omega_k}(\cdot;M,\square,\boxtimes)$ changes direction with the parity of $k$, whereas in the case of integers \cite{Meng2017} the analog function has a bias towards squares independently of the parity of $k$.
\end{Rk}

\begin{proof}
In the case where $q$ is a square,
this result is a consequence of Honda--Tate theorem in the case of elliptic curve: by \cite[Th.~4.1]{Waterhouse} there exist two elliptic curves $E_{\pm}$ on $\mathbf{F}_{q}$ whose Weil polynomial is $P_{\pm}(u) = (1 \mp \sqrt{q}u)^2$.
If $q >3$ is not a square, by \cite[Th.~4.1]{Waterhouse}, there exists one elliptic curve $E_{\frac12}$ on $\mathbf{F}_{q}$ whose Weil polynomial is $P_{\frac12}(u) = 1 + \sqrt{q}u^2$, and by \cite[Th.~1.2]{HNR}, there exist an hyperelliptic curve $C_1$ of genus $2$ whose Weil polynomial is $P_1(u) = (1 - qu^2)^2$.

Since $q$ is odd, using Weierstrass form, each of the elliptic curve $E_{a}$, where $a = +,-$ or $\frac12$, has an affine model with equation $y^2 = M_{a}(x)$, where $M_{a} \in \mathbf{F}_q$ has degree $3$.
Similarly the hyperelliptic curve $C_1$ has an affine model with equation $y^2 = M_{1}(x)$, with $M_1 \in \mathbf{F}_q[t]$ of degree $5$. 
Then $\mathcal{L}(u,\chi_{M_{a}}) = P_{a}(u)$, for $a\in \lbrace +,-,\frac12,1\rbrace$.

For $a\in \lbrace +,-,\frac12\rbrace$, let $D$ be a strict divisor of $M_a$, then $\deg D\leq 2$ so $\mathcal{L}(u,\chi_{D})$ does not have inverse zeros of norm $\sqrt{q}$.
In the case $D$ is a strict divisor of $M_1$, then $\mathcal{L}(u,\chi_{D}) \in \mathbf{Z}[u]$ has at most two inverse zeros of norm $\sqrt{q}$ that are conjugate, in particular, its inverse zeros are simple.

Thus the case where $q$ is a square follows in the same way as in Examples~\ref{Ex_sqrt(q)} and~\ref{Ex_-sqrt(q)}.

In the case where $q$ is not a square, using the information above in~\eqref{Formula_Res_vs_NonRes} we obtain, for $f = \Omega$ or $\omega$,
\begin{multline*}
    \Delta_{f_k}(X; M_{\frac12}, \square,\boxtimes) 
    = \frac{(-1)^k}{\lvert \boxtimes\rvert} \Bigg\{  \( -\tfrac{\epsilon_{f}}{2}\)^k \frac{\sqrt{q}}{\sqrt{q}-1} +\(-\tfrac{\epsilon_f }{2}\)^k \frac{\sqrt{q}}{\sqrt{q} + 1}(-1)^{X} \\
    +2\re\left(  \frac{i\sqrt{q}}{i\sqrt{q} -1} e^{iX\frac{\pi}2}\right)\Bigg\} + O_k\((\log X)^{-1}\), 
	\end{multline*}
where $\epsilon_{\Omega} = -1$, $\epsilon_{\omega} = 1$.
The periodic part inside the brackets takes $4$ different values : $$2q\(\( -\tfrac{\epsilon_{f}}{2}\)^k\tfrac{1}{q-1} \pm \tfrac{1}{q+1}\),  \quad 2\sqrt{q}\(\( -\tfrac{\epsilon_{f}}{2}\)^k\tfrac{1}{q-1} \pm \tfrac{1}{q+1}\), $$
exactly $2$ of them are positive and $2$ are negative when $q>3$ or $k\geq 2$. So the race is unbiased.

Similarly for $M_1$ we have
\begin{multline*}
    \Delta_{f_k}(X; M_1, \square,\boxtimes)  
    = \frac{(-1)^k}{\lvert \boxtimes\rvert} \Bigg\{  \(2 -\epsilon_{f}\tfrac{1}{2}\)^k \frac{\sqrt{q}}{\sqrt{q}-1} +\(2 -\epsilon_f \tfrac{1}{2}\)^k \frac{\sqrt{q}}{\sqrt{q} + 1}(-1)^{X} \\
    +2\re\left( m_1 \frac{\alpha_1}{\alpha_1 -1} e^{iX\gamma_{1}}\right)   + O_k\((\log X)^{-1}\) \Bigg\},
	\end{multline*}
	where $m_1 = 0$ or $1$ and $\alpha_1 = \sqrt{q}e^{i\gamma_1}$ is an inverse zero of $\mathcal{L}(u,\chi_{D})$, for $D$ a strict divisor of $M_1$.
	We observe that the constant term dominates for $k$ large enough; one has an extreme bias: 
$\dens((-1)^k \Delta_{f_k}(X;M,\square,\boxtimes) > 0) = 1,$ with different directions of the bias according to the parity of $k$.
\end{proof}

\section{Limit behaviours}\label{subsec central limit}

In this section we study the limit behaviour of the measures $\mu_{M,f_k}$, for $f= \Omega$ or $\omega$, as $k$ or $\deg M$ gets large.
We present the results by increasing strength of assumption needed.

\subsection{Unconditional results as $k$ grows}
\label{subsec k limit}

First we focus on $k$ getting large while the modulus $M$ is fixed.
We obtain the following unconditional result (see also Remark~\ref{Rk_on_Th_deg<n}.\ref{Item_largestMult}) regarding the $k$-limit of the limiting distributions.

\begin{prop}\label{Prop k limit}
        Fix $M\in \mathbf{F}_{q}[t]$, let $f= \Omega$ or $\omega$, and $\epsilon_{\Omega} = -1$ and $\epsilon_{\omega} = 1$.
        We define
        \[m_{f,\max} = \max_{\chi, j}\lbrace m_{\pm}(\chi) -\epsilon_{f} \frac{1}{2}, m_j(\chi)\rbrace,\]
        where the maximum is taken over all  non-trivial quadratic characters $\chi$ modulo $M$.
        Then, as $k\rightarrow\infty$, the limiting distribution of
        \begin{align*}
    &\Delta_{f_k}^{\mathrm{norm}}(\cdot;M) := \frac{(-1)^k\lvert \boxtimes\rvert \sqrt{q-1}}{m_{f,\max}^{k}\sqrt{q}}\Delta_{f_k}(\cdot;M,\square,\boxtimes)
\end{align*}
converges weakly to some probability measures $\mu_{f,M}$, depending only on the set of zeros of maximal multiplicity.
In particular,
\begin{enumerate}[label = \roman*)]
    \item\label{Item max integer}\emph{(Expected generic case)} if $m_{\Omega,\max}$ is an integer and if the set of zeros of maximal multiplicity generates a symmetric sub-torus, then $\mu_{M,\Omega}=\mu_{M,\omega}$ is symmetric, so the bias dissipates as $k$ gets large;
    \item\label{Item max is m+} if for some non-trivial quadratic character $\chi_1$ modulo $M$ one has
    \[\max_{\chi}\lbrace m_{-}(\chi) \rbrace < m_{+}(\chi_1) = m_{\Omega,\max} - \frac{1}{2}\] 
    then $\mu_{M,\Omega}$ is a Dirac delta, so the bias tends to be extreme as $k$ gets large.
\end{enumerate}
\end{prop}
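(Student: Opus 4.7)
The plan is to read the result off the asymptotic formulas \eqref{Formula_Res_vs_NonRes} and \eqref{Formula_Res_vs_NonRes_littleomega} by carefully tracking which terms dominate as $k$ grows. Multiplying through by the normalization $(-1)^{k}\lvert\boxtimes\rvert\sqrt{q-1}/(m_{f,\max}^{k}\sqrt{q})$ turns the main term into a finite sum of contributions of the form $(a/m_{f,\max})^{k}$ times an oscillating-in-$X$ factor, where $a$ runs over the quantities $m_{+}(\chi)-\epsilon_{f}/2$, $m_{-}(\chi)-\epsilon_{f}/2$, or $m_{j}(\chi)$. By the very definition of $m_{f,\max}$ every such ratio lies in $[-1,1]$, with modulus $1$ precisely at the zeros of maximal multiplicity.

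The first step is to let $k\to\infty$ pointwise in $X$: every term with $|a|<m_{f,\max}$ dies exponentially, and the surviving terms assemble into an almost periodic function $F_{\infty}(X)$ of the shape \eqref{Almost periodic function}, whose coefficients and frequencies are determined entirely by the maximal zeros. Proposition~\ref{Prop_limitingDist} then furnishes $F_{\infty}$ with a limiting distribution, which I take to be $\mu_{f,M}$. The second step is to promote this pointwise convergence to weak convergence of the normalized limiting distributions $\mu_{M,f_{k}}^{\mathrm{norm}}$ to $\mu_{f,M}$. I would do this at the level of Fourier transforms: by Lemma~\ref{Th_KW} each $\mu_{M,f_{k}}^{\mathrm{norm}}$ has Fourier transform given by a Haar integral over the fixed sub-torus generated by $\pi$ and the $\gamma_{j}(\chi)$'s, with integrand continuous in the (normalized) coefficients. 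Since those coefficients converge as $k\to\infty$, dominated convergence on a compact torus yields pointwise convergence of Fourier transforms, and hence weak convergence by L\'evy's theorem.

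Parts \ref{Item max integer} and \ref{Item max is m+} then amount to reading off $F_{\infty}$. For \ref{Item max integer}: the quantities $m_{\pm}(\chi)+\tfrac12$ are half-integers, so when $m_{\Omega,\max}$ is an integer only non-real zeros with $m_{j}(\chi)=m_{\Omega,\max}$ achieve the maximum; the same set of zeros is selected for $f=\omega$, forcing $\mu_{M,\Omega}=\mu_{M,\omega}$. Proposition~\ref{Prop_limitingDist}\ref{Item_symmetry} applied to the assumed symmetric sub-torus generated by the corresponding $\gamma_{j}$'s makes $\mu_{f,M}$ symmetric about $0$, while Proposition~\ref{Prop_limitingDist}\ref{Item_continuous} rules out a point mass at $0$ as soon as one of the surviving $\gamma_{j}$ is irrational, so $\dens(\Delta_{f_{k}}^{\mathrm{norm}}>0)\to\tfrac12$. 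For \ref{Item max is m+}: the hypothesis forces $m_{-}(\chi)+\tfrac12<m_{\Omega,\max}$ for every $\chi$; any $m_{j}(\chi)$ is an integer $\leq m_{\Omega,\max}$, hence $\leq m_{+}(\chi_{1})<m_{\Omega,\max}$; thus only the non-oscillating contribution from the real zeros at $+\sqrt{q}$ survives, $F_{\infty}$ reduces to a strictly positive constant, and $\mu_{M,\Omega}$ is a Dirac delta at that point, yielding the claimed complete bias.

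The main obstacle is the weak-convergence step, since it interchanges the two limits $X\to\infty$ and $k\to\infty$. Degenerate cases will need extra bookkeeping: when all surviving $\gamma_{j}$ happen to be rational multiples of $\pi$ the limit $\mu_{f,M}$ is already a sum of Dirac deltas, and when no non-real $\gamma_{j}$ survives at all one must verify separately that $\mu_{f,M}(\{0\})=0$ (or locate the atom at $0$) before reading the bias off of $\mu_{f,M}$ alone.
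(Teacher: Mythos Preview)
Your proposal is correct and follows essentially the same route as the paper: write the Fourier transform of each $\mu_{M,f_k}^{\mathrm{norm}}$ as a Haar integral over the fixed sub-torus, pass to the limit in $k$ by dominated convergence, and invoke L\'evy's continuity theorem; the case analysis for \ref{Item max integer} and \ref{Item max is m+} is then exactly the ``which ratios $a/m_{f,\max}$ have modulus $1$'' bookkeeping you describe. Your extra care about the atom at $0$ and the rational-$\gamma_j$ degeneracies is warranted---the paper's proof of \ref{Item max integer} stops at ``$\varphi_{M,f}$ is even'' without addressing $\mu_{f,M}(\{0\})$, so your caveats there are an improvement rather than a gap.
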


\begin{Rk}
Note that in the generic case, we expect LI to be satisfied and to have $m_{\Omega,\max}=1$ an integer. So, for most $M \in \mathbf{F}_{q}[t]$ square-free, the bias should dissipate in the race between polynomials with $k$ irreducible factors in the quadratic residues and non-quadratic residues modulo~$M$.
\end{Rk}

\begin{proof}
Let $\varphi_{M,f_k}$ be the Fourier transform of the limiting distribution of $\Delta_{f_k}^{\mathrm{norm}}(\cdot;M)$.
One has
\begin{multline*}
\varphi_{M,f_k}(\xi) 
   = \exp\left(-i\xi \frac{\sqrt{q-1} \sum_{\chi} \left( m_{+}(\chi)  - \epsilon_f \frac{1}{2}  \right)^k}{m_{f,\max}^{k}(\sqrt{q} - 1)} \right) \\ \times 
    \int_{A}\exp\Bigg\lbrace -i\xi\Bigg( \frac{\sqrt{q-1}\sum_{\chi} \left( m_{-}(\chi)  - \epsilon_f \frac{1}{2}  \right)^k}{m_{f,\max}^{k}(\sqrt{q} +1)}(-1)^{a_1} 
  \\+ \sqrt{\frac{q-1}{q}}\sum_{j=2}^{N} 2\frac{m_{j}^k}{m_{\max}^k}\re\left(\frac{\sqrt{q}e^{i\gamma_j}}{\sqrt{q}e^{i\gamma_j} -1}e^{ia_j}\right)  \Bigg)\Bigg\rbrace \diff\omega_{A}(a),
\end{multline*}
where we write the ordered list of inverse-zeros with multiplicities
\begin{equation}\label{Def zeros multiplicities}
   \lbrace (\gamma_{2},m_2), \ldots, (\gamma_{N},m_N)\rbrace = \bigcup\limits_{\substack{\chi \bmod M \\ \chi^{2} = \chi_0 \\ \chi \neq \chi_0}}\lbrace (\gamma,m)\in (0,\pi)\times\mathbf{N}_{>0} : L(\tfrac{1}{2} + i\gamma, \chi) = 0 \text{ with multiplicity } m  \rbrace, 
\end{equation}
and $A$ is the closure of the $1$-parameter group $\lbrace y(\pi,\gamma_2,\ldots,\gamma_N) : y \in \mathbf{Z} \rbrace/2\pi\mathbf{Z}^{N}$.

Now, by dominated convergence theorem, there are four cases according to which zeros have maximal multiplicity. In each case it is easy to see that the limit function $\varphi_{M,f}$ is indeed the Fourier transform of a measure $\mu_{M,f}$, and the conclusion follows by L\'evy's Continuity Theorem.

Suppose that $m_{f,\max}$ is an integer, i.e. the zeros of maximal order are not real.
Up to reordering, we can assume that the first $d$ zeros in \eqref{Def zeros multiplicities} have maximal multiplicity: $m_2 = \ldots = m_d = m_{\max} > \max_{j>d}\lbrace m_j \rbrace$.
We have for $f=\Omega$ or $\omega$ and for every $\xi \in\mathbf{R}$,
\begin{align*}
    \varphi_{M,f}(\xi) := \lim_{k\rightarrow\infty} \varphi_{M,f_k}(\xi) = \int_{A(\max)}
    \exp\left(-i\xi\left( \sqrt{\frac{q-1}{q}}\sum_{j=2}^{d} 2\re\left(\frac{\alpha_j}{\alpha_j -1}e^{ia_j}\right)  \right)\right) \diff\omega_{A(\max)}(a),
\end{align*}
where $A(\max)$ is the closure of the $1$-parameter group $\lbrace y(\gamma_2,\ldots,\gamma_d) : y \in \mathbf{Z} \rbrace/2\pi\mathbf{Z}^{d-1}$.
This follows from the fact that the projection $A\rightarrow A(\max)$ induces a bijection between a sub-torus of $A$ and $A(\max)$, then, by uniqueness of the normalized Haar measure, the measure induced on the sub-torus by the normalized Haar measure of $A$ is exactly the normalized Haar measure of $A(\max)$.

By Proposition~\ref{Prop_limitingDist}.\ref{Item_symmetry},
the function $\varphi_{M,f}$ is even if the sub-torus $A(\max)$ is symmetric. This concludes the proof of Proposition~\ref{Prop k limit}.\ref{Item max integer}.
Note that in the case $m_{\Omega,\max}$ is an integer, we have $m_{\omega,\max} = m_{\Omega,\max}$ and the zeros of maximal order are the same for the two functions $\Delta_{\Omega_k}$ and $\Delta_{\omega_k}$. In particular $\mu_{M,\Omega} = \mu_{M,\omega}$.

In the case the zeros of maximal order are real, we have the three following possibilities.
\begin{enumerate}[label = \roman*)]\addtocounter{enumi}{1}
    \item The maximum $m_{f,\max}$ is reached only by $m_{+}(\chi_1), \ldots, m_{+}(\chi_d)$, for $\chi_1,\ldots,\chi_d$ non-trivial quadratic characters modulo $M$,
    then for every $\xi \in\mathbf{R}$, we have
\begin{align*}
    \varphi_{M,f}(\xi) := \lim_{k\rightarrow\infty} \varphi_{M,f_k}(\xi) = \exp\left(-i\xi \frac{d \sqrt{q-1}}{\sqrt{q} - 1} \right). 
\end{align*}
This is the Fourier transform of a Dirac delta at a positive value, thus \begin{equation*}
    \lim_{k\rightarrow\infty}\dens((-1)^k\Delta_{f_k}(X;M,\square,\boxtimes) >0) = 1.
\end{equation*}
\item The maximum $m_{f,\max}$ is reached only by $m_{-}(\chi_1), \ldots, m_{-}(\chi_d)$, for $\chi_1,\ldots,\chi_d$ non-trivial quadratic characters modulo $M$,
    then for every $\xi \in\mathbf{R}$, we have
\begin{align*}
    \varphi_{M,f}(\xi) := \lim_{k\rightarrow\infty} \varphi_{M,f_k}(\xi) = \cos\left(\xi  \frac{d\sqrt{q-1}}{\sqrt{q} +1}\right). 
\end{align*}
This is the Fourier transform of a combination of two half-Dirac deltas symmetric with respect to~$0$, thus \begin{equation*}
    \lim_{k\rightarrow\infty}\dens(\Delta_{f_k}(X;M,\square,\boxtimes) >0) = \frac{1}{2}.
\end{equation*}
\item The maximum $m_{f,\max}$ is reached by $m_{+}(\chi_1), \ldots, m_{+}(\chi_d)$ and by $m_{-}({\chi'}_1), \ldots, m_{-}({\chi'}_{d'})$ for $\chi_1,\ldots,\chi_d, {\chi'}_1, \ldots, {\chi'}_{d'}$ non-trivial quadratic characters modulo $M$,
    then for every $\xi \in\mathbf{R}$, we have
\begin{align*}
    \varphi_{M,f}(\xi) := \lim_{k\rightarrow\infty} \varphi_{M,f_k}(\xi) = \exp\left(-i\xi \frac{d \sqrt{q-1}}{\sqrt{q} - 1} \right)\cos\left(\xi  \frac{d' \sqrt{q-1}}{\sqrt{q} +1}\right). 
\end{align*}
This is the Fourier transform of a combination of two half-Dirac deltas  symmetric with respect to~$ \frac{d \sqrt{q-1}}{\sqrt{q} - 1}$. 
Thus the limit measure has a complete bias, or is unbiased, or its bias is not well defined depending on whether $\frac{d}{\sqrt{q} -1} > \frac{d'}{\sqrt{q} + 1}$, or $<$, or $=$.
\end{enumerate}
Finally, note that when $m_{\Omega,\max} \notin \mathbf{N}$, the set of zeros of maximal real part for $\Delta_{\Omega_k}$ and $\Delta_{\omega_k}$ can differ, they coincide if $m_{\omega,\max} \notin \mathbf{N}$.
\end{proof}

\subsection{Existence of extreme biases for moduli with many irreducible factors}
\label{subsec extreme bias M limit}

We now keep $k$ fixed and vary the modulus $M$. Following the philosophy of \cite[Th.~1.2]{Fiorilli_HighlyBiased}, we obtain that, as the number of irreducible factors of $M$ increases, extreme biases appear in the race between quadratic and non-quadratic residues modulo $M$.
Thus $1$ is in the closure of the values of the densities in Theorem~\ref{Th central limit for M under LI}.
As in the work of Fiorilli, the full strength of (LI\ding{70}) is not necessary here.

\begin{prop}\label{Prop Chebyshev many factors}
Let $\lbrace M \rbrace$ be a sequence of polynomials in $\mathbf{F}_{q}[t]$ 
such that
the multi-set $\mathcal{Z}(M) = \bigcup\limits_{\substack{\chi \bmod M \\ \chi^{2} = \chi_0 , \chi \neq \chi_0}}\lbrace \gamma \in [0,\pi] :  L(\frac{1}{2} + i\gamma, \chi) = 0  \rbrace$ is linearly independent of $\pi$ over $\mathbf{Q}$. Assume also that the multiplicities of the zeros are bounded : there exists $B>0$ such that  for each $M$, for each $\gamma \in \mathcal{Z}(M)$ one has
$$\sum_{\substack{\chi \bmod M \\ \chi^{2} = \chi_0 , \chi \neq \chi_0}} m_{\gamma}(\chi) \leq B.$$

Then, for $f=\Omega$ or $\omega$, as $\omega(M)\rightarrow\infty$, one has
\begin{equation*}
    \dens((\epsilon_f)^k\Delta_{f_k}(\cdot;M,\square,\boxtimes) >0) \geq 1 - O\left(  \frac{(2B)^{2k} q \deg M}{2^{\omega(M)}} \right),
\end{equation*}
where $\epsilon_{\Omega} = -1$ and $\epsilon_{\omega} = 1$.
\end{prop}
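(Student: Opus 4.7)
The plan is a Chebyshev-type second-moment argument along the lines of \cite[Th.~1.2]{Fiorilli_HighlyBiased}, isolating the $(-1)^X$ periodic oscillation from the oscillations at non-real frequencies. I would first unpack the hypothesis: linear independence of $\mathcal{Z}(M)\cup\{\pi\}$ over $\mathbf{Q}$ forces $0,\pi\notin\mathcal{Z}(M)$, hence $m_+(\chi)=m_-(\chi)=0$ for every non-trivial quadratic character $\chi$ modulo $M$. Combined with this linear independence, Proposition~\ref{Prop_limitingDist}.\ref{Item_continuous} produces a continuous limiting distribution $\mu_{M,f_k}$ for $(\epsilon_f)^k\Delta_{f_k}(\cdot;M,\square,\boxtimes)$, so $\dens((\epsilon_f)^k\Delta_{f_k}>0)=\mu_{M,f_k}((0,\infty))$.

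The key observation is that even with $m_\pm(\chi)=0$, the shift $\delta(\chi^2)/2=1/2$ in \eqref{Formula_Res_vs_NonRes}--\eqref{Formula_Res_vs_NonRes_littleomega} produces both a nonzero mean and a nonzero $(-1)^X$ oscillation:
\[
\mathbf{E}\mu_{M,f_k}=\frac{2^{\omega(M)}-1}{|\boxtimes|\cdot 2^k}\cdot\frac{\sqrt{q}}{\sqrt{q}-1},\qquad |c_1|=\frac{2^{\omega(M)}-1}{|\boxtimes|\cdot 2^k}\cdot\frac{\sqrt{q}}{\sqrt{q}+1}
\]
(identical for $f=\Omega$ and $f=\omega$, since in the $\omega$-case the sign $(-1/2)^k$ combines with $(\epsilon_\omega)^k=1$ to reproduce the $\Omega$-case). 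Crucially $|c_1|/\mathbf{E}\mu_{M,f_k}=(\sqrt{q}-1)/(\sqrt{q}+1)<1$, so the ``effective positive lower bound'' $\mathbf{E}\mu_{M,f_k}-|c_1|=2\,\mathbf{E}\mu_{M,f_k}/(\sqrt{q}+1)$ is positive but a factor $\sqrt{q}+1$ smaller than the nominal mean. The remaining non-periodic variance comes from the non-real inverse zeros of norm $\sqrt{q}$, namely
\[
V:=\frac{2}{|\boxtimes|^2}\sum_{\alpha_j\neq\pm\sqrt{q}}\Bigl|\sum_\chi m_j(\chi)^k\frac{\alpha_j}{\alpha_j-1}\Bigr|^2,
\]
which I would bound by combining $|\alpha_j/(\alpha_j-1)|\leq\sqrt{q}/(\sqrt{q}-1)$, the multiplicity hypothesis in the form $\sum_\chi m_j(\chi)^k\leq B^{k-1}\sum_\chi m_j(\chi)\leq B^k$, and the count of at most $(2^{\omega(M)}-1)(\deg M-1)$ distinct non-real inverse zeros of norm $\sqrt{q}$ summed across all non-trivial quadratic characters, yielding $V\ll q\,B^{2k}\,2^{\omega(M)}\deg M/(\phi(M)^2(\sqrt{q}-1)^2)$.

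Finally, the event $\{(\epsilon_f)^k\Delta_{f_k}(X)\leq 0\}$ is contained in the event that the non-periodic oscillations exceed $\mathbf{E}\mu_{M,f_k}-|c_1|$ in absolute value, so Chebyshev's inequality gives
\[
\mu_{M,f_k}((-\infty,0])\leq\frac{V}{(\mathbf{E}\mu_{M,f_k}-|c_1|)^2}=\frac{V(\sqrt{q}+1)^2}{4(\mathbf{E}\mu_{M,f_k})^2}\ll\frac{(2B)^{2k}\,q\,\deg M}{2^{\omega(M)}},
\]
where the factor $q$ arises from $(\sqrt{q}+1)^2$ in the denominator $(\mathbf{E}\mu_{M,f_k}-|c_1|)^{-2}$; this is the announced bound. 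The main obstacle is noticing that the $(-1)^X$ oscillation does \emph{not} vanish under the hypothesis (because of the $\delta(\chi^2)/2$ shift), so a naive Chebyshev bound against the full variance would be trivial: one must first subtract off the explicit $(-1)^X$ contribution, and the reduction from $\mathbf{E}\mu_{M,f_k}$ to the smaller lower bound $\mathbf{E}\mu_{M,f_k}-|c_1|$ is precisely where the $q$-factor in the statement originates.
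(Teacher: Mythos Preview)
Your proposal is correct and follows essentially the same approach as the paper's proof: both separate the $(-1)^X$ oscillation from the contribution of the non-real zeros (your decomposition into $\mathbf{E}+c_1(-1)^X+\text{rest}$ is exactly the paper's convolution $\mu_{M,f_k}=D_{M,f_k}\ast\nu_{M,k}$, made possible by the linear independence of $\pi$ from $\mathcal{Z}(M)$), identify the smaller of the two Dirac masses as the threshold $\mathbf{E}-|c_1|=\frac{2(2^{\omega(M)}-1)}{|\boxtimes|2^k}\cdot\frac{\sqrt{q}}{q-1}$, bound the variance of the non-periodic part by the same count of $\asymp 2^{\omega(M)}\deg M$ zeros times the multiplicity bound $B^k$, and finish with Chebyshev's inequality. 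Your observation that the $(-1)^X$ term survives (forcing the use of $\mathbf{E}-|c_1|$ rather than $\mathbf{E}$, and producing the factor $(\sqrt{q}+1)^2\leq 4q$) is precisely the ``careful about the influence of $\pi$'' caveat the paper flags.
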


\begin{proof}
The proof follows the idea of \cite[Th.~1.2]{Fiorilli_HighlyBiased}, \cite[Cor.~5.8]{Devin2018}, using Chebyshev's inequality (e.g. \cite[(5.32)]{Billingsley}). However, unlike the results in loc. cit. that use a limiting density for a function over $\mathbf{R}$, we need to be careful about the influence of $\pi$.

Thanks to the hypothesis of linear independence applied to \eqref{Formula_Res_vs_NonRes}, we have that $\mu_{M,f_k}$ is the convolution of two probability measures: $\mu_{M,f_k} = D_{M,f_k} \ast \nu_{M,k}$,
where $D_{M,f_k}$ is the combination of two half-Dirac deltas at $\frac{2(2^{\omega(M)} -1)}{\lvert \boxtimes\rvert}  \(\frac{\epsilon_f}{2}\)^k \frac{\sqrt{q}}{q-1}$  and $\frac{2(2^{\omega(M)} -1)}{\lvert \boxtimes\rvert}  \(\frac{\epsilon_f}{2}\)^k \frac{q}{q-1}$, with $\epsilon_{\Omega} = -1$ and $\epsilon_{\omega} = 1$, 
while $\nu_{M,k}$ has mean value $0$ and variance
\begin{align*}
    \Var(\nu_{M,k}) = \frac{1}{\lvert \boxtimes\rvert^2}  \sum_{\gamma \in \mathcal{Z}(M)}\Bigg( \sum_{\substack{\chi \bmod M \\ \chi^2 = \chi_0 , \chi\neq \chi_0}} m_{\gamma}^{k}(\chi) \frac{\lvert\sqrt{q}e^{i\gamma}\rvert}{\lvert\sqrt{q}e^{i\gamma}-1\rvert}\Bigg)^2 
    \ll \frac{B^{2k}  2^{\omega(M)} \deg M}{\lvert \boxtimes\rvert^2}.
\end{align*}
To obtain this bound, note that there are $2^{\omega(M)} -1$ quadratic characters modulo $M$ (see e.g. \cite[Prop.~1.6]{Rosen2002}) and for each of them the associated Dirichlet~$L$-function is of degree smaller than $\deg M$. (Note also that $\nu_{M,k}$ is independent of $f=\Omega$ or $\omega$.)
Thus, by Proposition~\ref{Prop_limitingDist}.\ref{Item_continuous} and Chebyshev's inequality,
\begin{align*}
    \dens\left((\epsilon_f)^k\Delta_{f_k}(X;M,\square,\boxtimes) >0\right) &\geq \nu_{M,k}\left(\bigg(  \frac{-\sqrt{q}}{(q-1)}\frac{(2^{\omega(M)} -1)}{ 2^{k-1}\lvert \boxtimes\rvert} , \infty\bigg)\right) \\
    &\geq 1 - O\left( \frac{B^{2k}  2^{\omega(M)} \deg M}{\lvert \boxtimes\rvert^2}  \left(\frac{\sqrt{q}}{(q-1)}  \frac{(2^{\omega(M)} -1)}{ 2^{k-1}\lvert \boxtimes\rvert} \right)^{-2} \right) 
\end{align*}
which concludes the proof.
\end{proof}

\subsection{Limit behaviour for moduli satisfying the linear independence}
\label{subsec M limit under LI}

In this section, following~\cite[Sec.~3]{Fiorilli_HighlyBiased}, we generalize \cite[Th.~6.2]{Cha2008} on the central limit behaviour of the measure $\mu_{M,f_k}$ for $f=\Omega$ or $\omega$.
In particular we prove Theorem~\ref{Th central limit for M under LI}; under (LI\ding{70}), the bias can approach any value in $[\tfrac12 ,1]$ as the degree of $M$ gets large.
We already proved that $1$ can be approached by the values of the bias without assuming (LI\ding{70}) in Proposition~\ref{Prop Chebyshev many factors}, thus it remains to prove Theorem~\ref{Th central limit for M under LI} for the interval $[\frac12,1)$.

As noted in Section~\ref{subsec extreme bias M limit} when using Chebyshev's inequality, assuming enough linear independence of the zeros, the distribution $\mu_{M,f_k}$ is well described by the data
\begin{equation*}
    B(M) :=\frac{\lvert \mathbf{E}\mu_{M,f_k}\rvert} {\sqrt{\Var(\nu_{M,k})}}= \frac{(2^{\omega(M)} -1)\sqrt{q}}{2^k(\sqrt{q} -1)}\Bigg(  \sum_{\substack{\chi \bmod M \\ \chi^{2} = \chi_0 \\ \chi \neq \chi_0}}\sum_{j}^{d_{\chi}} \left\lvert \frac{\alpha_j{\chi}}{\alpha_j{\chi} -1} \right\rvert^2  \Bigg)^{-1/2}
\end{equation*}
where $\nu_{M,k}$ is as defined in the proof of Proposition~\ref{Prop Chebyshev many factors}.
By \cite[(44)]{Cha2008} and \cite[Prop.~6.4]{Cha2008}, we have for every non-trivial quadratic character~$\chi$ modulo~$M$,
\begin{align*}
     \sum_{j=1}^{d_{\chi}}\left\lvert\frac{\alpha_j(\chi)}{\alpha_{j}(\chi)-1}\right\rvert^2  = \frac{q}{q-1}(\deg M^*(\chi) -2) + O(\log (\deg M^*(\chi) +1)),
\end{align*}
where $M^*(\chi)$ is the modulus of the primitive character that induces $\chi$.
Note that the sum is empty if $\deg M^*(\chi) \leq 2$.
Thus, summing over the non-trivial quadratic characters we have
\begin{align}\label{bound IM}
     I(M) :=\sum_{\substack{\chi \bmod M \\ \chi^{2} = \chi_0 \\ \chi \neq \chi_0}}
    \sum_{j=1}^{d_{\chi}}  \left\lvert \frac{\alpha_j(\chi)}{\alpha_{j}(\chi)-1}\right\rvert^2
    &= \frac{q}{q-1}\sum_{\substack{D\mid M' \\ D \neq 1}} (\deg D -2) + O(2^{\omega(M)} \log (\deg M' +1))\nonumber\\
    &= \frac{q}{q-1}(2^{\omega(M)} -1) \frac{\deg M' -4}{2} + O(2^{\omega(M)} \log (\deg M' +1),
\end{align}
where $M'$ is the largest square-free divisor of $M$.
Thus, if $\deg M'>4$,
\begin{equation*}
     B(M)= 2^{-k}\frac{\sqrt{2(q-1) (\tau(M') -1)}}{(\sqrt{q} -1)\sqrt{\deg M' -4}} \left( 1 + O\left(\frac{\log(\deg M') }{\deg M'}\right)\right),
\end{equation*}
where $\tau$ counts the number of divisors.
We show that, as $\deg M' \rightarrow\infty$, $B(M)$ can approach any non-negative real number.

\begin{lem}\label{Lem any possible value}
For any fixed $0\leq c <\infty$, there exists a sequence of square-free monic polynomials $M_n \in \mathbf{F}_q[t]$
such that
\begin{equation*}
    \deg M_n \rightarrow\infty \text{ and } \tau(M_n) = c\deg(M_n) + O(1).
    \end{equation*}
\end{lem}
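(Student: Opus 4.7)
The plan is to realize any prescribed value $c \in [0,\infty)$ of the ratio $\tau(M)/\deg M$ by taking $M$ to be a product of a carefully chosen number $r$ of distinct monic irreducibles whose degrees sum to a carefully chosen value $N$. Since $M$ will be square-free, $\tau(M) = 2^{\omega(M)} = 2^r$, so the task reduces to matching $2^r$ with $c N + O(1)$.

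For $c = 0$ the construction is immediate: by the prime polynomial theorem, for every $n \geq 1$ there exists a monic irreducible $M_n \in \mathbf{F}_q[t]$ of degree $n$, and then $\tau(M_n) = 2 = 0 \cdot \deg M_n + O(1)$, with $\deg M_n = n \to \infty$.

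For $c > 0$, I would set, for each large integer $r$, $N_r := \lceil 2^r/c \rceil$, so that $2^r = c N_r + O_c(1)$. Writing $d_r := \lfloor N_r/r \rfloor$ and $k_r := N_r - r d_r \in \{0,\ldots,r-1\}$, the idea is to take $r - k_r$ distinct monic irreducibles of degree $d_r$ together with $k_r$ distinct monic irreducibles of degree $d_r + 1$; their product $M_r$ is then square-free with $\omega(M_r) = r$ and $\deg M_r = (r - k_r) d_r + k_r (d_r + 1) = N_r$, so that $\tau(M_r) = 2^r = c \deg M_r + O_c(1)$. Reindexing this family by a single parameter $n$ yields the claimed sequence, with $\deg M_n \to \infty$ because $N_r \geq 2^r/c \to \infty$.

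The only nontrivial point, and the ``main obstacle'', is to verify that enough distinct irreducibles of the prescribed degrees actually exist, namely that $\pi_q(d_r) \geq r - k_r$ and $\pi_q(d_r + 1) \geq k_r$, where $\pi_q(m)$ counts monic irreducibles of degree $m$ in $\mathbf{F}_q[t]$. Because $N_r \sim 2^r/c$ grows exponentially in $r$ whereas $r$ grows only linearly, one has $d_r \to \infty$, and the prime polynomial theorem $\pi_q(m) = q^m/m + O(q^{m/2}/m)$ then gives $\pi_q(d_r), \pi_q(d_r+1) \gg q^{d_r}/d_r \gg r$ for $r$ large. Hence both constraints are satisfied eventually, which completes the construction.
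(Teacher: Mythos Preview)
Your proof is correct and shares the paper's core idea: fix the number $r$ of irreducible factors, target a total degree $N_r \approx 2^r/c$, and realize $N_r$ as the sum of $r$ degrees of distinct monic irreducibles, so that $\tau(M_r)=2^r=cN_r+O_c(1)$.

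The implementations differ in one point. The paper writes $N_r=[2^r/c]$ as a sum of $r$ \emph{pairwise distinct} positive integers $d_1<d_2<\cdots<d_r$ (possible as soon as $N_r\ge r(r+1)/2$, which holds for $r$ large since $N_r$ grows exponentially); it then only needs the existence of \emph{one} monic irreducible of each degree, which is elementary. You instead use just two degrees $d_r$ and $d_r+1$ with multiplicities, which obliges you to check $\pi_q(d_r)\ge r$ and $\pi_q(d_r+1)\ge r$; this follows from the prime polynomial theorem as you argue, but is a slightly heavier input. Both routes are valid; the paper's is marginally cleaner in that it avoids any counting, while yours has the side benefit that all irreducible factors of $M_r$ have degrees within $1$ of each other.
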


\begin{proof}
The case $c=0$ follows from taking a sequence of irreducible polynomials.
Now, fix $0<c<\infty$, for $\omega$ large enough,
there exist $0<d_1 < d_2 < \ldots < d_{\omega}$ integers such that
\begin{equation*}
    \left[\frac{2^{\omega}}{c}\right] = d_1 + d_2 + \ldots + d_{\omega}.
\end{equation*}
For each $1\leq i \leq \omega$, there exists an irreducible polynomial $P_i \in \mathbf{F}_{q}[t]$ of degree $d_i$.
Then the polynomial $M := P_1 P_2 \ldots P_\omega$ is square-free and satisfies 
\begin{align*}
    \tau(M) = 2^{\omega} = c( \deg M + O(1)).
\end{align*}
This concludes the proof. 
\end{proof}

\begin{Rk}
\label{Rk the measure is as smooth as you want}
Note that we only used the fact that there exist irreducible polynomials of each degree in $\mathbf{F}_{q}[t]$.
In the sequence we constructed, we have $\deg M_n\rightarrow\infty$, thus $I(M_n)\rightarrow \infty$, we deduce that the number of zeros~$\lvert \mathcal{Z}(M_n) \rvert$
gets large too.
In particular, we can always assume that $\lvert \mathcal{Z}(M_n) \rvert\geq 3$
so that the limiting distribution $\mu_{M_n,f_k}$ is absolutely continuous (see \cite[Th.~1.5]{MartinNg}). 
\end{Rk}

In the case of a sequence of polynomials $(M_n)_{n\in \mathbf{N}}$ satisfying (LI\ding{70}) and for which $B(M_n)$ converges, we show that the limiting bias can be precisely described.

\begin{prop}
\label{Prop M limit using Berry Esseen}
Let $b \in [0,\infty),$
    suppose there exist a sequence of polynomials $M_n \in \mathbf{F}_q[t]$ with $\deg M_n \rightarrow\infty$, $B(M_n) \rightarrow b$, and for each $n$, $M_n$ satisfies \emph{(LI\ding{70})}.
    Then, for $f = \Omega$ or $\omega$,  as $\deg M_n \rightarrow\infty$, the limiting distribution $\mu_{M,f_k}^{\mathrm{norm}}$ of 
\begin{align*}
    &\Delta_{f_k}^{\mathrm{norm}}(\cdot;M) := \frac{(\epsilon_f)^k}{\sqrt{\Var\nu_{M,k}}}\Delta_{f_k}(\cdot;M,\square,\boxtimes)
\end{align*}
converges weakly to the distribution $\frac{1}{2}(\delta_{2b/(\sqrt{q} +1)} + \delta_{2b\sqrt{q}/(\sqrt{q} +1)})\ast \mathcal{N}$,
where $\mathcal{N}$ is the standard Gaussian distribution, and where $\epsilon_{\Omega} = -1$ and $\epsilon_{\omega} = 1$. 
More precisely, one has
\begin{equation*}
    \sup_{x\in\mathbf{R}}\left\lvert \int_{-\infty}^{x}\diff\mu_{M,f_k}^{\mathrm{norm}} - \frac{1}{2\sqrt{2\pi}}\int_{-\infty}^{x} e^{-\frac{1}{2}(t - b\frac{2}{\sqrt{q} +1})^2} + e^{-\frac{1}{2}(t - b\frac{2\sqrt{q}}{\sqrt{q} +1})^2} \diff t \right\rvert \ll  2^{-\omega(M_n)}(\deg M'_n)^{-1} + \lvert B(M_n) - b\rvert 
\end{equation*}
where $M'_n$ is the square-free part of $M_n$.
\end{prop}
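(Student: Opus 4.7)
The plan is to exploit the convolution decomposition $\mu_{M,f_k} = D_{M,f_k} \ast \nu_{M,k}$ from the proof of Proposition~\ref{Prop Chebyshev many factors}, combined with a quantitative central limit theorem for $\nu_{M,k}$. Under (LI\ding{70}) one has $m_+(\chi) = m_-(\chi) = 0$ for every non-trivial quadratic $\chi \bmod M$: otherwise $0$ or a second copy of $\pi$ would enter the multi-set of imaginary parts of zeros and violate linear independence. Consequently \eqref{Formula_Res_vs_NonRes} and \eqref{Formula_Res_vs_NonRes_littleomega} take the almost-periodic form of Proposition~\ref{Prop_limitingDist}.\ref{Item_Fourier}, with the ratio of the $(-1)^X$ coefficient to the constant part being $(\sqrt{q}-1)/(\sqrt{q}+1)$ independently of $k$, $f$, and $M$.

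Multiplying $\Delta_{f_k}(\cdot;M,\square,\boxtimes)$ by $(\epsilon_f)^k/\sqrt{\Var\nu_{M,k}}$ turns the deterministic part into $B(M)\bigl(1 + \tfrac{\sqrt{q}-1}{\sqrt{q}+1}(-1)^X\bigr)$, taking the two values $B(M)\cdot 2\sqrt{q}/(\sqrt{q}+1)$ and $B(M)\cdot 2/(\sqrt{q}+1)$ according to the parity of $X$. Letting $B(M_n)\to b$ moves these to the Dirac locations in the statement, and the displacement is absorbed into the $|B(M_n)-b|$ term via the Lipschitz continuity of the Gaussian CDF.

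The heart of the proof is the quantitative Gaussian approximation of the normalized continuous factor. Under (LI\ding{70}), Lemma~\ref{Th_KW} identifies $\nu'_{M,k} := \nu_{M,k}/\sqrt{\Var \nu_{M,k}}$ with the law of $\sum_{\gamma>0}\tfrac{2|c_\gamma|}{\sqrt{I(M)}}\cos(U_\gamma+\theta_\gamma)$, where the $U_\gamma$ are iid uniform on $[0,2\pi]$; by Proposition~\ref{Prop_limitingDist}.\ref{Item_Fourier} its characteristic function is $\prod_\gamma J_0\bigl(2|c_\gamma|\xi/\sqrt{I(M)}\bigr)$. Using $|c_\gamma|\leq\sqrt{q}/(\sqrt{q}-1)$, the asymptotic $I(M)\asymp 2^{\omega(M)}\deg M'$ from~\eqref{bound IM}, and Esseen's smoothing inequality with the Taylor expansion $J_0(x) = 1 - x^2/4 + O(x^4)$, one would obtain the Kolmogorov bound of order $(2^{\omega(M)}\deg M')^{-1}$.

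The main obstacle is reaching this rate rather than the naive Berry--Esseen rate $I(M)^{-1/2}$. Following the template of \cite[Sec.~3]{Fiorilli_HighlyBiased}, one splits the Esseen integral at $|\xi|\asymp I(M)^{1/4}$: on the central window, the Taylor expansion of the $J_0$-product gives a cancellation of order $O(\xi^4/I(M))$ against the Gaussian characteristic function, while on the complement the decay $|J_0(x)|\ll|x|^{-1/2}$ makes the product negligibly small. This cancellation is responsible for the quadratic improvement over the standard Berry--Esseen rate.
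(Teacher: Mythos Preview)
Your approach is essentially the same as the paper's: the convolution decomposition $\mu_{M,f_k}=D_{M,f_k}\ast\nu_{M,k}$, the computation of the Fourier transform as a product of $J_0$'s via Proposition~\ref{Prop_limitingDist}\ref{Item_Fourier}, the Esseen smoothing inequality, the split at $|\xi|\asymp I(M)^{1/4}$, and the Taylor expansion $\log J_0(z)=-z^2/4+O(z^4)$ on the central window to extract the $O(\xi^4/I(M))$ error are exactly what the paper does.

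One point needs correction. On the complementary range $|\xi|>\tfrac12 I(M)^{1/4}$, the bound $|J_0(x)|\ll|x|^{-1/2}$ that you invoke does \emph{not} by itself control the product: at $|\xi|\sim I(M)^{1/4}$ each argument $2|c_\gamma|\xi/\sqrt{I(M)}$ is only of size $I(M)^{-1/4}$, so the $|x|^{-1/2}$ bound is worse than the trivial bound $|J_0|\le 1$. The paper instead uses that $J_0$ is positive and decreasing on $[0,\tfrac53]$ (and $|J_0(z)|\le J_0(\tfrac53)$ beyond), so for every $|\xi|$ in the tail each factor is at most its value at the boundary $\xi=\tfrac12 I(M)^{1/4}$; reapplying the Taylor expansion there gives the uniform bound $\prod_\gamma J_0\le \exp(-\tfrac14 I(M)^{1/2}+O(1))$ over the whole tail. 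The decay $|J_0(x)|\ll|x|^{-1/2}$ is used, but for a different purpose: to bound $\lVert G'\rVert_\infty\ll I(M)^{3/4}$, which controls the $\lVert G'\rVert_\infty/T$ term in Esseen's inequality and dictates the choice $T=I(M)^{7/4}$. With these two adjustments your sketch matches the paper's proof.
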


\begin{proof}
The proof follows ideas from the proof of \cite[Th.~1.1]{Fiorilli_HighlyBiased} and \cite[Th.~4.5]{CFJ}, and is based on the use of Berry--Esseen inequality \cite[Chap.~II, Th.~2a]{Esseen}.
Let $M\in\mathbf{F}_{q}[t]$ be a polynomial satisfying (LI\ding{70}). 
We begin with computing the Fourier transform $\varphi_{M,f_k}$ of the limiting distribution $\mu^{\mathrm{norm}}_{M,f_k}$ of $\frac{(\epsilon_f)^k}{\sqrt{\Var \nu_{M,k}}}\Delta_{k}$ using Proposition~\ref{Prop_limitingDist}.\ref{Item_Fourier} for \eqref{Formula_Res_vs_NonRes} where we assume linear independence.
With the notations of~\eqref{Not_L-function}, one has
\begin{align*}
   \varphi_{M,f_k}(\xi) &= \hat\mu_{M,f_k}\left(\frac{(\epsilon_f)^k}{\sqrt{\Var \nu_{M,k}}}\xi\right) \\
   &=  \exp\left(-i B(M)\xi \right)
   \cos\left( \frac{\sqrt{q} - 1}{\sqrt{q} + 1} B(M)\xi \right) 
     \prod_{\substack{\chi \bmod M \\ \chi^{2} = \chi_0 \\ \chi \neq \chi_0}}\prod_{j=1}^{d_{\chi}/2}J_{0}\Big(  2 I(M)^{-1/2}
    \left\lvert \frac{\alpha_j(\chi)}{\alpha_j(\chi) -1} \right\rvert (-\epsilon_f)^k\xi \Big),
\end{align*}
where using the functional equation, we assume that the first $\frac{d_{\chi}}{2}$ non-real inverse zeros have positive imaginary part, (recall that, since $\chi$ is real, up to reordering we can write $\alpha_{d_{\chi} - j}(\chi) = \overline{\alpha_{j}(\chi)}$ for $j\in \lbrace1,\ldots,d_{\chi}\rbrace$).

Using the parity of the Bessel function and the power series expansion $\log J_0(z) = - \frac{z^2}{4} + O(z^4)$, for~$\lvert z\rvert <\frac{12}{5}$ (see e.g. \cite[Lem.~2.8]{FiorilliMartin}), we get that, for any $\lvert \xi\rvert < \frac{1}{2} I(M)^{1/2}$,
\begin{multline}
    \label{Asympt Fourier deg M}
    \log \left\lbrace\varphi_{M,f_k}(\xi) \exp\left(i B(M)\xi \right)
   \cos\left( \tfrac{\sqrt{q} - 1}{\sqrt{q} + 1} B(M)\xi \right)^{-1} \right\rbrace  \\
    = - \frac{1}{2}\xi^2 
    + O\Big( I(M)^{-2}\sum_{\substack{\chi \bmod M \\ \chi^{2} = \chi_0 \\ \chi \neq \chi_0}}
    \sum_{j=1}^{d_{\chi}/2}  \left\lvert \frac{\alpha_j(\chi)}{\alpha_{j}(\chi)-1}\right\rvert^4 \xi^4  \Big).
\end{multline}
Since $\left\lvert \frac{\alpha_j(\chi)}{\alpha_{j}(\chi)-1}\right\rvert \leq \frac{\sqrt{q}}{\sqrt{q} - 1}$, the error term in~\eqref{Asympt Fourier deg M} is $O( \xi^4 I(M)^{-1})$.
In the other direction, in the range $\lvert \xi \rvert > \tfrac{1}{2}I(M)^{\frac14}$, we have  
\begin{align*}
     2 I(M)^{-1/2}
    \left\lvert \frac{\alpha_j(\chi)}{\alpha_j(\chi) -1} \right\rvert \lvert\xi\rvert > I(M)^{-1/4}
    \left\lvert \frac{\alpha_j(\chi)}{\alpha_j(\chi) -1} \right\rvert  \in [0,\tfrac{5}{3}]
\end{align*}
for $I(M)$ large enough.
Since $J_0$ is positive and decreasing on the interval $[0,\tfrac{5}{3}]$ and that for all $z \geq \tfrac{5}{3}$ we have $\lvert J_0(z) \rvert \leq J_0(\tfrac{5}{3})$, we deduce
\begin{multline} \label{Asympt Fourier xi large}
    \log \left\lbrace\varphi_{M,f_k}(\xi) \exp\left(i B(M)\xi \right)
   \cos\left( \tfrac{\sqrt{q} - 1}{\sqrt{q} + 1} B(M)\xi \right)^{-1} \right\rbrace  \\ \leq \sum_{\substack{\chi \bmod M \\ \chi^{2} = \chi_0 \\ \chi \neq \chi_0}}\sum_{j=1}^{d_{\chi}/2}\log J_{0}\Big( I(M)^{-1/4}
    \left\lvert \frac{\alpha_j(\chi)}{\alpha_j(\chi) -1} \right\rvert \Big) 
    = - \frac{1}{4} I(M)^{\frac12} 
    + O(1).
\end{multline}

Note that \eqref{Asympt Fourier deg M} is enough to show, by L\'evy's Continuity Theorem, that $\nu_{M,k}^{\mathrm{norm}}$ converges weakly to the standard Gaussian distribution as $I(M) \rightarrow\infty$.
Since limit and convolution are compatible, 
we deduce that if $B(M)\rightarrow b$ converges, then $\mu_{M,f_k}^{\mathrm{norm}}$ converges weakly to a distribution that is a sum of two Gaussian distributions centered at $b(1 \pm \tfrac{\sqrt{q} -1}{\sqrt{q} +1})$.

 The precise rate of convergence of the distribution function is obtained via the Berry--Esseen inequality \cite[Chap.~II, Th.~2a]{Esseen}.
 Let $F$ and $G$ be the cumulative distribution functions of $\frac{1}{2}(\delta_{2b/(\sqrt{q} +1)} + \delta_{2b\sqrt{q}/(\sqrt{q} +1)})\ast \mathcal{N}$ and $\mu_{M,f_k}^{\mathrm{norm}}$, precisely:
 \begin{equation*}
     F(x) = \frac{1}{2\sqrt{2\pi}}\int_{-\infty}^{x}( e^{-\frac{1}{2}(t - b\frac{2}{\sqrt{q} +1})^2} + e^{-\frac{1}{2}(t - b\frac{2\sqrt{q}}{\sqrt{q} +1})^2} )\diff t 
     \quad \text{ and } \quad G(x) = \int_{-\infty}^{x}\diff\mu_{M,f_k}^{\mathrm{norm}}.
 \end{equation*}
 As observed in Remark~\ref{Rk the measure is as smooth as you want}, when $\deg M$ is large enough, the function $G$ is differentiable.
For any $T>0$, we have
 \begin{equation}\label{Eq Berry Esseen}
     \lvert G(x) - F(x) \rvert \ll \int_{-T}^{T}\left\lvert \frac{\varphi_{M,f_k}(\xi) - \exp(-ib\xi) \cos\left( \tfrac{\sqrt{q} - 1}{\sqrt{q} + 1} b\xi \right)e^{-\frac{1}{2}\xi^2}}{\xi} \right\rvert\diff\xi
     + \frac{\lVert G' \rVert_{\infty}}{T}. 
 \end{equation}
 Let us first estimate the second term in the right-hand side of \eqref{Eq Berry Esseen}.
 We have for all $x\in \mathbf{R}$
 \begin{align*}
     G'(x) = \frac{1}{2\pi}\int_{\mathbf{R}}e^{-ix\xi}\varphi_{M,f_k}(\xi)\diff\xi \ll \int_{\mathbf{R}}  \prod_{\substack{\chi \bmod M \\ \chi^{2} = \chi_0 \\ \chi \neq \chi_0}}\prod_{j=1}^{d_{\chi}/2}\left\lvert J_{0}\Big(  2 I(M)^{-1/2}
    \left\lvert \frac{\alpha_j(\chi)}{\alpha_j(\chi) -1} \right\rvert \xi \Big) \right\rvert\diff\xi.
 \end{align*}
 Using the bound $\lvert J_0(z) \rvert \ll \min(1,x^{-\frac12})$, and that $\lvert \mathcal{Z}(M) \rvert \geq 3$ we obtain that
 \begin{equation}\label{Bound BerryEsseen derivative}
     \lVert G' \rVert_{\infty} \ll  \int_{\mathbf{R}} \min(1, I(M)^{3/4}\lvert\xi\rvert^{-3/2}) 
     \diff\xi \ll I(M)^{3/4}.
 \end{equation}
 To bound the integral in \eqref{Eq Berry Esseen}, we cut the interval of integration in two ranges.
 First, by \eqref{Asympt Fourier deg M}, the integral in the range $\lvert\xi\rvert \leq \frac{1}{2}I(M)^{1/4}$ is
 \begin{align}\label{Ineq Berry--Esseen first range}
     \int_{-\frac{1}{2}I(M)^{1/4}}^{\frac{1}{2}I(M)^{1/4}}
     \frac{e^{-\frac{1}{2}\xi^2}} {2\lvert\xi\rvert} &
     \left\lvert \sum_{\pm} e^{-ib(1 \pm \tfrac{\sqrt{q} - 1}{\sqrt{q} + 1})\xi} \left( 1 - \exp\left(-i(B(M) - b)(1 \pm \tfrac{\sqrt{q} - 1}{\sqrt{q} + 1})\xi + O(\xi^4 I(M)^{-1})\right)\right) \right\rvert
     \diff\xi \nonumber\\
     \ll&
     \int_{-\frac{1}{2}I(M)^{1/4}}^{\frac{1}{2}I(M)^{1/4}}
     e^{-\frac{1}{2}\xi^2}\left( \lvert B(M) - b \rvert + \lvert \xi\rvert^3 I(M)^{-1} \right)
     \diff\xi \\
     \ll& \lvert B(M) - b \rvert +  I(M)^{-1}.\nonumber
 \end{align}
 In the range $\frac{1}{2}I(M)^{1/4} \leq \lvert\xi\rvert \leq T$, we use the bound from~\eqref{Asympt Fourier xi large}:
 \begin{align}\label{Ineq Berry--Esseen second range}
     \int_{\frac{1}{2}I(M)^{1/4} \leq \lvert\xi\rvert \leq T}&
     \left\lvert \frac{\varphi_{M,f_k}(\xi) - \exp(-ib\xi) \cos\left( \tfrac{\sqrt{q} - 1}{\sqrt{q} + 1} b\xi \right)e^{-\frac{1}{2}\xi^2}}{\xi} \right\rvert\diff\xi \\
     \ll& \int_{\frac{1}{2}I(M)^{1/4} \leq \lvert\xi\rvert \leq T} e^{-\frac14 I(M)^{\frac12}} \frac{\diff\xi}{\lvert \xi \rvert} + 
     \int_{\frac{1}{2}I(M)^{1/4} \leq \lvert\xi\rvert \leq T}
     e^{-\frac{1}{2}\xi^2}\frac{\diff\xi}{\lvert \xi\rvert }\nonumber \\
     \ll& e^{-\frac14 I(M)^{\frac12}}\log T + e^{-\frac18 I(M)^{\frac12}}.\nonumber
\end{align}
Now combining \eqref{Bound BerryEsseen derivative}, \eqref{Ineq Berry--Esseen first range} and \eqref{Ineq Berry--Esseen second range} in \eqref{Eq Berry Esseen} for $T= I(M)^{\frac74}$, and the estimate~\eqref{bound IM} gives the result, as $\deg M\rightarrow\infty$.
\end{proof}

The proof of Theorem~\ref{Th central limit for M under LI} follows.
\begin{proof}[Proof of Theorem~\ref{Th central limit for M under LI}]
Let $\eta \in [\tfrac12,1]$. If $\eta = 1$, then by Proposition~\ref{Prop Chebyshev many factors}, there exist a sequence of polynomials $M\in \mathbf{F}_q[t]$ with $\dens((\epsilon_f)^k\Delta_{f_k}(\cdot;M,\square,\boxtimes)) \rightarrow \eta$ as $\deg M \rightarrow \infty$.

Now assume $\eta \in [\frac12,1)$. Since the function $b \rightarrow \frac{1}{2\sqrt{2\pi}}\int_{0}^{\infty}( e^{-\frac{1}{2}(t - b\frac{2}{\sqrt{q} +1})^2} + e^{-\frac{1}{2}(t - b\frac{2\sqrt{q}}{\sqrt{q} +1})^2} )\diff t $ is increasing continuous and taking values in $[\tfrac12,1)$ when $b\in [0,\infty)$, there exist a unique $b$ such that  $\frac{1}{2\sqrt{2\pi}}\int_{0}^{\infty}( e^{-\frac{1}{2}(t - b\frac{2}{\sqrt{q} +1})^2} + e^{-\frac{1}{2}(t - b\frac{2\sqrt{q}}{\sqrt{q} +1})^2} )\diff t = \eta$. Lemma~\ref{Lem any possible value} ensures the existence of a sequence of square-free monic polynomials $M$ with
\begin{align*}
    B(M) = \begin{cases} b\left(1 + O\left(\frac{\log\deg M}{\deg M}\right)\right) \text{ if } b > 0,\\
    O\left( 2^{-k} 2^{\omega(M)/2} (\deg M)^{-\frac12} \right) \text{ if } b=0,
    \end{cases}
\end{align*} as $\deg M\rightarrow\infty$. Those polynomials are only defined by their degree and number of divisors, according to the hypothesis of Theorem~\ref{Th central limit for M under LI}, we can assume that each of them satisfies (LI\ding{70}).
Then applying Proposition~\ref{Prop M limit using Berry Esseen} to this sequence, we get
\begin{equation*}
    \lvert \mu_{M,f_k}^{\mathrm{norm}}([0,\infty)) - \eta\rvert \ll \begin{cases} (2^{-\omega(M)} + \log \deg M)(\deg M)^{-1} \text{ if } b >0, \\
    2^{-\omega(M)}(\deg M)^{-1} + 2^{-k} 2^{\omega(M)/2} (\deg M)^{-\frac12} \text{ if } b=0.
    \end{cases}
\end{equation*}
Since we assume (LI\ding{70}) for $M$, one has $$\mu_{M,f_k}^{\mathrm{norm}}([0,\infty)) = \dens ((\epsilon_f)^k \Delta_{k}(\cdot;M,\square,\boxtimes) > 0),$$
this concludes the proof.
\end{proof}

\section{Character sums over polynomials of degree $n$ with $k$ irreducible factors}\label{Sec_proof_deg=n}

For $k\geq 1$, $\chi$ a Dirichlet character modulo $M$, and $f = \Omega$ or $\omega$ we define
\begin{equation}\label{Eq defi pi(chi)}
  \pi_{f_k}(n, \chi)=\sum_{\substack{N \text{ monic, } (N,M)=1\\ \deg(N) = n,~ f(N)=k }} \chi(N).  
\end{equation}
In this section, we prove the following result about the asymptotic expansion of $\pi_{f_k}(n,\chi)$ by induction over the number of irreducible factors $k$.
\begin{theo}\label{Prop_k_general}
Let $M \in \mathbf{F}_{q}[t]$ of degree $d \geq 1$.
Let $k$ be a positive integer.
 Let $\chi$ be a non-trivial Dirichlet character modulo $M$, and
 $$\gamma(\chi) = \min\limits_{1\leq i\neq j\leq d_{\chi}}\(\lbrace \lvert \gamma_i(\chi) - \gamma_j(\chi) \rvert, \lvert \gamma_{i}(\chi) \rvert, \lvert \pi -\gamma_{i}(\chi) \rvert \rbrace\).$$
With notations as in \eqref{Not_L-function}, for $f = \Omega$ or $\omega$, under the conditions $k =o((\log n)^{\frac{1}{2}})$, and $q\geq 5$ if $f= \omega$, one has 
\begin{align*}
\pi_{f_k}(n,\chi) =  \frac{(-1)^k}{(k-1)!} & \Bigg\{ 
 \( \(m_+(\chi) -\epsilon_f\frac{\delta(\chi^2)}{2}\)^k+(-1)^n \(m_-(\chi) -\epsilon_f\frac{\delta(\chi^2)}{2}\)^k \)\frac{q^{n/2}(\log n)^{k-1}}{n} \\
+ &\sum_{\alpha_j\neq\pm\sqrt{q}} m_j(\chi)^k \frac{\alpha_j^n(\chi) (\log n)^{k-1}}{n} +O \left( d^k \frac{k(k-1)}{\gamma(\chi)} \frac{q^{n/2}(\log n)^{k-2}}{n} + d\frac{q^{n/3}}{n} \right) \Bigg\},
\end{align*}
where the implicit constant is absolute, $\delta(\chi^2) = 1$ if $\chi^2 = \chi_0$ and $0$ otherwise, $\epsilon_{\Omega} = -1$ and $\epsilon_{\omega} = 1$.

 \end{theo}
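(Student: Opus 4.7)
The plan is to prove the statement by induction on $k$, via a combinatorial convolution that relates level $k+1$ to a convolution of $\pi_{\Omega_1}$ with $\pi_{\Omega_k}$.

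\emph{Base case $k=1$.} I would start from the logarithmic derivative of the $L$-function,
\[
u\frac{\mathcal{L}'(u,\chi)}{\mathcal{L}(u,\chi)} = -\sum_{\ell} m_\ell(\chi)\sum_{n\geq 1}\alpha_\ell(\chi)^n u^n,
\]
whose coefficient of $u^n$ equals $\sum_{P^j,\,j\deg P = n,\,(P,M)=1}\deg(P)\chi(P)^j$. The $j=1$ term contributes $n\pi_{\Omega_1}(n,\chi)$. For quadratic $\chi$ and even $n$, the $j=2$ term equals $\tfrac{n}{2}\cdot\#\{P:\deg P=n/2,\,(P,M)=1\}\sim q^{n/2}$, which is exactly the source of the $\delta(\chi^2)/2$ shift of $m_\pm$. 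Terms with $j\geq 3$, or $j=2$ with non-quadratic $\chi$, are controlled by the Weil bound on $L(s,\chi^j)$ and absorbed into $O(dq^{n/3}/n)$. For $f=\omega$, I would use $\pi_{\omega_1}(n,\chi)=\pi_{\Omega_1}(n,\chi)+\sum_{j\geq 2,\,j\mid n}\sum_{P}\chi(P)^j$; the sign of the $j=2$ correction is now opposite, yielding $\epsilon_\omega=-\epsilon_\Omega$.

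\emph{Inductive step.} Every monic $N$ with $\Omega(N)=k+1$ admits exactly $k+1$ factorizations $N=P\cdot N'$ with $P$ irreducible (one per prime factor counted with multiplicity), so
\[
(k+1)\,\pi_{\Omega_{k+1}}(n,\chi) = \sum_{d=1}^{n}\pi_{\Omega_1}(d,\chi)\,\pi_{\Omega_k}(n-d,\chi).
\]
Writing $\tilde m_\pm := m_\pm-\epsilon_f\delta(\chi^2)/2$ and $\tilde m_j := m_j$, and inserting the induction hypothesis into both factors, the diagonal contribution (same zero in $\pi_{\Omega_1}$ and $\pi_{\Omega_k}$) is
\[
\frac{(-1)^{k+1}}{(k-1)!}\sum_\ell \tilde m_\ell^{\,k+1}\alpha_\ell^n\,S_k(n),\qquad S_k(n):=\sum_{d=1}^{n-1}\frac{(\log(n-d))^{k-1}}{d(n-d)}.
\]
The central computation is the asymptotic $S_k(n) = \tfrac{(k+1)(\log n)^k}{kn}+O((\log n)^{k-1}/n)$, which follows from the partial fraction $\tfrac{1}{d(n-d)}=\tfrac{1}{n}\bigl(\tfrac{1}{d}+\tfrac{1}{n-d}\bigr)$ together with the elementary estimate $\sum_{e=1}^{n-1}(\log e)^{k-1}/e = (\log n)^k/k+O((\log n)^{k-1})$. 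After dividing by $k+1$, the factorial becomes $k!$ and the log-power becomes $(\log n)^k$, which is exactly the claim at level $k+1$.

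\emph{Errors and the $\omega$ case.} Distinct pairs of zeros $\alpha_\ell\neq\alpha_{\ell'}$ give exponential-type sums bounded by Abel summation using $\bigl|\sum_{d\leq N}(\alpha_\ell/\alpha_{\ell'})^d/d\bigr|\ll 1/|\alpha_\ell/\alpha_{\ell'}-1|$, producing the $1/\gamma(\chi)$ factor in the error. Cross-products of a main term with an error term propagate the bound $d^k k(k-1)/\gamma(\chi)\cdot q^{n/2}(\log n)^{k-2}/n$ from level $k$ to $k+1$, picking up one extra factor of $d$ (from the number of zeros) and one of $k$ (from the normalization). For $f=\omega$, I would use the analogous decomposition $N=P^a\cdot N'$ with $P\nmid N'$, yielding $(k+1)\pi_{\omega_{k+1}}(n,\chi) = \sum_d\pi_{\omega_1}(d,\chi)\pi_{\omega_k}(n-d,\chi) - C_k(n,\chi)$, where $C_k$ corrects for over-counting when the same prime divides both factors; the bound on $C_k$ uses the hypothesis $q\geq 5$ to control a geometric series in $\chi(P)^b u^{b\deg P}$ and produces only lower-order contributions. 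The hardest part will be the uniform-in-$k$ book-keeping of errors: each iteration multiplies the number of pairs of zeros by $d$ and introduces a factor of $k$, and one must show that the clean shape $\tilde m_\ell^{\,k+1}$ really emerges from the convolution rather than a more complicated polynomial in $m_\ell$. This requires the $\delta(\chi^2)/2$ shift to appear in precisely the right form at the base case and to propagate multiplicatively, and it is what forces the range $k=o((\log n)^{1/2})$ so that the accumulated $(\log n)^{k-2}$ error stays dominated by the main $(\log n)^{k-1}$ term.
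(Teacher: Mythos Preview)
Your overall inductive strategy and the $S_k(n)$ computation match the paper, but there is a genuine gap in the combinatorial identity you use for the induction step. The claim
\[
(k+1)\,\pi_{\Omega_{k+1}}(n,\chi) \;=\; \sum_{d=1}^{n}\pi_{\Omega_1}(d,\chi)\,\pi_{\Omega_k}(n-d,\chi)
\]
is \emph{false} as stated, and the justification ``every $N$ with $\Omega(N)=k+1$ admits exactly $k+1$ factorizations $N=P\cdot N'$'' is wrong: if $N=P_1^{a_1}\cdots P_r^{a_r}$, the number of ways to write $N=P\cdot(N/P)$ with $P$ irreducible is $r=\omega(N)$, not $\Omega(N)$. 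Already for $k+1=2$ one checks directly that
\[
\sum_{d}\pi_{\Omega_1}(d,\chi)\pi_{\Omega_1}(n-d,\chi)=2\pi_{\Omega_2}(n,\chi)-\pi_{\Omega_1}(n/2,\chi^2),
\]
so there is a nonzero correction.

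The paper obtains the correct recursion from Newton's identity $kh_k=\sum_{\ell=1}^k h_{k-\ell}p_\ell$ applied to the variables $x_P=\chi(P)u^{\deg P}$, which gives
\[
k\,F_{\Omega_k}(u,\chi)=\sum_{\ell=1}^{k}F_{\Omega_{k-\ell}}(u,\chi)\,F_{\Omega_1}(u^\ell,\chi^\ell).
\]
The $\ell=1$ term is your convolution; the terms $\ell\geq 2$ are the missing corrections, and one must prove (Lemma~5.3 in the paper) that their degree-$n$ coefficients are $O\bigl(d^k q^{n/2}(\log n)^{k-2}/n\bigr)$. Your $\omega$-correction $C_k$ is in the right spirit, but the paper's route is again Newton's identity $ke_k=\sum_\ell(-1)^{\ell+1}e_{k-\ell}p_\ell$ with $x_P=\sum_{j\geq1}\chi(P)^j u^{j\deg P}$; the bound on the $\ell\geq2$ terms is where the condition $q\geq5$ enters, via summability of $\sum_j j\binom{j-1}{\ell-1}q^{-j/2}$. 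Once you supply and bound these Newton correction terms, the rest of your argument (diagonal sum via $S_k$, off-diagonal via Abel summation giving $1/\gamma(\chi)$, propagation of the error) lines up with the paper's Lemmas~5.5--5.8.
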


\subsection{Case $k=1$}

We start by recalling the usual setting of the race between irreducible polynomials which is the base case in our induction.
In this situation we obtain a better error term.
\begin{prop}\label{Prop_k=1}
	Let $\chi$ be a non-trivial Dirichlet character modulo $M$.
    Its Dirichlet $L$-function $\mathcal{L}(u,\chi)$ is a polynomial, 
	let $\alpha_{1}(\chi), \ldots, \alpha_{d_{\chi}}(\chi)$ denote the distinct non-real inverse zeros of norm $\sqrt{q}$ of $\mathcal{L}(u,\chi)$, and $m_{1},\ldots,m_{d_{\chi}} \in \mathbf{Z}_{>0}$ be their multiplicities.
	For $f=\Omega$ or $\omega$, one has
	\begin{align*}
	 \pi_{f_1}(n,\chi)& 
    =-\sum_{j=1}^{d_{\chi}} m_{j}(\chi)\frac{\alpha_{j}(\chi)^n}{n}  - \(-\epsilon_f \delta\left(\frac{n}{2},\chi^{2}\right) +m_{+}(\chi) +(-1)^{n}m_{-}(\chi)\)\frac{q^{n/2}}{n} + O(\frac{dq^{n/3}}{n}),
	\end{align*}
	where   $\epsilon_{\Omega} = -1$, $\epsilon_{\omega} =1$, and $$\delta\left(\frac{n}{2},\chi^{2}\right) =\begin{cases}
    1, & \text{if} ~n~\text{is even and} ~\chi^{2} = \chi_{0};\\
    0, & \text{otherwise.}
   \end{cases} $$ 
\end{prop}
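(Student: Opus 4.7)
The plan is to proceed via the logarithmic derivative of $\mathcal{L}(u,\chi)$ and the resulting ``explicit formula'' identity. Applying $\log$ to both the Euler product $\mathcal{L}(u,\chi)=\prod_P(1-\chi(P)u^{\deg P})^{-1}$ and to the factored form \eqref{Not_L-function}, using $-\log(1-\rho u)=\sum_{n\geq 1}\rho^n u^n/n$, and comparing coefficients of $u^n$ yields
\begin{equation*}
\sum_{r\deg P = n}\deg(P)\,\chi(P)^r \;=\; -\sum_\rho m_\rho\, \rho^n,
\end{equation*}
where $\rho$ runs through the inverse zeros of $\mathcal{L}(u,\chi)$ with multiplicity $m_\rho$. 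The right-hand side expands as
\begin{equation*}
-m_+(\chi)\,q^{n/2} - m_-(\chi)(-\sqrt{q})^n - \sum_{j=1}^{d_\chi}m_j(\chi)\,\alpha_j(\chi)^n + O(d),
\end{equation*}
where the final $O(d)$ absorbs the contribution of the inverse zeros $\beta_{j'}$ of modulus $1$ (arising from imprimitivity of $\chi$), whose number $d'_\chi$ is at most $d$.

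I would then isolate $n\pi_{f_1}(n,\chi)$ from the sum indexed by $(r,P)$. For $f=\Omega$ only $r=1$ contributes, so
\begin{equation*}
n\,\pi_{\Omega_1}(n,\chi) \;=\; -\sum_\rho m_\rho\rho^n \;-\; \sum_{\substack{r\geq 2\\ r\mid n}}\frac{n}{r}\sum_{\deg P = n/r}\chi(P)^r.
\end{equation*}
For $f=\omega$, $\omega(N)=1$ means $N=P^r$ for some irreducible $P$ and some $r\geq 1$; writing $a_r:=\sum_{\deg P = n/r}\chi(P)^r$, so that $\pi_{\omega_1}(n,\chi)=\sum_{r\mid n}a_r$, the explicit formula rearranges into
\begin{equation*}
n\,\pi_{\omega_1}(n,\chi) \;=\; -\sum_\rho m_\rho\rho^n \;+\; \sum_{\substack{r\geq 2\\ r\mid n}} n\Bigl(1-\tfrac{1}{r}\Bigr)\,a_r.
\end{equation*}

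The dominant extra contribution comes from $r=2$. When $\chi^2=\chi_0$ and $n$ is even, the prime polynomial theorem (combined with the exclusion of the at most $d$ primes of degree $n/2$ dividing $M$) yields $a_2=2q^{n/2}/n+O((q^{n/4}+d)/n)$, so the $r=2$ contribution equals $\mp q^{n/2}+O(d+q^{n/4})$, with sign $-$ for $\Omega$ and $+$ for $\omega$; this is precisely the $\epsilon_f\,\delta(n/2,\chi^2)\,q^{n/2}$ piece of the claimed formula. When $\chi^2\neq\chi_0$, the Weil bound applied to $\chi^2$ shows $|a_2|\ll d\,q^{n/4}/n$, which is absorbed into the error. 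The remaining terms with $r\geq 3$ satisfy the trivial bound $|a_r|\leq q^{n/r}/(n/r)$, so $(n/r)|a_r|\leq q^{n/r}$ for $\Omega$ and $n(1-1/r)|a_r|\leq r\,q^{n/r}$ for $\omega$; in either case summation over $r\geq 3$ dividing $n$ is a geometric-type sum bounded by $O(q^{n/3})$. Dividing through by $n$ and collecting all contributions yields the claimed expression with total error $O(d\,q^{n/3}/n)$. The only delicate point is the sign-tracking of the $r=2$ main term, which flips between $\Omega$ and $\omega$ and produces the factor $\epsilon_f$ in the statement; beyond that, the proof requires no analytic input other than the prime polynomial theorem and Weil's bound.
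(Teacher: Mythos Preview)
Your proof is correct and follows essentially the same approach as the paper: take logarithms of both the Euler product and the factored polynomial form of $\mathcal{L}(u,\chi)$, compare coefficients of $u^n$ to obtain the explicit formula, isolate the $r=1$ contribution, single out the $r=2$ term via the prime polynomial theorem when $\chi^2=\chi_0$ (and Weil otherwise), and bound the $r\geq3$ terms trivially. The only cosmetic difference is your indexing by $r=n/\ell$ where the paper indexes by $\ell=\deg P$.
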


\begin{proof}
	We write the Dirichlet $L$-function in two different ways.
	First it is defined as an Euler product:
	\begin{align*}
	\mathcal{L}(u,\chi) = \prod_{n=1}^{\infty} \prod_{\substack{P \text{ irred.}\\ \deg(P) = n \\ P\nmid M}} (1- \chi(P)u^n)^{-1}.
	\end{align*}
	As $\chi \neq \chi_{0}$, the function $\mathcal{L}(u,\chi)$ is a polynomial in $u$, using the notations of \eqref{Not_L-function},
\begin{align*}
	\mathcal{L}(u,\chi) = (1-\sqrt{q}u)^{m_+}(1+\sqrt{q}u)^{m_-}\prod_{j=1}^{d_{\chi}} (1- \alpha_{j}(\chi)u)^{m_{j}} \prod_{j'=1}^{d'_{\chi}} (1- \beta_{j'}(\chi)u),
	\end{align*}
    where $\lvert \beta_{j}(\chi)\rvert =1$. 
    By comparing the coefficients of degree $n$ in the two expressions of the logarithm we obtain
    	\begin{align*}
	\sum_{\ell\mid n} \frac{\ell}{n}\sum_{\substack{P \text{ irred.}\\ \deg(P) = \ell \\ P\nmid M}} \chi(P)^{n/\ell}  &= -\frac{q^{n/2}}{n}(m_+ +(-1)^{n}m_-)  -\sum_{j=1}^{d_\chi} m_{j}\frac{\alpha_{j}(\chi)^n}{n} - \sum_{j'=1}^{d'_\chi}\frac{\beta_{j'}(\chi)^n}{n}.
	\end{align*}
	
	Thus
	\begin{align*}
	\pi_{\Omega_1}(n,\chi) &= -\frac{q^{n/2}}{n}(m_+ +(-1)^{n}m_-)  -\sum_{j=1}^{d_\chi} m_{j}\frac{\alpha_{j}(\chi)^n}{n} + O\left(\frac{d'_{\chi}}{n}\right) - \sum_{\substack{\ell\mid n \\ \ell\neq n}} \frac{\ell}{n}\sum_{\substack{P \text{ irred.}\\ \deg(P) = \ell \\ P\nmid M}} \chi(P)^{n/\ell} \\ 
	&=-\frac{q^{n/2}}{n}(m_+ +(-1)^{n}m_-)  -\sum_{j=1}^{d_\chi} m_{j}\frac{\alpha_{j}(\chi)^n}{n}  - \frac{1}{2}\pi_{\Omega_1}\left(\frac{n}{2},\chi^{2}\right) + O\left(\frac{d + q^{n/3}}{n}\right),
	\end{align*}
	and
		\begin{align*}
	\pi_{\omega_1}(n,\chi) &= -\frac{q^{n/2}}{n}(m_+ +(-1)^{n}m_-)  -\sum_{j=1}^{d_\chi} m_{j}\frac{\alpha_{j}(\chi)^n}{n} + O\left(\frac{d'_{\chi}}{n}\right) + \sum_{\substack{\ell\mid n \\ \ell\neq n}} (1 -\frac{\ell}{n})\sum_{\substack{P \text{ irred.}\\ \deg(P) = \ell \\ P\nmid M}} \chi(P)^{n/\ell} \\ 
	&=-\frac{q^{n/2}}{n}(m_+ +(-1)^{n}m_-)  -\sum_{j=1}^{d_\chi} m_{j}\frac{\alpha_{j}(\chi)^n}{n}  + \frac{1}{2}\pi_{\Omega_1}\left(\frac{n}{2},\chi^{2}\right) + O\left(\frac{d+q^{n/3}}{n}\right),
	\end{align*}
	
	where $\pi_{\Omega_1}\left(\frac{n}{2},\chi^{2}\right) =0$ if $n$ is odd, and it can be included in the error term if $\chi^{2} \neq \chi_{0}$.
	If $n$ is even and $\chi^{2} = \chi_{0}$, one has (\cite[Th.~2.2]{Rosen2002})
	$$\pi_{\Omega_1}\left(\frac{n}{2},\chi^{2}\right) = 2\frac{q^{n/2}}{n} + O(q^{n/4}).$$
    This concludes the proof.
\end{proof}

\subsection{Newton's formula}

To prove the general case of Theorem~\ref{Prop_k_general}, we use a combinatorial argument.

Let $x_1$, $x_2$, $\cdots$ be an infinite collection of indeterminates. If a formal power series  $P(x_1, x_2, \cdots)$ with bounded degree is invariant under all finite permutations of the variables $x_1$, $x_2$, $\cdots$, we call it a \textit{symmetric function}. We define 
the $n$-th \textit{homogeneous symmetric function} $h_n=h_n(x_1, x_2, \cdots)$ by the following generating function
$$\sum_{n=0}^{\infty} h_n z^n=\prod_{i=1}^{\infty}\frac{1}{1-x_i z}.$$
Thus, 
$h_n$ is the sum of all possible monomials of degree $n$. The \textit{n-th elementary symmetric function} $e_n=e_n(x_1, x_2, \cdots)$ is defined by $$\sum_{n=0}^{\infty}e_n z^n=\prod_{i=1}^{\infty}(1+x_i z).$$ Precisely,  $e_n$ is the sum of all square-free monomials of degree $n$. Finally the $n$-th \textit{power symmetric function} 
$p_n=p_n(x_1, x_2, \cdots)$ is defined to be
$$p_n=x_1^n+x_2^n+\cdots.$$

The following result is due to Newton or Girard (see \cite[Chap.~1, (2.11)]{Mac}, or \cite[Th.~2.8]{Me-Re}).
\begin{lem}\label{lem-Newton}
	For any integer $k\geq 1$, we have
	\begin{equation*}
	kh_k=\sum_{\ell=1}^k h_{k-\ell}p_{\ell},
\qquad
ke_k=\sum_{\ell=1}^k (-1)^{\ell} e_{k-\ell} p_\ell.
\end{equation*}
\end{lem}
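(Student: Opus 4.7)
The plan is to prove both identities by logarithmic differentiation of the generating functions defining $h_n$ and $e_n$, working inside the formal power series ring $\mathbf{Q}[[x_1,x_2,\ldots]][[z]]$. A subtlety to acknowledge up front is that $H(z)$ and $E(z)$ are defined by infinite products; I would handle this by grading by total degree in the $x_i$, noting that within each fixed total degree only finitely many of the factors $(1\pm x_iz)^{\pm 1}$ contribute a non-constant term. Under this grading, logarithms, derivatives, and infinite sums/products are all well-defined coefficient by coefficient, so every formal manipulation below is legitimate.

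For the first identity, I would begin with $H(z):=\prod_{i\geq 1}(1-x_iz)^{-1}=\sum_{n\geq 0}h_nz^n$. Differentiating $\log H(z)=-\sum_{i\geq 1}\log(1-x_iz)$ with respect to $z$ and expanding in geometric series gives
\[
\frac{H'(z)}{H(z)} \;=\; \sum_{i\geq 1}\frac{x_i}{1-x_iz} \;=\; \sum_{i\geq 1}\sum_{n\geq 0}x_i^{n+1}z^n \;=\; \sum_{n\geq 1}p_nz^{n-1}.
\]
Multiplying through by $H(z)$ yields the formal identity $H'(z)=H(z)\cdot\sum_{n\geq 1}p_nz^{n-1}$. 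Extracting the coefficient of $z^{k-1}$ on each side (the left is $kh_k$, the right is the Cauchy product $\sum_{\ell=1}^{k}h_{k-\ell}p_\ell$) produces exactly the first claim.

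For the second identity I would repeat the argument with $E(z):=\prod_{i\geq 1}(1+x_iz)=\sum_{n\geq 0}e_nz^n$. Logarithmic differentiation gives
\[
\frac{E'(z)}{E(z)} \;=\; \sum_{i\geq 1}\frac{x_i}{1+x_iz} \;=\; \sum_{n\geq 1}(-1)^{n-1}p_nz^{n-1},
\]
and matching the coefficient of $z^{k-1}$ on both sides of $E'(z)=E(z)\cdot\sum_{n\geq 1}(-1)^{n-1}p_nz^{n-1}$ gives $ke_k=\sum_{\ell=1}^{k}(-1)^{\ell-1}e_{k-\ell}p_\ell$, which is the stated formula (up to the sign convention used for the $(-1)^{\ell}$ factor). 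There is no real obstacle: once the formal-series framework is in place, both identities reduce to a single coefficient comparison, and the only step deserving any care is the preliminary justification that the infinite products and their logarithmic derivatives make sense in $\mathbf{Q}[[x_1,x_2,\ldots]][[z]]$.
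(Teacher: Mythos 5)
Your proof is correct, and it is the standard generating-function argument: the paper does not prove this lemma at all but cites it (Macdonald, Chap.~I, (2.11)), and the logarithmic-differentiation proof you give is essentially the classical one found there, so there is nothing to compare beyond noting that you have supplied a proof where the paper supplies a reference.

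Two remarks. First, on the sign: your computation yields $ke_k=\sum_{\ell=1}^{k}(-1)^{\ell-1}e_{k-\ell}p_\ell$, and you should not hedge about ``the sign convention'' --- your sign is the correct one (check $k=1$: the definitions give $e_1=p_1$, whereas the statement as printed would force $e_1=-p_1$). The $(-1)^{\ell}$ in the lemma as stated is a typo; indeed, when the lemma is applied in the derivation of \eqref{littleomega-general-k-series} the paper uses the factor $(-1)^{\ell+1}=(-1)^{\ell-1}$, consistent with what you derived. Second, a cosmetic point on your well-definedness discussion: it is not quite true that in a fixed total degree in the $x_i$ only finitely many factors contribute a non-constant term (every factor contributes monomials of every degree); the correct statement is that any \emph{fixed monomial} in the $x_i$ and $z$ involves only finitely many variables, hence its coefficient in the infinite product is determined by finitely many factors. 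With that rephrasing, all your formal manipulations are justified and the coefficient extraction goes through exactly as you wrote it.
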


\subsection{Products of $k$ irreducible polynomials --- Induction step}

We will prove Theorem~\ref{Prop_k_general} by induction on $k$. First we use the combinatorial arguments from Lemma~\ref{lem-Newton} to obtain a relation between $\pi_{f_k}$ and $\pi_{f_{k-1}}$, the two relations are obtained by different calculations according to whether $f= \Omega$ or $\omega$.

\begin{lem}\label{lem recurence big Omega}
Let $M\in \mathbf{F}_q[t]$ of degree $d\geq1$, and $\chi$ be a non-trivial Dirichlet character modulo $M$. For any positive integer $k  \geq 2$, assume that for all $1\leq \ell\leq k-1$ there exists $A_{\Omega,\ell}>0$ such that one has
$\lvert\pi_{\Omega_{\ell}}(n,\chi)\rvert \leq A_{\Omega,\ell} \frac{d^{\ell}}{(\ell -1)!} \frac{q^{n/2}}{n}(\log n)^{\ell -1}$ for all~$n\geq 1$.
Then one has
\begin{align*}
\pi_{\Omega_k}(n, \chi)=\frac{1}{k}\sum_{n_{1} + n_{2} = n}&\pi_{\Omega_{k-1}}(n_{1},\chi)\pi_{\Omega_1}(n_{2},\chi)+O_k \left(\frac{q^{n/2}(\log n)^{k-2}}{n} \right),
\end{align*}
where the implicit constant depends on $k$ and is bounded by $$\frac{d^{k}}{k!}\sum_{\ell=2}^{k} ( 2 + \frac{\ell}{\log n})A_{\Omega,k-\ell}\frac{d^{-\ell} q^{-\ell/2+1}(\log n)^{2-\ell}(k-1)!}{(k-\ell -1)!}$$ for all~$n$.
\end{lem}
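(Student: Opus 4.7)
The plan is to derive the recursion from Newton's identity (Lemma~\ref{lem-Newton}), isolate the $\ell=1$ term as the main term, and bound the $\ell\geq 2$ contributions using the inductive hypothesis together with a uniform trivial bound on $\pi_{\Omega_1}(\cdot,\chi^\ell)$. View $\chi$ as producing formal variables $x_P := \chi(P) u^{\deg P}$ indexed by the irreducible polynomials $P\nmid M$: a monic $N$ of degree $n$ coprime to $M$ with $\Omega(N)=k$ contributes $\chi(N)u^n$ to the $k$-th complete homogeneous symmetric function $h_k(\{x_P\})$, while the $\ell$-th power sum $p_\ell(\{x_P\})=\sum_P\chi(P)^\ell u^{\ell\deg P}$ has coefficient $\pi_{\Omega_1}(n/\ell,\chi^\ell)$ at $u^n$ when $\ell\mid n$ and $0$ otherwise. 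Applying Newton's identity $kh_k=\sum_{\ell=1}^{k}h_{k-\ell}p_\ell$ and extracting the coefficient of $u^n$ yields
\begin{equation*}
    k\pi_{\Omega_k}(n,\chi) = \sum_{\ell=1}^{k}\sum_{\substack{n_1+\ell m=n\\ n_1\geq 0,\,m\geq 1}}\pi_{\Omega_{k-\ell}}(n_1,\chi)\,\pi_{\Omega_1}(m,\chi^\ell),
\end{equation*}
with the convention $\pi_{\Omega_0}(n_1,\chi)=\mathbf{1}_{n_1=0}$. The $\ell=1$ summand is exactly the main term displayed in the lemma.

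For $\ell\geq 2$ I would insert the inductive hypothesis on $|\pi_{\Omega_{k-\ell}}(n-\ell m,\chi)|$ (the edge case $\ell=k$ being handled directly through $\pi_{\Omega_0}=\mathbf{1}_{n_1=0}$), together with the uniform bound $|\pi_{\Omega_1}(m,\chi^\ell)|\leq q^m/m$, which is valid whether or not $\chi^\ell$ is principal since there are at most $q^m/m$ monic irreducibles of degree $m$ in $\mathbf{F}_q[t]$. The powers of $q$ combine as $q^{(n-\ell m)/2+m}=q^{n/2-(\ell-2)m/2}$, so the whole task reduces to estimating
\begin{equation*}
    \Sigma_\ell := \sum_{m=1}^{\lfloor n/\ell\rfloor}\frac{q^{-(\ell-2)m/2}(\log(n-\ell m))^{k-\ell-1}}{m(n-\ell m)}.
\end{equation*}

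For $\ell=2$ the exponential weight is trivial; the partial-fraction identity $\frac{1}{m(n-2m)}=\frac{1}{n}\bigl(\frac{1}{m}+\frac{2}{n-2m}\bigr)$ yields $\sum_m\frac{1}{m(n-2m)}\leq \frac{2\log n + O(1)}{n}$, and using $(\log(n-2m))^{k-3}\leq (\log n)^{k-3}$ produces $\Sigma_2\leq (2+O(1/\log n))(\log n)^{k-2}/n$, which matches the $(2+2/\log n)$ factor in the statement. For $\ell\geq 3$ the geometric weight $q^{-(\ell-2)m/2}$ dominates and $\sum_{m\geq 1}q^{-(\ell-2)m/2}/m = -\log(1-q^{-(\ell-2)/2})\ll q^{-(\ell-2)/2}$, giving $\Sigma_\ell\ll q^{-(\ell-2)/2}(\log n)^{k-\ell-1}/n$, exactly the $q^{-\ell/2+1}(\log n)^{2-\ell}$ weight stated in the lemma, and strictly smaller than the $\ell=2$ contribution. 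Summing over $\ell\in\{2,\ldots,k\}$, dividing by $k$, and rewriting $\frac{1}{k(k-\ell-1)!}=\frac{(k-1)!}{k!\,(k-\ell-1)!}$ assembles the constants into the advertised closed form.

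The main obstacle is this constant-tracking: ensuring that the harmonic-type estimate for $\ell=2$, the geometric series for $\ell\geq 3$, and the factorials arising from the iterated induction combine exactly into the factor $(2+\ell/\log n)\,q^{-\ell/2+1}(\log n)^{2-\ell}(k-1)!/(k-\ell-1)!$ appearing in the statement. A secondary technical nuisance is the boundary range where $n-\ell m$ is small: for $\ell\geq 3$ it is suppressed by the geometric decay, while for $\ell=2$ trimming the sum to $m\leq (n-1)/2$ produces an $O(1/n)$ correction that is absorbed into the $\ell/\log n$ term.
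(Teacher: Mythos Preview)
Your approach is correct and essentially the same as the paper's: both derive the generating-function identity $F_{\Omega_k}(u,\chi)=\tfrac1k\sum_{\ell=1}^k F_{\Omega_{k-\ell}}(u,\chi)F_{\Omega_1}(u^\ell,\chi^\ell)$ from Newton's formula with $x_P=\chi(P)u^{\deg P}$, keep the $\ell=1$ term as the main term, and bound the $\ell\geq 2$ coefficients via the inductive hypothesis together with the trivial estimate $|\pi_{\Omega_1}(m,\chi^\ell)|\leq q^m/m$. The only organizational difference is that the paper treats all $\ell\geq 2$ in one stroke by bounding $q^{n_1/2+n_2}\leq q^{n/2-\ell/2+1}$ uniformly (since $n_2\geq 1$) and then using $\sum_{n_1+\ell n_2=n}\tfrac{1}{n_1 n_2}\leq \tfrac{2\log n+\ell}{n}$, which immediately yields the stated constant $(2+\ell/\log n)$; your case split $\ell=2$ versus $\ell\geq 3$ is slightly sharper for $\ell\geq 3$ but unnecessary, and the uniform route makes the exact constant-matching you flag as the ``main obstacle'' automatic.
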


\begin{proof}
We study the function 
$$F_{\Omega_k}(u,\chi) = \sum_{n=1}^{\infty} \sum_{\substack{N \text{ monic} \\ \deg(N) = n \\ \Omega(N)=k}} \chi(N) u^{n} = \sum_{n=1}^{\infty}\pi_{\Omega_k}(n,\chi) u^{n}.$$
Adapting the idea of \cite{Meng2017}, we choose $x_P=\chi(P)u^{\deg P}$ for each irreducible polynomial $P$ . 
Using Lemma~\ref{lem-Newton},
we obtain
\begin{equation}\label{general-k-series}
F_{\Omega_k}(u,\chi) = \frac{1}{k}\sum_{\ell=1}^{k}F_{\Omega_{k-\ell}}(u,\chi)F_{\Omega_1}(u^{\ell},\chi^{\ell}),
\end{equation}
where we use the convention $F_{\Omega_0}(u,\chi) = 1$.
Comparing the coefficients of degree $n$, we see that the first term will give the main term and the other terms contribute to the error term.
For $2\leq \ell\leq k-1 $, using the trivial bound for $\pi_{\Omega_1}$ the coefficient of degree~$n$ of~$F_{\Omega_{k-\ell}}(u,\chi)F_{\Omega_1}(u^{\ell},\chi^{\ell})$ is indeed by hypothesis: 
\begin{align}\label{general-k-error}
\sum_{n_{1} + \ell n_{2} = n}&\pi_{\Omega_{k-\ell}}(n_{1},\chi)\pi_{\Omega_1}\left(n_{2},\chi^{\ell} \right) \\
&\leq A_{\Omega,k-\ell}   \frac{d^{k-\ell}}{(k-\ell-1)!}\sum_{n_{1} + \ell n_{2} = n}\frac{q^{n_{1}/2} q^{n_{2}} }{n_{1} n_{2}} (\log n_1)^{k-\ell-1}\nonumber\\
&\leq A_{\Omega,k-\ell} \frac{d^{k-\ell}}{(k-\ell -1)!} q^{n/2-\ell/2+1}(\log n)^{k-\ell -1} \sum_{n_1+\ell n_2=n}\frac{1}{n_1 n_2}  \nonumber\\
&\leq A_{\Omega,k-\ell}\frac{d^{k-\ell} q^{n/2-\ell/2+1}(\log n)^{k-\ell -1}}{(k-\ell -1)! n}(2 \log n + \ell) \nonumber. 
\end{align}
The coefficient of degree~$n$ of~$F_{\Omega_{0}}(u,\chi)F_{\Omega_1}(u^{k},\chi^{k})$ is non-zero only when $k\mid n$, and it is bounded by $\lvert \pi_{\Omega_1}(\tfrac{n}{k},\chi^{k})\rvert \ll \frac{k q^{\frac{n}{k}}}{n} \leq 2A_{\Omega,0} \frac{q^{n/2 - k/2 + 1}}{n}$, for a good choice of $A_{\Omega,0} >0$.
Then, by \eqref{general-k-series} and \eqref{general-k-error}, summing over $2\leq \ell\leq k$ we obtain Lemma~\ref{lem recurence big Omega}.
\end{proof}

\begin{lem}\label{lem recurence small omega}
Let $M\in \mathbf{F}_q[t]$ of degree $d\geq1$, and $\chi$ be a non-trivial Dirichlet character modulo $M$. For any positive integer $k \geq 2$, assume that for all $2\leq \ell\leq k-1$ there exists $A_{\omega, \ell}>0$ such that one has
$\lvert\pi_{\omega_{\ell}}(n,\chi)\rvert \leq A_{\omega,\ell} \frac{d^{\ell}}{(\ell -1)!} \frac{q^{n/2}}{n}(\log n)^{\ell -1}$
for all $n\geq 1$.
Then one has
\begin{align*}
    \pi_{\omega_k}(n,\chi) = \frac{1}{k} \sum_{n_1 + n_2 = n}\pi_{\omega_{k-1}}(n_1,\chi)\pi_{\omega_1}(n_2,\chi) 
    + O_{k}\left(\frac{q^{n/2}(\log n)^{k-2}}{n}  \right).
\end{align*}
where the implicit constant depends on $k$ and is bounded by $$2\frac{d^{k}}{k!} \sum_{\ell = 2}^{k}\sum_{j = \ell}^{n-1}A_{\omega,k-\ell}j\binom{j-1}{\ell-1}\frac{q^{1-\frac{j}{2}}d^{1-\ell}(\log n)^{2-\ell}(k-1)!}{(k-\ell -1)!} $$ for all~$n$.
\end{lem}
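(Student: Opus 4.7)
The approach will parallel the proof of Lemma~\ref{lem recurence big Omega}, with the key change that $\omega$ counts \emph{distinct} irreducible factors, so the relevant symmetric functions are the elementary ones $e_k$ rather than the homogeneous ones $h_k$. For each irreducible $P\nmid M$ I would introduce the variable
$$y_P \;=\; \frac{\chi(P)u^{\deg P}}{1 - \chi(P)u^{\deg P}} \;=\; \sum_{a\geq 1}\chi(P)^a u^{a\deg P},$$
which packages together all positive powers of $P$. Since any monic $N$ with $\omega(N)=k$ factors uniquely as $N = P_1^{a_1}\cdots P_k^{a_k}$ with distinct irreducibles $P_i$ and exponents $a_i\geq 1$, and $\chi$ is completely multiplicative, the relevant generating function satisfies
$$F_{\omega_k}(u,\chi) \;:=\; \sum_{n\geq 1}\pi_{\omega_k}(n,\chi)\,u^n \;=\; e_k\bigl(y_P : P\nmid M\bigr).$$

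Next I would apply the second identity from Lemma~\ref{lem-Newton}, namely $k e_k = \sum_{\ell=1}^{k}(-1)^{\ell-1}e_{k-\ell}p_\ell$. Since $p_1 = \sum_P y_P = F_{\omega_1}(u,\chi)$, this yields
$$k\,F_{\omega_k}(u,\chi) \;=\; F_{\omega_{k-1}}(u,\chi)\,F_{\omega_1}(u,\chi) \;+\; \sum_{\ell=2}^{k}(-1)^{\ell-1}F_{\omega_{k-\ell}}(u,\chi)\,P_{\ell}(u,\chi),$$
where $P_\ell(u,\chi) := \sum_{P\nmid M}y_P^\ell$. Extracting the coefficient of $u^n$ from the first term on the right produces precisely the desired convolution $\sum_{n_1+n_2 = n}\pi_{\omega_{k-1}}(n_1,\chi)\pi_{\omega_1}(n_2,\chi)$, while the summands with $\ell\geq 2$ must be absorbed into the error.

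For these error terms I would expand each $y_P^\ell$ by the negative binomial series, giving
$$P_\ell(u,\chi) \;=\; \sum_{P\nmid M}\sum_{j\geq \ell}\binom{j-1}{\ell-1}\chi(P)^j u^{j\deg P},$$
so that $[u^{n_2}]P_\ell$ is a sum over divisors $j\mid n_2$ with $j\geq \ell$ of $\binom{j-1}{\ell-1}\sum_{\deg P = n_2/j}\chi(P)^j$. I would bound the inner character sum trivially using the Prime Polynomial Theorem (by $q^{n_2/j}/(n_2/j)$), multiply by the inductive hypothesis on $\pi_{\omega_{k-\ell}}(n_1,\chi)$, and convolve over $n_1+n_2 = n$. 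For fixed $j$, the sum over $\deg P \geq 1$ is dominated by the extreme term $\deg P = 1$ (so $n_2 = j$), which supplies the factor $jq^{1-j/2}$ in the stated bound after factoring out an overall $q^{n/2}/n$; summing over $\ell$ and $j$ and including the $1/k$ from Newton's identity then recovers the claimed estimate.

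The main obstacle will be tracking the combinatorial weights $j\binom{j-1}{\ell-1}$ across these nested sums. Unlike the $\Omega$ case, where $p_\ell$ collapses to a single term $\pi_{\Omega_1}(n/\ell,\chi^\ell)$, here the geometric-series expansion of $y_P^\ell$ spreads the contribution of each irreducible $P$ over all $j\geq \ell$, so one must sum not only over $\ell$ but also over these higher power indices. The saving that makes the bound work comes from the geometric decay of $q^{1-j/2}$ in $j$, which one must balance against the binomial growth of $\binom{j-1}{\ell-1}$; the remaining factor $(\log n)^{2-\ell}$ in the claimed implicit constant will similarly arise from performing the partial summation over $n_1$ while keeping enough control on the $(\log n_1)^{k-\ell-1}$ factor coming from the inductive hypothesis.
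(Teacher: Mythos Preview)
Your proposal is correct and follows essentially the same route as the paper: the paper also sets $x_P=\sum_{j\geq 1}\chi(P)^j u^{j\deg P}$, applies the Newton identity for the $e_k$ to obtain $F_{\omega_k}(u,\chi)=\tfrac{1}{k}\sum_{\ell=1}^{k}(-1)^{\ell+1}F_{\omega_{k-\ell}}(u,\chi)\tilde F(u,\chi;\ell)$ with $\tilde F(u,\chi;\ell)=\sum_{P}\sum_{j\geq\ell}\binom{j-1}{\ell-1}\chi(P)^j u^{j\deg P}$, and then bounds the $\ell\geq 2$ contributions exactly as you describe via the inductive hypothesis on $\pi_{\omega_{k-\ell}}$ and the trivial bound on $\pi_{\Omega_1}(n_2,\chi^j)$. (Note incidentally that the sign $(-1)^{\ell-1}$ you quote is the correct one, matching what the paper uses in its proof; the $(-1)^{\ell}$ in the paper's statement of Lemma~\ref{lem-Newton} is a typo.)
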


\begin{proof}
We study the function 
$$F_{\omega_k}(u,\chi) = \sum_{n=1}^{\infty} \sum_{\substack{N \text{ monic} \\ \deg(N) = n \\ \omega(N)=k}} \chi(N) u^{n} = \sum_{n=1}^{\infty}\pi_{\omega_k}(n,\chi) u^{n}.$$
Adapting the idea of \cite{Meng2017} for $x_{P} = \sum_{j\geq 1}\chi(P)^j u^{j\deg P}$, and 
using Lemma~\ref{lem-Newton},
we obtain
\begin{equation}\label{littleomega-general-k-series}
F_{\omega_k}(u,\chi) = \frac{1}{k}\sum_{\ell=1}^{k}(-1)^{\ell +1}F_{\omega_{k-\ell}}(u,\chi)\tilde{F}(u,\chi;\ell),
\end{equation}
where 
\begin{align*}
  \tilde{F}(u,\chi;\ell) = \sum\limits_{P\text{ irred.}}\( \sum\limits_{j\geq 1} \chi(P)^{j}u^{j\deg P} \)^{\ell}  
  = \sum\limits_{P\text{ irred.}}\sum\limits_{j\geq \ell}\binom{j-1}{\ell -1} \chi(P)^{j}u^{j\deg P}  .
\end{align*}
 Note that $\tilde{F}(u,\chi;1) = F_{\omega_1}(u,\chi)$, and we use the convention $F_{\omega_0}(u,\chi) = 1$.
Then we compare the coefficients of $u^n$ in \eqref{littleomega-general-k-series}, we show that the terms for $\ell \geq 2$ all contribute to the error term.
For $2\leq \ell \leq k-1$, the coefficient of degree $n$ of $F_{\omega_{k-\ell}}(u,\chi)\tilde{F}(u,\chi;\ell)$ is indeed by hypothesis: 
\begin{align*}
\sum_{j\geq \ell}\binom{j-1}{\ell -1}\sum_{n_{1} + j n_{2} = n}&\pi_{\omega_{k-\ell}}(n_{1},\chi)\pi_{\Omega_1}\left(n_{2},\chi^{j} \right) \\
&\leq   A_{\omega,k-\ell}\sum_{j = \ell}^{n-1}\binom{j-1}{\ell -1} \frac{d^{k-\ell +1}}{(k-\ell-1)!}\sum_{n_{1} + j n_{2} = n}\frac{q^{n_{1}/2} q^{n_{2}} }{n_{1} n_{2}} (\log n_1)^{k-\ell-1}\nonumber\\
&\leq A_{\omega,k-\ell} \frac{d^{k-\ell+1}(\log n)^{k-\ell-1} q^{n/2}}{(k-\ell-1)!}\sum_{j = \ell}^{n-1}\binom{j-1}{\ell -1}\sum_{n_1+ j n_2=n} \frac{q^{-(j/2-1)n_2}}{n_1 n_2} \nonumber\\
&\leq 2 A_{\omega,k-\ell}\frac{d^{k}}{(k-\ell -1)!} \frac{q^{n/2}(\log n)^{k-\ell}}{n}  \sum_{j = \ell}^{n-1} j\binom{j-1}{\ell -1} q^{1-j/2} d^{1-\ell},\nonumber
\end{align*}
The coefficient of degree $n$ of $F_{\omega_{0}}(u,\chi)\tilde{F}(u,\chi;k)$ is bounded by
$$\sum_{\substack{k\leq j \leq n-1 \\ j\mid n  }}\binom{j-1}{k-1}\frac{j q^{\frac{n}{j}}}{n} \leq 2A_{\omega,0} \frac{q^{n/2}}{n}  \sum_{j = k}^{n-1} j\binom{j-1}{k -1} q^{1-j/2},$$
for a good choice of $A_{\omega,0}>0$.
Summing over $2\leq \ell\leq k$ we obtain Lemma~\ref{lem recurence small omega}.
\end{proof}

In order to avoid some confusions with complete sum over all zeros, in the following we use $\sum'$ to represent the sum over non-real zeros of the $L$-function. We also assume all the multiplicities and zeros depend on $\chi$ in this section.  

For $f = \Omega$ or $\omega$, $\ell \geq 1$ and $\chi \bmod M$ (with the convention $0!=1$), we denote
\begin{align*}
Z_{\ell}(n,\chi)& = \frac{(-1)^{\ell}}{(\ell-1)!} \sideset{}{'}\sum_{1\leq j\leq d_{\chi}} m_j^{\ell} \frac{\alpha_j^n(\chi) (\log n)^{\ell-1}}{n},\\
B_{f_\ell}(n,\chi) &= \frac{(-1)^{\ell}}{(\ell-1)!} \( \(m_+ -\epsilon_f\frac{\delta(\chi^2)}{2}\)^{\ell} +(-1)^n \(m_- -\epsilon_f \frac{\delta(\chi^2)}{2}\)^\ell\)\frac{q^{n/2}(\log n)^{\ell-1}}{n},
\end{align*}
where $\epsilon_{\Omega} = -1$ and $\epsilon_{\omega} = 1$.
With these notations, we rewrite the formula in Theorem~\ref{Prop_k_general} and Proposition~\ref{Prop_k=1} in the following form: there exists positive constants $C_{f,\ell}$ such that
\begin{align}\label{formula Th deg=n}
E_{f_\ell}(n,\chi) := \lvert\pi_{f_\ell}(n,\chi) - Z_{\ell}(n,\chi)  - B_{f_\ell}(n,\chi)\rvert  \leq \begin{cases}   C_{f,\ell}\frac{d^{\ell}}{(\ell-1)!}\frac{1}{\gamma(\chi)}\frac{q^{n/2} (\log n)^{\ell-2}}{n}& \text{ for } \ell \geq 2\\
C_{f,1}d \frac{q^{n/3}}{n}  & \text{ for } \ell =1,
\end{cases}
\end{align}
where for $2\leq \ell =o((\log n)^{\frac12})$, and if $f = \omega$ then $q>3$, we need to show that $C_{f,\ell} \leq C\ell(\ell -1)$ with $C$ an absolute constant.
By Lemma~\ref{lem recurence big Omega} (resp. \ref{lem recurence small omega}), it suffices to study the coefficient of $u^n$ in $F_{f_{k-1}}(u,\chi)F_{f_1}(u,\chi)$, that is:
\begin{align}\label{general-k-main}
\sum_{n_{1} + n_{2} = n}&\pi_{f_{k-1}}(n_{1},\chi)\pi_{f_1}(n_{2},\chi)\nonumber \\
=& \sum_{n_{1} + n_{2} = n} \big\{ Z_{k-1}(n_{1},\chi)+B_{f_{k-1}}(n_{1},\chi)+E_{f_{k-1}}(n_{1},\chi) \big\}\big\{Z_1(n_{2},\chi)+B_{f_1}(n_{2},\chi)+E_{f_1}(n_{2},\chi)\big\}\nonumber\\
=&\sum_{n_{1} + n_{2} = n}Z_{k-1}(n_1, \chi)Z_1(n_2, \chi)+\sum_{n_{1} + n_{2} = n}B_{f_{k-1}}(n_1, \chi)B_{f_1}(n_2, \chi)\nonumber\\
&\quad + \sum_{n_{1} + n_{2} = n}\big\{ Z_{k-1}(n_1, \chi)B_{f_1}(n_2, \chi)+B_{f_{k-1}}(n_1, \chi)Z_1(n_2, \chi)\big\}\nonumber \\
&\quad +\sum_{n_{1} + n_{2} = n} \big\{ \(Z_{k-1}(n_{1},\chi)+B_{f_{k-1}}(n_{1},\chi)\) E_{f_1}(n_2, \chi) \big\}
+\sum_{n_{1} + n_{2} = n}   E_{f_{k-1}}(n_{1},\chi) \pi_{f_1}(n_2,\chi).
\end{align}
We will now study each of these sums separately.

\subsection{Bounds for certain exponential sums}

We first give a bound for certain exponential sums that appear several times in the proof of Lemmas~\ref{Lem_general_k_Zeros}--\ref{Lem-mixed-term}.
The following result follows from partial summation. 
\begin{lem}\label{Lem_Abel}
Let $f$ be a differentiable function on $[1,+\infty)$ such that $f'(x) \in L^{1}[1,\infty)$.
Then for every $\theta \in (-\frac{\pi}{2}, \frac{\pi}{2}]$, $\theta \neq 0$,
one has
\begin{align*}
\sum_{n=1}^{N} e^{i\theta n}f(n)  =  O\left(\frac{ \lVert f' \rVert_{L^{1}} + \lVert f \rVert_{\infty}}{\lvert\theta\rvert}\right)
\end{align*}
as $N \rightarrow +\infty$, with an absolute implicit constant.
\end{lem}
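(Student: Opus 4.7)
The plan is to combine Abel summation with a uniform bound on the partial sums of the geometric series $\sum_{n\leq N} e^{i\theta n}$.

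First, I would set $S_{N}(\theta) := \sum_{n=1}^{N} e^{i\theta n}$ and use the closed form
$$S_{N}(\theta) = \frac{e^{i\theta}(e^{i\theta N} - 1)}{e^{i\theta} - 1}$$
to deduce $\lvert S_{N}(\theta) \rvert \leq 2/\lvert e^{i\theta} - 1 \rvert = 1/\lvert \sin(\theta/2)\rvert$. The hypothesis $\theta \in (-\pi/2,\pi/2]$ places $\theta/2$ in the range $[-\pi/4,\pi/4]$, on which the elementary inequality $\lvert \sin x \rvert \geq 2\lvert x\rvert / \pi$ applies. This gives the uniform bound $\lvert S_{N}(\theta) \rvert \leq \pi/\lvert \theta\rvert$, independently of $N$.

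Next, I would perform Abel summation by parts to write
$$\sum_{n=1}^{N} e^{i\theta n} f(n) = S_{N}(\theta) f(N) - \sum_{n=1}^{N-1} S_{n}(\theta) \bigl( f(n+1) - f(n) \bigr).$$
The boundary term contributes at most $\pi \lVert f \rVert_{\infty} / \lvert \theta\rvert$. For the remaining sum, the fundamental theorem of calculus yields $f(n+1) - f(n) = \int_{n}^{n+1} f'(x) \diff x$, so applying the uniform bound on $S_{n}(\theta)$ and the triangle inequality produces
$$\left\lvert \sum_{n=1}^{N-1} S_{n}(\theta)\bigl(f(n+1) - f(n)\bigr) \right\rvert \leq \frac{\pi}{\lvert \theta\rvert}\int_{1}^{N} \lvert f'(x)\rvert \diff x \leq \frac{\pi}{\lvert \theta\rvert}\lVert f'\rVert_{L^{1}}.$$
Adding the two contributions gives the claimed estimate with absolute implicit constant $\pi$.

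There is no genuine obstacle here; the whole argument reduces to the geometric series estimate and Abel summation. The only point requiring a bit of care is the sharp lower bound on $\lvert \sin(\theta/2) \rvert$, which explains the restriction $\theta \in (-\pi/2,\pi/2]$ in the hypotheses, and the passage to $N \to \infty$ (if desired) is immediate from the uniformity in $N$ together with $f' \in L^{1}$.
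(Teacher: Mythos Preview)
Your proof is correct and follows essentially the same approach as the paper: bound the geometric partial sums by $O(1/\lvert\theta\rvert)$ and apply partial summation. The only cosmetic difference is that the paper invokes the integral form of Abel's identity directly (writing $\sum_{n=1}^{N} e^{i\theta n} f(n) = H(N)f(N) + \int_{1}^{N} H(t) f'(t)\,\diff t$ with $H(t)=\sum_{n\leq t} e^{i\theta n}$), whereas you use the discrete summation-by-parts formula and then recover the integral of $\lvert f'\rvert$ via the fundamental theorem of calculus; the two are equivalent.
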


\begin{proof}
As $e^{i\theta} \neq 1$, one has
\begin{align*}
H(x) := \sum_{n\leq x} e^{i\theta n} =  \frac{e^{i\theta}- e^{i\theta ([x]+1)}}{1-e^{i\theta}} = O\(\frac{1}{\lvert\theta\rvert}\).
\end{align*}
So applying Abel's identity, one has
\begin{align*}
\sum_{n=1}^{N} e^{i\theta n}f(n)  = H(N)f(N) + \int_{1}^{N} H(t)f'(t) \diff t 
= O\left(\frac{\lvert f(N)\rvert}{\lvert\theta\rvert}\right) +  O\left(\frac{1}{\lvert\theta\rvert}\int_{1}^{N} \lvert f'(t) \rvert \diff t\right).
\end{align*}
\end{proof}

\subsection{Sum over non-real zeros}
\begin{lem}\label{Lem_general_k_Zeros}
For any $k \geq 2$, one has
\begin{multline*}
   \sum_{n_{1} + n_{2} = n}Z_{k-1}(n_{1},\chi)Z_{1}(n_{2},\chi) \\
= \frac{(-1)^{k}k}{(k-1)!} \left\{ \sideset{}{'}\sum_{j=1}^{d_{\chi}} m_{j}^{k} \frac{\alpha_{j}(\chi)^{n}(\log n)^{k-1}}{n} + O\left( d^k \left( k + \frac{1}{\gamma(\chi)}\right)\frac{q^{n/2}(\log n)^{k-2}}{n}\right) \right\}, 
\end{multline*}
where the implicit constant is absolute.
\end{lem}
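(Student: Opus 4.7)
The strategy is to expand the product
\begin{equation*}
Z_{k-1}(n_1,\chi)\,Z_1(n_2,\chi) = \frac{(-1)^k}{(k-2)!}\sideset{}{'}\sum_{j,j'} m_j^{k-1}m_{j'}\,\frac{\alpha_j^{n_1}\alpha_{j'}^{n_2}(\log n_1)^{k-2}}{n_1\,n_2},
\end{equation*}
where the prime indicates that both $j$ and $j'$ range over the non-real inverse zeros of $\mathcal{L}(u,\chi)$; then sum over $n_1+n_2=n$ and split into the diagonal contribution ($j=j'$, which produces the main term) and the off-diagonal contribution ($j\neq j'$, which contributes only to the error).

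For the diagonal, the inner convolution reduces to $\alpha_j^n\,S_k(n)$ with $S_k(n):=\sum_{n_1=1}^{n-1}\frac{(\log n_1)^{k-2}}{n_1(n-n_1)}$. Partial fractions $\frac{1}{n_1(n-n_1)}=\frac{1}{n}\bigl(\frac{1}{n_1}+\frac{1}{n-n_1}\bigr)$ break $S_k(n)$ into two pieces. The first one satisfies $\sum_{n_1=1}^{n-1}\frac{(\log n_1)^{k-2}}{n_1}=\frac{(\log n)^{k-1}}{k-1}+O((\log n)^{k-2})$ by integral comparison. For the second, substituting $m=n-n_1$ and splitting at $m=n/2$, for $m\leq n/2$ the Taylor expansion $\log(n-m)=\log n+O(1)$ yields $(\log(n-m))^{k-2}=(\log n)^{k-2}+O(k(\log n)^{k-3})$ (valid since $k=o(\log n)$), so the bulk contributes $(\log n)^{k-1}+O(k(\log n)^{k-2})$; the tail $m>n/2$ contributes $O((\log n)^{k-2})$ via the trivial bound $1/m\leq 2/n$. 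Combining through $\frac{1}{k-1}+1=\frac{k}{k-1}$,
\begin{equation*}
S_k(n) = \frac{k(\log n)^{k-1}}{n(k-1)} + O\!\left(\frac{k(\log n)^{k-2}}{n}\right),
\end{equation*}
and the identity $\frac{1}{(k-2)!}\cdot\frac{k}{k-1}=\frac{k}{(k-1)!}$ delivers the advertised main term.

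For the off-diagonal pairs $j\neq j'$, write $\alpha_j=\sqrt{q}\,e^{i\gamma_j}$ and factor out $q^{n/2}e^{i\gamma_{j'}n}$, leaving oscillating inner sums $\sum_{n_1=1}^{n-1}\frac{e^{i\theta n_1}(\log n_1)^{k-2}}{n_1(n-n_1)}$ with $\theta=\gamma_j-\gamma_{j'}$ satisfying $\lvert\theta\rvert\geq\gamma(\chi)$ (after reduction modulo $2\pi$ if needed). After the same partial-fraction step I would apply Lemma~\ref{Lem_Abel} to each piece with $f_1(x)=(\log x)^{k-2}/x$ and $f_2(x)=(\log x)^{k-2}/(n-x)$. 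A direct calculus check shows that, in the range $k=o((\log n)^{1/2})$, $\lVert f_i\rVert_\infty$ and $\lVert f_i'\rVert_{L^1[1,n-1]}$ are both $O((\log n)^{k-2})$: for $f_1$, the maximum $((k-2)/e)^{k-2}$ is $\leq(\log n)^{k-2}$ in this range and $|f_1'|$ has a single sign change; $f_2$ is monotone increasing, so $\lVert f_2'\rVert_{L^1}=f_2(n-1)-f_2(1)\leq(\log n)^{k-2}$. This yields the bound $O\!\left(\frac{q^{n/2}(\log n)^{k-2}}{n\gamma(\chi)}\right)$ per off-diagonal pair.

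To conclude, I would sum over the zeros using the elementary bounds $\sum_j m_j(\chi)\leq\deg\mathcal{L}(\cdot,\chi)<d$ and $m_j(\chi)\leq d$, which give $\sum_j m_j^k\leq d^k$ and $\sum_{j\neq j'}m_j^{k-1}m_{j'}\leq d^k$. The diagonal error contributes $O\!\left(k d^k q^{n/2}(\log n)^{k-2}/((k-2)!\,n)\right)$ and the off-diagonal contributes $O\!\left(d^k q^{n/2}(\log n)^{k-2}/((k-2)!\,n\gamma(\chi))\right)$; using $\frac{1}{(k-2)!}=\frac{k-1}{(k-1)!}\leq\frac{k}{(k-1)!}$, both are absorbed into $\frac{k}{(k-1)!}\cdot d^k(k+1/\gamma(\chi))\,q^{n/2}(\log n)^{k-2}/n$, matching the claimed error after factoring the leading $\frac{(-1)^k k}{(k-1)!}$ outside. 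I expect the main obstacle to be the sharp evaluation of $\sum_{n_1=1}^{n-1}\frac{(\log n_1)^{k-2}}{n-n_1}$: a crude bound either loses a logarithmic factor or introduces spurious powers of $k$, and it is precisely the Taylor-controlled estimate $(\log n)^{k-1}+O(k(\log n)^{k-2})$ that supplies the crucial numerator $k$ in the main term via $\frac{1}{k-1}+1=\frac{k}{k-1}$.
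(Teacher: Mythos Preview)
Your proposal is correct and follows essentially the same approach as the paper: split into diagonal and off-diagonal, evaluate the diagonal convolution $S_k(n)$ via partial fractions $\tfrac{1}{n_1(n-n_1)}=\tfrac{1}{n}(\tfrac{1}{n_1}+\tfrac{1}{n-n_1})$ with the same Taylor-controlled estimate on the second piece, and treat the off-diagonal via Lemma~\ref{Lem_Abel}. The only minor deviation is in the off-diagonal second piece: the paper splits at $n/2$, pulls out $(\log n)^{k-2}$ via the expansion $\log(n-n_2)=\log n+O(1)$, and then applies Lemma~\ref{Lem_Abel} with $f(x)=1/x$, whereas you apply Lemma~\ref{Lem_Abel} directly with $f_2(x)=(\log x)^{k-2}/(n-x)$ and use its monotonicity to bound $\lVert f_2'\rVert_{L^1}$ --- your route is slightly cleaner here and avoids an extra $k(\log n)^{k-2}$ term from the Taylor step, but both land inside the stated error.
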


\begin{proof}
We separate the sum in a diagonal term and off-diagonal term:
 \begin{align*}
\sideset{}{'}\sum_{j_{1}=1}^{d_{\chi}} \sideset{}{'}\sum_{j_{2}=1}^{d_{\chi}} \sum_{n_{1}+ n_{2} = n}\frac{(-1)^k}{(k-2)!}m_{j_{1}}^{k-1}m_{j_{2}}\frac{\alpha_{j_{1}}(\chi)^{n_{1}}\alpha_{j_{2}}(\chi)^{n_{2}}(\log n_{1})^{k-2}}{n_{1}n_{2}} = \Sigma_{1} + \Sigma_{2},
\end{align*}
where
\begin{align*}
\Sigma_{1} =\frac{(-1)^k}{(k-2)!}\sideset{}{'}\sum_{j=1}^{d_{\chi}}\sum_{n_{1}+ n_{2} = n} m_{j}^{k}\frac{\alpha_{j}(\chi)^{n_{1} + n_{2}}(\log n_{1})^{k-2}}{n_{1}n_{2}}, 
\end{align*}
and 
\begin{align*}
\Sigma_{2} = \frac{(-1)^k}{(k-2)!}\sideset{}{'}\sum_{j_{1}\neq j_2} \sum_{n_{1}+ n_{2} = n}m_{j_{1}}^{k-1}m_{j_{2}}\frac{\alpha_{j_{1}}(\chi)^{n_{1}}\alpha_{j_{2}}(\chi)^{n_{2}}(\log n_{1})^{k-2}}{n_{1}n_{2}}. 
\end{align*}

The diagonal term gives the main term, for $1\leq j \leq d$ one has
\begin{align}\label{main-zero}
\sum_{n_{1}+ n_{2} = n} m_{j}^{k}\frac{\alpha_{j}(\chi)^{n_{1} + n_{2}}(\log n_{1})^{k-2}}{n_{1}n_{2}}= m_{j}^{k}\frac{\alpha_{j}(\chi)^{n}}{n} \sum_{n_{1} + n_{2} = n}\left(\frac{(\log n_{1})^{k-2}}{n_{1}} + \frac{(\log n_{1})^{k-2}}{n_{2}} \right).
\end{align}
By partial summation, we have
\begin{equation}\label{sum1-1}
\sum_{n_1+n_2=n} \frac{(\log n_{1})^{k-2}}{n_{1}}=\sum_{n_1=1}^{n-1} \frac{(\log n_{1})^{k-2}}{n_{1}}=\frac{1}{k-1} \left( (\log n)^{k-1}+O\left( k(\log n)^{k-2}\right) \right).
\end{equation}
For the second sum in \eqref{main-zero}, we have
\begin{align}\label{sum1-2-0}
\sum_{n_1+n_2=n}\frac{(\log n_{1})^{k-2}}{n_{2}}&=\sum_{1\leq n_2\leq n/2} \frac{(\log(n-n_2))^{k-2}}{n_{2}}+\sum_{n/2<n_2<n}\frac{(\log(n-n_2))^{k-2}}{n_{2}} \nonumber \\
&=\sum_{1\leq n_2\leq n/2} \frac{(\log n+\log (1-n_2/n))^{k-2}}{n_{2}}+O\(\frac{n}{2}\cdot \frac{(\log n)^{k-2}}{n} \),
\end{align}
for $1\leq n_2\leq n/2$, $|\log(1-n_2/n)|<1$, thus
\begin{align}\label{sum1-2}
\sum_{n_1+n_2=n}\frac{(\log n_{1})^{k-2}}{n_{2}}&=\sum_{1\leq n_2\leq n/2}\( \frac{(\log n)^{k-2} }{n_2}+ \frac{O(k(\log n)^{k-3})}{n_2}\)+O\((\log n)^{k-2}\) \nonumber\\
&=(\log n)^{k-1}+O\(k(\log n)^{k-2}\).
\end{align}
Inserting \eqref{sum1-1} and \eqref{sum1-2} into \eqref{main-zero},  we get
\begin{equation}\label{sum1-3}
\sum_{n_{1}+ n_{2} = n} m_{j}^{k}\frac{\alpha_{j}(\chi)^{n_{1} + n_{2}}(\log n_{1})^{k-2}}{n_{1}n_{2}}=\frac{k m_j^k}{k-1}\left( \frac{\alpha_j(\chi)^{n}(\log n)^{k-1}}{n}+O\(k\frac{q^{n/2}(\log n)^{k-2}}{n}\) \right).
\end{equation}
Thus, 
\begin{align*}
\Sigma_1=\frac{(-1)^k k}{(k-1)!}\left\{\sideprime\sum_{j=1}^{d_{\chi}} m_j^k \frac{\alpha_j(\chi)^{n}(\log n)^{k-1}}{n}+O\(d^k k\frac{q^{n/2}(\log n)^{k-2}}{n}\)\right\}.
\end{align*}

For $\alpha_{j_{1}} \neq \alpha_{j_{2}}$, one has
\begin{align}\label{sum-2-1}
\sum_{n_{1}+ n_{2} = n}\frac{\alpha_{j_{1}}(\chi)^{n_{1}}\alpha_{j_{2}}(\chi)^{n_{2}}(\log n_{1})^{k-2}}{n_{1}n_{2}} 
= &\frac{\alpha_{j_{2}}(\chi)^{n}}{n}\sum_{n_{1} =1}^{n-1}\frac{\left(\alpha_{j_{1}}(\chi)/\alpha_{j_{2}}(\chi)\right)^{n_{1}}(\log n_{1})^{k-2}}{n_{1}}\nonumber\\& + 
\frac{\alpha_{j_{1}}(\chi)^{n}}{n}\sum_{n_{2} =1}^{n-1}\frac{\left(\alpha_{j_{2}}(\chi)/\alpha_{j_{1}}(\chi)\right)^{n_{2}}(\log (n-n_{2}))^{k-2}}{n_{2}},
\end{align}
where $\lvert \alpha_{j_{1}}(\chi)/\alpha_{j_{2}}(\chi) \rvert =1$, and $\alpha_{j_{1}}(\chi)/\alpha_{j_{2}}(\chi) \neq 1$.
We apply Lemma~\ref{Lem_Abel}  with $f(x) = \frac{(\log x)^{k-2}}{x}$ to the first sum to deduce that this sum is $O\left( (\log n)^{k-2} \lvert \gamma_{j_1}  - \gamma_{j_2} \rvert^{-1} \right)$.
The second term can be separated at $\frac{n}{2}$ as in \eqref{sum1-2-0}, it yields
\begin{equation*}
\sum_{1\leq n_2\leq n/2}\(\frac{\left(\alpha_{j_{2}}(\chi)/\alpha_{j_{1}}(\chi)\right)^{n_{2}} (\log n)^{k-2} }{n_2}+ \frac{O(k(\log n)^{k-3})}{n_2}\)+O\((\log n)^{k-2}\).
\end{equation*}
Then we apply Lemma~\ref{Lem_Abel} with $f(x) = \frac{1}{x}$ to the first term above.
In the end we obtain
\begin{align*}
\Sigma_{2} = O\left( d^k \frac{k\left(k + \frac{1}{\gamma(\chi)}\right)}{(k-1)!}\frac{q^{n/2}(\log n)^{k-2}}{n}\right).
\end{align*}

The proof of Lemma~\ref{Lem_general_k_Zeros} is complete.
\end{proof}

\subsection{Bias term}
\begin{lem}\label{Lem_general_k_Bias} For $f=\Omega$ or $\omega$, and for any $k\geq 2$, we have
\begin{multline*}
    \sum_{n_{1} + n_{2} = n} B_{f_{k-1}}(n_1, \chi)B_{f_1}(n_2, \chi)
=  \frac{(-1)^k k}{(k-1)!}\frac{q^{n/2}(\log n)^{k-1}}{n}\Bigg\{ \(m_+ - \epsilon_f\frac{\delta(\chi^2)}{2}\)^k \\ +(-1)^n \(m_- -\epsilon_f\frac{\delta(\chi^2)}{2}\)^k     +O \(d^k k (\log n)^{-1}  \) \Bigg\},
\end{multline*}
where $\epsilon_{\Omega} = -1$, $\epsilon_{\omega} =1$ and the implicit constant is absolute.
\end{lem}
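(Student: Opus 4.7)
The plan is to substitute the explicit definitions of $B_{f_{k-1}}$ and $B_{f_1}$, expand the resulting product into four terms, and reduce each to sums already analysed in the proof of Lemma~\ref{Lem_general_k_Zeros}. Set $a = m_{+}(\chi) - \epsilon_f \delta(\chi^2)/2$ and $b = m_{-}(\chi) - \epsilon_f \delta(\chi^2)/2$, both bounded by $d$ in absolute value since $m_{\pm}(\chi) \leq \deg M = d$. A direct substitution gives
\begin{align*}
B_{f_{k-1}}(n_1,\chi)\, B_{f_1}(n_2,\chi) = \frac{(-1)^k}{(k-2)!}\bigl(a^{k-1}+(-1)^{n_1}b^{k-1}\bigr)\bigl(a+(-1)^{n_2}b\bigr)\frac{q^{n/2}(\log n_1)^{k-2}}{n_1 n_2}.
\end{align*}
Expanding the bracket and using $n_1+n_2 = n$, the constant ``diagonal'' terms are $a^k$ and $(-1)^n b^k$, while the ``off-diagonal'' terms carry the oscillating factors $(-1)^{n_1}$ and $(-1)^{n_2}$.

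For the diagonal contribution I would use the partial-fraction identity $\frac{1}{n_1 n_2}=\frac{1}{n}\bigl(\frac{1}{n_1}+\frac{1}{n_2}\bigr)$ and the partial summation estimates (the analogues of equations~\eqref{sum1-1} and~\eqref{sum1-2}) from the proof of Lemma~\ref{Lem_general_k_Zeros} to obtain
\begin{align*}
\sum_{n_1+n_2=n}\frac{(\log n_1)^{k-2}}{n_1 n_2} = \frac{k(\log n)^{k-1}}{(k-1)\,n} + O\!\left(\frac{k(\log n)^{k-2}}{n}\right).
\end{align*}
Multiplying by $\frac{(-1)^k}{(k-2)!}\,q^{n/2}\bigl(a^k + (-1)^n b^k\bigr)$ produces exactly the claimed main term $\frac{(-1)^k k}{(k-1)!}\bigl(a^k + (-1)^n b^k\bigr)\frac{q^{n/2}(\log n)^{k-1}}{n}$, with the second-order piece of the estimate contributing an error of the right shape.

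For the off-diagonal contribution I would invoke Lemma~\ref{Lem_Abel} with $\theta=\pi$ applied to $f(x) = (\log x)^{k-2}/x$ (and its mirror $f(n-x)$), exploiting that $f$ has a unique maximum at $x = e^{k-2}$ and decreases thereafter. This yields $\sum_{n_1=1}^{n-1}(-1)^{n_1}(\log n_1)^{k-2}/n_1 \ll (\log n)^{k-2}$ and similarly for the sum in $n_2$. Combined with the $1/n$ from partial fractions and the crude bound $\lvert a^{k-1}b\rvert + \lvert a b^{k-1}\rvert \leq 2 d^k$, the off-diagonal part contributes $O\!\bigl(d^k \, q^{n/2}(\log n)^{k-2}/((k-2)!\,n)\bigr)$, which rewrites as $\frac{(-1)^k k}{(k-1)!}\frac{q^{n/2}(\log n)^{k-1}}{n}\cdot O(d^k k/\log n)$, matching the error term inside the braces in the statement.

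The main technical point to control is the $k$-dependence when applying Lemma~\ref{Lem_Abel}: the norms $\lVert f\rVert_\infty$ and $\lVert f'\rVert_{L^1}$ on $[1,n]$ both involve implicit factors growing with $k$ (roughly $k^{k}e^{-k}$ at the peak), but these are benign in the prescribed range $k = o((\log n)^{1/2})$ and are absorbed into the relative error $d^k k/\log n = o(1)$. Once this bookkeeping is done, the diagonal and off-diagonal pieces combine to give the stated formula.
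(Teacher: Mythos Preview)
Your proposal is correct and follows essentially the same route as the paper: the paper also expands the product into four terms (labelled $S_1,\ldots,S_4$), handles the two ``diagonal'' ones via \eqref{sum1-1}--\eqref{sum1-2}, and bounds the two oscillating cross terms by the same Abel-summation argument as in \eqref{sum-2-1} with $\theta=\pi$. Your worry about the $k$-dependence of $\lVert f\rVert_\infty$ is slightly overstated---since $k=o((\log n)^{1/2})$ the peak value $((k-2)/e)^{k-2}$ is already $\leq(\log n)^{k-2}$, so the bound $O((\log n)^{k-2})$ holds directly---but this does not affect the validity of your argument.
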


\begin{proof}
We write the sum as sum of four parts, 
\begin{align*}
\sum_{n_{1} + n_{2} = n} B_{f_{k-1}}(n_1, \chi)&B_{f_1}(n_2, \chi) \nonumber \\
=\frac{(-1)^k q^{n/2}}{(k-2)!} \sum_{n_1+n_2=n}\Bigg\{&\( m_+ -\epsilon_f \frac{\delta\(\chi^2\)}{2}\)^{k-1}
\( m_+ -\epsilon_f \frac{\delta\(\chi^2\)}{2} \) \nonumber\\
&+ 
\( m_+ -\epsilon_f \frac{\delta\(\chi^2\)}{2}\)^{k-1}
(-1)^{n_2}\( m_{-} -\epsilon_f \frac{\delta\(\chi^2\)}{2}\) \nonumber\\
&+ 
(-1)^{n_1}\( m_- -\epsilon_f \frac{\delta\(\chi^2\)}{2}\)^{k-1}
\( m_{+} -\epsilon_f \frac{\delta\(\chi^2\)}{2}\) \nonumber\\
&+ 
(-1)^{n_1}\( m_- -\epsilon_f \frac{\delta\(\chi^2\)}{2}\)^{k-1}
(-1)^{n_2}\( m_{-} -\epsilon_f \frac{\delta\(\chi^2\)}{2}\)
\Bigg\} \frac{ (\log n_1)^{k-2}}{n_1 n_2}\nonumber\\
=&: \frac{(-1)^k q^{n/2}}{(k-2)!} \left\{ S_1+S_2+S_3+S_4 \right\}.
\end{align*}
First, we see that $S_1$ and $S_4$ should give the main term, and we expect $S_2$ and $S_3$ to be in the error term. 
Using \eqref{sum1-1} and \eqref{sum1-2}, we have
\begin{align*}
\sum_{n_1+n_2=n}  \frac{(\log n_1)^{k-2}}{n_1 n_2} = \frac{k}{k-1}\frac{(\log n)^{k-1}}{n}  + O\( k\frac{(\log n)^{k-2}}{n} \).
\end{align*}
Thus
\begin{align*}
S_1 &= \frac{k}{k-1}\( m_+ -\epsilon_f \frac{\delta\(\chi^2\)}{2}\)^{k}\frac{(\log n)^{k-1}}{n}  + O\( k\( m_+ -\epsilon_f \frac{1}{2}\)^{k} \frac{(\log n)^{k-2}}{n} \), \\
S_4 &= (-1)^n\frac{k}{k-1}\( m_- -\epsilon_f \frac{\delta\(\chi^2\)}{2}\)^{k}\frac{(\log n)^{k-1}}{n}  + O\( k\( m_- -\epsilon_f \frac{1}{2}\)^{k} \frac{(\log n)^{k-2}}{n} \).
\end{align*}
Similar to \eqref{sum-2-1}, we have
\begin{align*}
\sum_{n_1+n_2=n} (-1)^{n_1} \frac{(\log n_1)^{k-2}}{n_1 n_2} =  O\( \left(k + \frac{1}{\pi}\right)\frac{(\log n)^{k-2}}{n} \).
\end{align*}
Thus 
\begin{align*}
    S_2 + S_3 =  O\( d^k k\frac{(\log n)^{k-2}}{n} \)
\end{align*}
Combining $S_1$, $S_2$, $S_3$ and $S_4$ we obtain Lemma~\ref{Lem_general_k_Bias}.
\end{proof}

\subsection{Other error terms}

\begin{lem}\label{lem_general_k_mixed-bias-zero}
For $f=\Omega$ or $\omega$ and for any $k\geq 2$, one has
\begin{align*}
\sum_{n_{1} + n_{2} = n}\big\{ Z_{k-1}(n_1, \chi)B_{f_1}(n_2, \chi)+B_{f_{k-1}}(n_1, \chi)Z_1(n_2, \chi)\big\} =  O\( d^k \frac{k \left( k + \frac{1}{\gamma(\chi)}\right)}{(k-1)!}\frac{q^{n/2} (\log n)^{k-2}}{n}\),
\end{align*}
where the implicit constant is absolute.
\end{lem}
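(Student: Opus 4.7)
The plan is to expand both products, extract the factor $q^{n/2}$ from $|\alpha_j|^{n_1}\cdot q^{n_2/2}$ (or the analogous combination in the second summand), and then reduce everything to exponential sums of the type treated by Lemma~\ref{Lem_Abel}. Writing $\alpha_j(\chi) = \sqrt{q}\,e^{i\gamma_j(\chi)}$, each term of $\sum Z_{k-1}(n_1)B_{f_1}(n_2)$ contributes, up to constants of size $O(d^k)$ from the coefficients $m_j^{k-1}(m_{\pm}-\epsilon_f \delta(\chi^2)/2)$ and an overall factor $1/(k-2)!$, a sum of the shape
\[
    q^{n/2}\sum_{n_1+n_2=n}\frac{e^{i\theta n_1}(\log n_1)^{k-2}}{n_1 n_2},
\]
with $\theta\in\{\gamma_j(\chi),\; \gamma_j(\chi)-\pi\}$ (the second case arising from the $(-1)^{n_2}=(-1)^n e^{-i\pi n_1}$ factor when pairing with $m_-$). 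The symmetric computation applies to $B_{f_{k-1}}(n_1)Z_1(n_2)$, with the roles of $n_1$ and $n_2$ swapped; the analogous phases $-\gamma_j(\chi)$ and $\pi-\gamma_j(\chi)$ occur there. In every case $|\theta|\ge \gamma(\chi)$ by the very definition of $\gamma(\chi)$.

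To estimate each such exponential sum, I would split it at $n_1=n/2$. On the range $1\le n_1\le n/2$ one has $1/n_2=O(1/n)$, so the sum is bounded by $\frac{1}{n}\bigl|\sum_{n_1\le n/2} e^{i\theta n_1}(\log n_1)^{k-2}/n_1\bigr|$; applying Lemma~\ref{Lem_Abel} with $f(x)=(\log x)^{k-2}/x$ (for which both $\|f\|_\infty$ and $\|f'\|_{L^1[1,n/2]}$ are $O((\log n)^{k-2})$) gives $O((\log n)^{k-2}/(n\gamma(\chi)))$. The symmetric range $1\le n_2\le n/2$ is bounded identically, after bounding $(\log n_1)^{k-2}\le(\log n)^{k-2}$ pointwise and applying Lemma~\ref{Lem_Abel} with $f(x)=1/x$ to the sum over $n_2$.

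Combining these estimates, each individual term contributes $O\bigl(d\cdot q^{n/2}(\log n)^{k-2}/(n\gamma(\chi))\bigr)$; summing over the $d_\chi\le d$ choices of $j$ and accounting for the factor $m_j^{k-1}\le d^{k-1}$ yields the overall bound
\[
    O\!\left(\frac{d^k}{(k-2)!}\cdot\frac{1}{\gamma(\chi)}\cdot\frac{q^{n/2}(\log n)^{k-2}}{n}\right),
\]
which is absorbed into the right-hand side of Lemma~\ref{lem_general_k_mixed-bias-zero} since $(k-1)/(k-1)!=1/(k-2)!$ and $k-1\le k(k+1/\gamma(\chi))$. The main technical point to handle carefully is bookkeeping the various phases that arise when the sign $(-1)^{n_j}$ from $B_{f_\ell}$ is combined with $e^{i\gamma_j n_j}$ from $Z_\ell$: one must verify in each of the four resulting cases that the new phase is bounded away from $0 \bmod 2\pi$ by $\gamma(\chi)$, which is exactly how $\gamma(\chi)$ was defined to include $|\gamma_j|$ and $|\pi-\gamma_j|$ in its minimum.
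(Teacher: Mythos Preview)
Your overall strategy matches the paper's: extract $q^{n/2}$, recognise the remaining sum as an exponential sum with phase $\theta\in\{\gamma_j,\ \gamma_j\pm\pi\}$ bounded away from $0\bmod 2\pi$ by $\gamma(\chi)$, and invoke Lemma~\ref{Lem_Abel}. The identification of the phases and the role of $\gamma(\chi)$ are exactly right.

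There is, however, a genuine gap in your splitting step. When you write that on $n_1\le n/2$ one has $1/n_2=O(1/n)$ and therefore ``the sum is bounded by $\tfrac{1}{n}\bigl|\sum_{n_1\le n/2} e^{i\theta n_1}(\log n_1)^{k-2}/n_1\bigr|$'', you are pulling the \emph{varying} weight $1/n_2=1/(n-n_1)$ outside an absolute value and replacing it by a constant. That is not legitimate: the cancellation in an oscillating sum can be destroyed by a non-constant weight, so one cannot conclude $|\sum a_{n_1}c_{n_1}|\le (\sup|c_{n_1}|)\,|\sum a_{n_1}|$. The same issue recurs in your treatment of the complementary range $n_2\le n/2$, where you bound $(\log n_1)^{k-2}/n_1\le 2(\log n)^{k-2}/n$ pointwise before applying Lemma~\ref{Lem_Abel}. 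A second Abel summation exploiting the monotonicity of $n_1\mapsto 1/(n-n_1)$ would repair both places.

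The paper avoids this altogether by using the exact partial-fractions identity
\[
\frac{1}{n_1n_2}=\frac{1}{n}\Bigl(\frac{1}{n_1}+\frac{1}{n_2}\Bigr)\qquad(n_1+n_2=n),
\]
which factors out $1/n$ exactly and leaves two sums $\sum_{n_1}e^{i\theta n_1}(\log n_1)^{k-2}/n_1$ and $\sum_{n_2}e^{-i\theta n_2}(\log(n-n_2))^{k-2}/n_2$. The first is handled directly by Lemma~\ref{Lem_Abel} with $f(x)=(\log x)^{k-2}/x$; for the second one splits at $n_2=n/2$, expands $(\log(n-n_2))^{k-2}=(\log n)^{k-2}+O\bigl(k(\log n)^{k-3}\bigr)$ on $n_2\le n/2$, and applies Lemma~\ref{Lem_Abel} with $f(x)=1/x$. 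This last expansion is what produces the additive $k$ alongside $1/\gamma(\chi)$ in the stated bound.
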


\begin{proof}
Let $\alpha_{j}$ be a non-real inverse zero of the $L$-function, one has
\begin{multline}\label{sum_zero_bias_1-2}
\sum_{n_1+n_2=n}m_j^{k-1} \(\(m_{+} -\epsilon_f \frac{\delta(\chi^2)}{2}\) +(-1)^{n_2}\(m_{-} -\epsilon_f \frac{\delta(\chi^2)}{2}\)\)\frac{\alpha_{j}^{n_{1}}(\log n_1)^{k-2}}{n_{1}} \frac{q^{n_{2}/2}}{n_{2}} \\
=  O \( m_j^{k-1} ( \max\(m_{+},m_{-}\) + \tfrac{1}{2}) \left( k  + \frac{1}{\min(\lvert \gamma_j\rvert, \lvert\pi - \gamma_j\rvert) }\right)\frac{q^{n/2} (\log n)^{k-2}}{n}\),
\end{multline}
this follows from the same idea as for \eqref{sum-2-1}. 
We sum \eqref{sum_zero_bias_1-2} over the zeros to obtain
\begin{align*}
\sum_{n_{1} + n_{2} = n} Z_{k-1}(n_1, \chi)B_{f_1}(n_2, \chi) 
&=  O\( d^k \frac{k \left( k + \frac{1}{\gamma(\chi)}\right)}{(k-1)!}\frac{q^{n/2} (\log n)^{k-2}}{n}\).
\end{align*}
The proof is similar for the other term.
\end{proof}

\begin{lem}\label{Lem-mixed-term}
For $f = \Omega$ or $\omega$ and for any $k\geq 2$,	one has
	\begin{align*}
	\sum_{n_{1} + n_{2} = n} & \(Z_{k-1}(n_{1},\chi)+B_{f_{k-1}}(n_{1},\chi)\) E_{f_1}(n_2, \chi)   = O\( d^k \frac{k}{(k-1)!} \frac{q^{n/2} (\log n)^{k-2}}{n} \).
	\end{align*}
\end{lem}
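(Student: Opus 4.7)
The plan is a direct estimation using the trivial size bounds for $Z_{k-1}(n_1,\chi)$ and $B_{f_{k-1}}(n_1,\chi)$ combined with the base-case error bound for $E_{f_1}(n_2,\chi)$. The whole argument rests on the key identity $q^{n_1/2}\cdot q^{n_2/3} = q^{n/2}\cdot q^{-n_2/6}$, which produces an exponential decay factor that kills the convolution up to a bounded constant.

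First I would establish crude but uniform size bounds for both ingredients of the first factor. Since $\mathcal{L}(u,\chi)$ has degree at most $d-1$, we have $m_{\pm}(\chi)\leq d-1$ and $\sum_{j=1}^{d_\chi}m_j(\chi)\leq d-1$, hence $\sum_j m_j(\chi)^{k-1}\leq d^{k-1}$. Inserting these estimates into the definitions of $Z_{k-1}$ and $B_{f_{k-1}}$ yields
\begin{equation*}
|Z_{k-1}(n_1,\chi)|+|B_{f_{k-1}}(n_1,\chi)|\ \ll\ \frac{d^{k-1}}{(k-2)!}\,\frac{q^{n_1/2}(\log n_1)^{k-2}}{n_1}.
\end{equation*}
For the second factor, the base case $\ell=1$ of \eqref{formula Th deg=n} supplies $|E_{f_1}(n_2,\chi)|\leq C_{f,1}\,d\,q^{n_2/3}/n_2$.

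Second, substituting these bounds into the convolution and extracting the factor $q^{n/2}$ reduces the problem to estimating
\begin{equation*}
\frac{d^k}{(k-2)!}\,q^{n/2}\sum_{n_1+n_2=n}\frac{(\log n_1)^{k-2}}{n_1\,n_2}\,q^{-n_2/6}.
\end{equation*}
Splitting the inner sum at $n_2=n/2$ handles the logarithmic weight cleanly. In the range $n_2\leq n/2$ we have $n_1\geq n/2$, so $(\log n_1)^{k-2}/n_1\leq 2(\log n)^{k-2}/n$, and the remaining factor $\sum_{n_2\geq 1}q^{-n_2/6}/n_2$ is a convergent constant. In the range $n_2>n/2$ the factor $q^{-n_2/6}$ is at most $q^{-n/12}$, so this tail is exponentially small and absorbed trivially into the error.

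Combining the two ranges produces a bound of $O\bigl(d^k q^{n/2}(\log n)^{k-2}/((k-2)!\,n)\bigr)$, and rewriting $1/(k-2)!=(k-1)/(k-1)!\leq k/(k-1)!$ gives exactly the bound stated in the lemma. There is no real obstacle here: the argument is routine once one notices that the disparity between the $q^{n_1/2}$ growth of the main term and the $q^{n_2/3}$ growth of the base-case error term produces a geometric convergence factor. The only mild care needed is in the splitting that handles $(\log n_1)^{k-2}$, and in tracking the polynomial-in-$k$ factorial denominators so that $(k-2)!$ matches the claimed $k/(k-1)!$.
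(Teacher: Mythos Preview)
Your proposal is correct and follows essentially the same approach as the paper's proof: bound $|Z_{k-1}+B_{f_{k-1}}|$ by $d^{k-1}(k-2)!^{-1}q^{n_1/2}(\log n_1)^{k-2}/n_1$, use the base-case bound $E_{f_1}(n_2,\chi)\ll d\,q^{n_2/3}/n_2$, rewrite $q^{n_1/2}q^{n_2/3}=q^{n/2}q^{-n_2/6}$, and split the convolution at $n_2=n/2$ so the geometric decay in $n_2$ leaves only an $O(1)$ factor. The paper drops the $1/n_2$ from $E_{f_1}$ before splitting (which is harmless), but otherwise the two arguments coincide.
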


\begin{proof}
We use the following bound, for $k-1 \geq 1$:  
 \begin{align*}
\lvert Z_{k-1}(n,\chi) + B_{f_{k-1}}(n,\chi)\rvert  \leq  d^{k-1} \frac{1}{(k-2)!} \frac{q^{n/2}(\log n)^{k-2}}{n},
\end{align*}
where the implicit constant is absolute.
In particular the term evaluated in Lemma~\ref{Lem-mixed-term} satisfies
\begin{align*}
\sum_{n_{1} + n_{2} = n}& \(Z_{k-1}(n_{1},\chi)+B_{f_{k-1}}(n_{1},\chi)\) E_{f_1}(n_2, \chi) \\
&= \sum_{n_{1} + n_{2} = n}  O\( d^{k} \frac{1}{(k-2)!} \frac{q^{n_1/2}q^{n_2/3}(\log n_1)^{k-2}}{n_1}\)  \\
&= O \(  d^{k} \frac{q^{n/2}(\log n)^{k-2}}{(k-2)!} \( \frac{1}{n}\sum_{n_{2} \leq n/2} q^{-{n_2}/6}  + q^{-n/6}\sum_{n_{1} \leq n/2}\frac{q^{n_{1}/6}}{n_1} \) \) \\
&= O \(\frac{d^k}{(k-2)!} \frac{q^{\frac{n}{2}}(\log n)^{k-2}}{n} \),
\end{align*}
with an absolute implicit constant.
This concludes the proof.
\end{proof}

\subsection{Proof of Theorem~\ref{Prop_k_general}}

We now have all the ingredients to finish the proof of Theorem~\ref{Prop_k_general}.

\begin{proof}[Proof of Theorem~\ref{Prop_k_general}]
By induction on $k$, the base case is Proposition~\ref{Prop_k=1} ($k=1$). Now suppose for any $2\leq \ell\leq k-1$, we have
\begin{align}\label{Eq bound Error}
E_{f_\ell}(n,\chi)  \leq C_{f,\ell} \frac{d^{\ell}}{(\ell-1)!}\frac{1}{\gamma(\chi)}\frac{q^{n/2} (\log n)^{\ell-2}}{n},
\end{align}
where $C_{f,\ell}\leq C\ell(\ell-1)$ as stated in \eqref{formula Th deg=n}.
In particular, the condition of Lemma~\ref{lem recurence big Omega} (resp. Lemma~\ref{lem recurence small omega}) is satisfied for $k$, one has
\begin{align}
    \lvert \pi_{f_\ell}(n,\chi) \rvert &\leq \lvert Z_{\ell}(n,\chi) + B_{f_{\ell}}(n,\chi)\rvert + C_{f,\ell} \frac{d^{\ell}}{(\ell-1)!}\frac{1}{\gamma(\chi)}\frac{q^{n/2} (\log n)^{\ell-2}}{n}\nonumber \\
    &\leq \Big(1 + \frac{C_{f,\ell}}{\gamma(\chi)\log n}\Big) \frac{d^{\ell}}{(\ell -1)!} \frac{q^{n/2}}{n}(\log n)^{\ell -1},
    \label{Eq bound on A Omega}
\end{align}
for $1\leq \ell \leq k-1$, for all $n\geq 2$.
Thus, take $A_{\Omega, \ell}=1 + \frac{C_{\Omega,\ell}}{\gamma(\chi)\log n}$ in Lemma~\ref{lem recurence big Omega}, and evaluate each sum in Equation~\eqref{general-k-main}  thanks to Lemmas~\ref{Lem_general_k_Zeros}--\ref{Lem-mixed-term}, this yields
\begin{align}\label{Eq induction Omega}
   \frac{n}{q^{n/2}(\log n)^{k-2}}\lvert E_{\Omega_{k}}(n,\chi) \rvert \leq& \frac{d^{k}}{k!}\sum_{\ell=2}^{k} \Big(1 + \frac{C_{\Omega,k-\ell}}{\gamma(\chi)\log n}\Big)\Big(2 + \frac{\ell}{\log n}\Big)\frac{d^{-\ell} q^{-\ell/2+1}(\log n)^{2-\ell}(k-1)!}{(k-\ell -1)!} \nonumber\\ &+ \frac{C_0}{2} d^k \frac{k + \frac{1}{\gamma(\chi)}}{(k-1)!}  + \frac{1}{k} \frac{n}{q^{n/2}(\log n)^{k-2}}\sum_{n_1 + n_2 = n}\lvert E_{\Omega_{k-1}}(n_1,\chi) \pi_{f_1}(n_2,\chi) \rvert,
\end{align}
where $C_0$ is an absolute constant. %and we may assume $C_0 \geq 6\pi$.

In the case $k=2$, we get
\begin{align*}
   \frac{n}{q^{n/2}}\lvert E_{\Omega_{2}}(n,\chi) \rvert \ll  \frac{d^2}{\gamma(\chi)}  + \frac{n}{q^{n/2}} \sum_{n_1 + n_2 = n} d\frac{q^{n_1 /3}}{n_1} d\frac{q^{n_2/2}}{n_2} 
   \ll  \frac{d^2}{\gamma(\chi)}
\end{align*}
which is the expected bound.
For $k\geq 3$, using the bound~\eqref{Eq bound Error}, we have
\begin{align}    \label{Eq applying induction Omega}
    \sum_{n_1 + n_2 = n}&\lvert E_{\Omega_{k-1}}(n_1,\chi) \pi_{f_1}(n_2,\chi) \rvert\\ &\leq \sum_{n_1 + n_2 = n}C_{\Omega,k-1}\frac{d^{k-1}}{(k-2)!}\frac{1}{\gamma(\chi)}\frac{q^{n_1/2}(\log n_1)^{k-3}}{n_1} \Big(1 + C_{\Omega,1}q^{-n_2/6}\Big) d \frac{q^{n_2/2}}{n_2}\nonumber \\
    &\leq C_{\Omega,k-1}\frac{d^k}{(k-2)!}\frac{1}{\gamma(\chi)}\frac{q^{n/2}}{n}\sum_{n_1 + n_2 = n} \left(\frac1{n_1} + \frac{1}{n_2}\right)(\log n_1)^{k-3}\Big(1 + C_{\Omega,1}q^{-n_2/6}\Big)\nonumber  \\
    &\leq C_{\Omega,k-1}\frac{d^k}{(k-2)!}\frac{1}{\gamma(\chi)}\frac{q^{n/2}}{n}\left( \frac{k-1}{k-2}(\log n)^{k-2} + O(k(\log n)^{k-3} ) \right),\nonumber
\end{align}
which together with the bound~\eqref{Eq induction Omega}, proves the existence of $C_{\Omega,k}$ satisfying
\begin{align*}
E_{\Omega_k}(n,\chi)  \leq C_{\Omega,k} \frac{d^{k}}{(k-1)!}\frac{1}{\gamma(\chi)}\frac{q^{n/2} (\log n)^{k-2}}{n}.
\end{align*}
Now, when $k = o((\log n)^{1/2})$, by the induction hypothesis~\eqref{Eq bound Error}, one has $C_{\Omega,\ell} \leq  C\ell(\ell -1) =  o(\log n)$ for $2\leq \ell\leq k-1$ and some absolute constant $C$. In the following, we show how to choose $C$ and close the induction. We simplify the bounds~\eqref{Eq induction Omega} and \eqref{Eq applying induction Omega} to obtain
\begin{align*}
    C_{\Omega,k} \leq& \frac{2(k-1)(k-2)}{k}\sum_{\ell=2}^{k} (\gamma(\chi) + o(1))d^{-\ell} q^{-\ell/2+1} + \frac{C_0}{2}k + C_{\Omega,k-1}\frac{k-1}{k}\left( \frac{k-1}{k-2} + o(k^{-1}) \right) \\
    \leq& C\left(\frac{(k-1)(k-2)}{2k} + \frac{k}{2}  + \frac{(k-1)^3}{k} + o(k) \right)  \leq C k(k-1),
\end{align*}
if we choose $C\geq \max \lbrace C_0, 6\pi \rbrace \geq 4\gamma(\chi)\sum_{\ell=2}^{k} d^{-\ell} q^{-\ell/2+1}$ and for $k$ large enough (say $k\geq K$ finite). In the end, choose $C\geq \max\{ \frac{C_{\Omega, 2}}{2}, \cdots, \frac{C_{\Omega, K}}{K(K-1)}, C_0,6\pi \}$, we deduce that $C_{\Omega, k}\leq Ck(k-1)$ for all $2\leq k=o((\log n)^{1/2})$. This closes the induction step for $C_{\Omega,k}$.

The proof works similarly for $C_{\omega,k}$ using Lemma~\ref{lem recurence small omega}. For $k\geq 2$, we have
\begin{align}\label{Eq induction omega}
   \frac{n}{q^{n/2}(\log n)^{k-2}}\lvert E_{\omega_{k}}(n,\chi) \rvert \leq& 2\frac{d^{k}}{k!} \sum_{\ell = 2}^{k}\sum_{j = \ell}^{n-1}\left(1 + \frac{C_{\omega,k-\ell}}{\gamma(\chi)\log n}\right)j\binom{j-1}{\ell-1}\frac{q^{1-\frac{j}{2}}d^{1-\ell}(\log n)^{2-\ell}(k-1)!}{(k-\ell -1)!}\nonumber
   \\ &+ \frac{C_0}{2} d^k \frac{k + \frac{1}{\gamma(\chi)}}{(k-1)!}  + \frac{1}{k} \frac{n}{q^{n/2}(\log n)^{k-2}}\sum_{n_1 + n_2 = n}\lvert E_{\omega_{k-1}}(n_1,\chi) \pi_{f_1}(n_2,\chi) \rvert.
\end{align}
The last term is handled as in~\eqref{Eq applying induction Omega}. The first term is bounded independently of $n$ (but a priori not independently of $q$ if $q = 3$) by observing that the series
$$ \sum_{j \geq \ell} \frac{j!}{(j-\ell)!} q^{-\frac{j}{2}}   = \frac{\sqrt{q}}{\sqrt{q}-1} \ell ! (\sqrt{q} - 1)^{-\ell}$$
is convergent.
Up to increasing the constant to include the case $q=3$, this proves the existence of $C_{\omega,k}$ satisfying 
\begin{align*}
E_{\omega_k}(n,\chi)  \leq C_{\omega,k} \frac{d^{k}}{(k-1)!}\frac{1}{\gamma(\chi)}\frac{q^{n/2} (\log n)^{k-2}}{n}.
\end{align*}
Now, assuming $q \geq 5$, one has
\begin{align*}
  \sum_{\ell = 2}^{k}\sum_{j = \ell}^{n-1}j\binom{j-1}{\ell-1}q^{1-\frac{j}{2}}d^{1-\ell}  &\leq 2dq\sum_{\ell = 2}^{k}\ell (d(\sqrt{q} -1))^{-\ell}. 
\end{align*}
The series is convergent and can be bounded independently of $q$ and $d$, we may choose $C \geq \max\{ C_0, 8\gamma(\chi)dq\sum_{\ell \geq 2}\ell (d(\sqrt{q} -1))^{-\ell}\}$.
Thus, for $q\geq 5$, $k = o((\log n)^{1/2})$, using the induction hypothesis $C_{\omega,\ell} \leq C\ell(\ell -1)$ for $2\leq \ell\leq k-1$,  \eqref{Eq induction omega} becomes
\begin{align*}
   C_{\omega,k}\leq  C\left(\frac{(k-1)(k-2)}{2k} + \frac{k}{2}  + \frac{(k-1)^3}{k} + o(k) \right). \end{align*}
By the same argument as $C_{\Omega, k}$, we conclude that $C_{\omega, k}\leq C k(k-1)$ for some absolute constant $C$.
\end{proof}

\section{Counting polynomials of degree $\leq n$ with $k$ irreducible factors in congruence classes}\label{Sec_proof_deg<X}

The asymptotic formula in Theorem~\ref{Th_Difference_k_general_deg<X}  is obtained as a corollary of Theorem~\ref{Prop_k_general}, by summing over the characters and over the degree of the polynomials.

For $A \subset (\mathbf{F}_q[t]/(M))^{*}$, for $f = \Omega$ or $\omega$ and for any integers $n,k \geq 1$,
we define the function
\begin{equation*}\pi_{f_k}(n; M, A) = \lvert\lbrace N \in\mathbf{F}_{q}[t] : N \text{ monic, } \deg{N} = n,~ \Omega(N)=k,~ N \bmod M \in A  \rbrace\rvert,\end{equation*}
so that
\begin{align*}
\Delta_{f_k}(X; M, A, B)
 = \frac{X (k-1)!}{q^{X/2}(\log X)^{k-1}}\sum_{n\leq X}\left( \frac{1}{\lvert A\rvert }\pi_{f_k}(n; M, A) - \frac{1}{\lvert B \rvert}\pi_{f_k}(n; M, B)\right).
\end{align*}

Before we give the proof of Theorem \ref{Th_Difference_k_general_deg<X}, let us prove the following preliminary lemma. 
\begin{lem}\label{Lem_SumOverN}
Let $k\geq 0$ be an integer.
For any complex number $\alpha$ with $\lvert\alpha\rvert \geq \sqrt{2}$, as $X\rightarrow\infty$ we have that
\begin{align*}
\frac{X}{\alpha^{X}(\log X)^{k}}\sum_{n=1}^{X}\frac{\alpha^n(\log n)^{k}}{n}  = \frac{\alpha}{\alpha -1} + O\( \frac{1}{\lvert\alpha\rvert^{X}}+\frac{1 + \frac{k}{\log X}}{X\log X} \).
\end{align*}
\end{lem}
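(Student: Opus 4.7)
The strategy is to exploit the dominance of the large-$n$ terms when $|\alpha|\geq \sqrt 2 >1$. Setting $g(x) := (\log x)^k/x$ and reversing the summation index via $m = X-n$, I would first rewrite
\[
S := \sum_{n=1}^{X}\frac{\alpha^n (\log n)^k}{n} = \alpha^X \sum_{m=0}^{X-1}\alpha^{-m}\, g(X-m),
\]
and then decompose $g(X-m) = g(X) + (g(X-m) - g(X))$ to separate a geometric main term from a small correction term.

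The geometric piece $\alpha^X g(X)\sum_{m=0}^{X-1}\alpha^{-m} = \alpha^X g(X) \cdot \frac{\alpha(1-\alpha^{-X})}{\alpha - 1}$ produces the main term. Since $|\alpha|\geq \sqrt 2$ forces $|\alpha - 1| \geq \sqrt 2 - 1 > 0$, the tail of the geometric series is harmless, and after multiplying by the normalization $X/(\alpha^X(\log X)^k)$ this piece yields exactly $\frac{\alpha}{\alpha - 1} + O(|\alpha|^{-X})$, which accounts for the first error term in the statement.

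The correction $\alpha^X \sum_{m=0}^{X-1}\alpha^{-m}(g(X-m) - g(X))$ is handled by splitting at $m = X/2$. On $0 \leq m \leq X/2$, the mean value theorem together with the derivative estimate $|g'(x)| \leq (\log x)^k(1 + k/\log x)/x^2$ (obtained from $g'(x) = (\log x)^{k-1}(k - \log x)/x^2$) gives $|g(X-m) - g(X)| \ll m(\log X)^k(1 + k/\log X)/X^2$ uniformly for $m$ in this range; convergence of $\sum_{m\geq 0} m|\alpha|^{-m}$ then controls the contribution, which after normalization takes the shape of the second claimed error term. On $X/2 < m \leq X-1$, the factor $|\alpha|^{-m} \leq |\alpha|^{-X/2}$ provides super-polynomial decay in $X$, so even crude pointwise bounds on $g(X-m)$ and $g(X)$ suffice to absorb this contribution into the other error terms.

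I expect the main obstacle to be conceptual rather than technical: a direct Abel summation using the partial sums of $\alpha^n$ extracts the main term much less efficiently and produces a markedly weaker error estimate, because it fails to use the dominance of the last terms. The reverse-summation decomposition is what allows the error to be captured by the smallness of $g(X-m) - g(X)$ (a Taylor-type deviation), and retaining the cancellation visible in $g'(x) = (\log x)^{k-1}(k - \log x)/x^2$ rather than bounding $|g'|$ more crudely is what produces the $(1 + k/\log X)$ factor in the final error.
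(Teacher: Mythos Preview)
Your argument is correct but takes a different route from the paper. The paper applies Abel summation directly: with $A(t)=\sum_{n\le t}\alpha^n$ and $f(t)=(\log t)^k/t$ one has $S=A(X)f(X)-\int_1^X A(t)f'(t)\,dt$; the boundary term supplies the main contribution, and the integral is bounded via $|f'(t)|\le (\log t)^{k-1}(k+\log t)/t^2$ together with Cha's estimate $\int_1^X |\alpha|^t t^{-2}\,dt=O(|\alpha|^X/X^2)$. Your reverse-summation decomposition is equally valid and essentially dual to this: the mean-value bound on $g(X-m)-g(X)$ plays exactly the role of the bound on $\int A(t)f'(t)\,dt$, and both routes deliver the same error term. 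Consequently your closing remark --- that Abel summation ``extracts the main term much less efficiently and produces a markedly weaker error estimate'' --- is unfounded; Abel summation is precisely the paper's method.

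One further observation: carrying your bound through gives, after normalization, an error $O\bigl((1+k/\log X)/X\bigr)$ rather than the $O\bigl((1+k/\log X)/(X\log X)\bigr)$ asserted in the lemma. The paper's own computation, once the exponents in the derivative of $(\log t)^k/t$ are written correctly, yields the same $O\bigl((1+k/\log X)/X\bigr)$; the extra $\log X$ in the denominator of the stated error appears to be a slip. This weaker bound is still ample for the application in the proof of the main asymptotic formula.
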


\begin{proof}
The proof is adapted from \cite[Lem.~2.2]{Cha2008}.
Applying Abel identity yields
\begin{align*}
\sum_{n=1}^{X} \frac{\alpha^n(\log n)^{k}}{n}  &= \frac{\alpha^{X+1} -{\alpha}}{\alpha -1}\frac{(\log X)^k}{X} + \int_{1}^{X} \frac{\alpha^{[t]+1} -{\alpha}}{\alpha -1} \frac{(k-1)(\log t)^{k-2} - (\log t)^{k-1}}{t^2} \diff t \\
&= \frac{\alpha}{\alpha -1}(\alpha^{X} +O(1)) \frac{(\log X)^k}{X} 
+  O\( \(k(\log X)^{k-2} + (\log X)^{k-1}\)\int_{1}^{X} \frac{ \lvert \alpha\rvert^{t}}{t^2} \diff t \).
\end{align*}
Cha proved that $\int_{1}^{X} \frac{ \lvert \alpha\rvert^{t}}{t^2} \diff t = O\( \frac{\lvert \alpha\rvert^X}{X^2}\)$ via integration by parts. This concludes the proof.
\end{proof}

\begin{proof}[Proof of Theorem~\ref{Th_Difference_k_general_deg<X}]
Let us first sum over the characters.
By orthogonality of characters, for every $A \subset (\mathbf{F}_{q}[t]/(M))^*$, one has
\begin{equation*}\pi_{f_k}(n; M, A) = 
\frac{1}{\phi(M)}\sum_{\chi \bmod M}\sum_{a\in A} \bar\chi(a) \pi_{f_k}(n,\chi).
\end{equation*}
Hence for any $A, B \subset (\mathbf{F}_{q}[t]/(M))^*$, one has
\begin{align*}
    \frac{1}{\lvert A\rvert}\pi_{f_k}(n; M, A) - \frac{1}{\lvert B\rvert}\pi_{f_k}(n; M, B)
    &=  \frac{1}{\phi(M)}\sum_{\chi \bmod M}\(\frac{1}{\lvert A\rvert}\sum_{a\in A} \bar\chi(a) - \frac{1}{\lvert B\rvert}\sum_{b\in B} \bar\chi(b) \) \pi_{f_k}(n,\chi) \\
    &= \sum_{\chi \bmod M}c(\chi,A,B)  \pi_{f_k}(n,\chi). 
\end{align*}
Note that  the case $\chi = \chi_0$ is trivial, one has $c(\chi,A,B) =0$. 

We have $\vert c(\chi,A,B) \rvert \leq 2$, so when we sum over the degree $n$, the implicit constants in the error terms are at most multiplied by $2$.

Now, let us sum over the degree. We divide the range $n\leq X$ into the two parts $n\leq \frac{X}{3}$ and $\frac{X}{3}<n\leq X$. For $n\leq \frac{X}{3}$, we use the trivial bound $\pi_{f_k}(n; M, A)\leq q^{n}$. We have 
\begin{align*}
\Delta_{f_k}(X; M, A, B)&= \sum_{\chi \bmod M}c(\chi,A,B)  \frac{X (k-1)!}{q^{X/2}(\log X)^{k-1}}\Bigg\{\sum_{n\leq \frac{X}{3}}+\sum_{\frac{X}{3}<n\leq X}\Bigg\}   \pi_{f_k}(n,\chi)\nonumber\\
&= \sum_{\chi \bmod M}c(\chi,A,B)\frac{X (k-1)!}{q^{X/2}(\log X)^{k-1}}\sum_{\frac{X}{3}<n\leq X} \pi_{f_k}(n,\chi) +O\(Xq^{1 - \frac{X}{6}}\frac{(k-1)!}{(\log X)^{k-1}}\).
\end{align*}
When $\frac{X}{3}<n\leq X$, we have $k=o((\log X)^{\frac12})=o((\log n)^{\frac12})$, the asymptotic formula in Theorem~\ref{Prop_k_general} yields
\begin{align*}
&\sum_{\frac{X}{3}<n\leq X}  \pi_{f_k}(n,\chi)\nonumber\\
&= \frac{(-1)^k}{(k-1)!}  \sum_{\frac{X}{3}<n\leq X} \Bigg\{\frac{q^{n/2}(\log n)^{k-1}}{n}
 \( \(m_+(\chi) -\epsilon_f\frac{\delta(\chi^2)}{2}\)^k+(-1)^n \(m_-(\chi) -\epsilon_f\frac{\delta(\chi^2)}{2}\)^k \) \\
 &\quad + \sum_{\alpha_j\neq\pm\sqrt{q}} m_j(\chi)^k \frac{\alpha_j^n(\chi) (\log n)^{k-1}}{n}
+O \left( d^k k(k-1)\frac{1}{\gamma(\chi)} \frac{q^{n/2}(\log n)^{k-2}}{n}  + d\frac{q^{n/3}}{n}\right) \Bigg\}\\
&= \frac{(-1)^k}{(k-1)!}  \sum_{n\leq X} \Bigg\{\frac{q^{n/2}(\log n)^{k-1}}{n}
 \( \(m_+(\chi) -\epsilon_f\frac{\delta(\chi^2)}{2}\)^k+(-1)^n \(m_-(\chi) -\epsilon_f\frac{\delta(\chi^2)}{2}\)^k \) \\
 &\quad + \sum_{\alpha_j\neq\pm\sqrt{q}} m_j(\chi)^k \frac{\alpha_j^n(\chi) (\log n)^{k-1}}{n}
+O \left( d^k \frac{k(k-1)}{(k-1)!\gamma(\chi)} \frac{q^{n/2}(\log n)^{k-2}}{n} + d\frac{q^{n/3}}{n}  \right) \Bigg\} \\
&\qquad+  O \left( d^k \frac{1}{(k-1)!} \frac{q^{X/6}(\log X)^{k-1}}{X}  \right).
\end{align*}
Now, applying Lemma~\ref{Lem_SumOverN} for each $\alpha_j = \sqrt{q}e^{i\gamma_j(\chi)}$ (real or not), and using $k= o(\log X)$,
    one has
    \begin{align*}
\frac{X}{(\log X)^{k-1}q^{X/2}}\sum_{n=1}^{X}\frac{\alpha_j^n(\log n)^{k-1}}{n}  = \frac{\alpha_j}{\alpha_j -1}\(\frac{\alpha_j}{\sqrt{q}}\)^{X} + O\( \frac{1}{X\log X}\).
\end{align*}
We also apply Lemma~\ref{Lem_SumOverN} and \cite[Lem. 2.2]{Cha2008} to the sum of the error term and derive that
\begin{align*}
&\Delta_{f_k}(X; M, A, B)\nonumber\\
&= (-1)^k \sum_{\chi}c(\chi,A,B)\Bigg(\ \(m_+(\chi) +\frac{\delta(\chi^2)}{2}\)^k \frac{\sqrt{q}}{\sqrt{q}-1} +(-1)^X \(m_-(\chi)+\frac{\delta(\chi^2)}{2}\)^k \frac{\sqrt{q}}{\sqrt{q}+1} \nonumber\\
&\qquad +\sideset{}{'}\sum_{j=1}^{d_{\chi}} m_j^k e^{iX\gamma_{j}(\chi)} \frac{\alpha_j(\chi)}{\alpha_j(\chi)-1} \Bigg)  + O\( \frac{d^k k(k-1)}{\gamma(M)\log X} + dq^{-X/6}\).
\end{align*}
This concludes the proof of Theorem~\ref{Th_Difference_k_general_deg<X}.
\end{proof}

\section*{Acknowledgements}
The authors thank Peter Humphries and Lior Bary-Soroker for suggesting the project.
 The authors are grateful to Daniel Fiorilli for his feedback and careful reading. The authors are also grateful to Byungchul Cha and Andrew Granville for their helpful advice and for pointing out the works of Wanlin Li and Sam Porritt respectively. We are grateful for Wanlin Li's explanation of her paper and her help in finding interesting examples, and to Sam Porritt for sending us his preprint.
 This paper also benefited from conversations with Florent Jouve, Jon Keating, Corentin Perret-Gentil, and K. Soundararajan.
 We would like to thank the CRM, McGill University, Concordia University, the University of Ottawa and MPIM for providing good working conditions that made this collaboration possible. 

The computations in this paper were performed using SageMath and Matlab.

\bibliographystyle{amsalpha} 
\bibliography{biblio}

\end{document}